\begin{document}
\newcommand {\emptycomment}[1]{} 

\baselineskip=14pt
\newcommand{\nc}{\newcommand}
\newcommand{\delete}[1]{}
\nc{\mfootnote}[1]{\footnote{#1}} 
\nc{\todo}[1]{\tred{To do:} #1}

\nc{\mlabel}[1]{\label{#1}}  
\nc{\mcite}[1]{\cite{#1}}  
\nc{\mref}[1]{\ref{#1}}  
\nc{\mbibitem}[1]{\bibitem{#1}} 

\delete{
\nc{\mlabel}[1]{\label{#1}  
{\hfill \hspace{1cm}{\bf{{\ }\hfill(#1)}}}}
\nc{\mcite}[1]{\cite{#1}{{\bf{{\ }(#1)}}}}  
\nc{\mref}[1]{\ref{#1}{{\bf{{\ }(#1)}}}}  
\nc{\mbibitem}[1]{\bibitem[\bf #1]{#1}} 
}

\newtheorem{thm}{Theorem}[section]
\newtheorem{lem}[thm]{Lemma}
\newtheorem{cor}[thm]{Corollary}
\newtheorem{pro}[thm]{Proposition}
\newtheorem{ex}[thm]{Example}
\newtheorem{rmk}[thm]{Remark}
\newtheorem{defi}[thm]{Definition}
\newtheorem{pdef}[thm]{Proposition-Definition}
\newtheorem{condition}[thm]{Condition}

\renewcommand{\labelenumi}{{\rm(\alph{enumi})}}
\renewcommand{\theenumi}{\alph{enumi}}

\nc{\tred}[1]{\textcolor{red}{#1}}
\nc{\tblue}[1]{\textcolor{blue}{#1}}
\nc{\tgreen}[1]{\textcolor{green}{#1}}
\nc{\tpurple}[1]{\textcolor{purple}{#1}}
\nc{\btred}[1]{\textcolor{red}{\bf #1}}
\nc{\btblue}[1]{\textcolor{blue}{\bf #1}}
\nc{\btgreen}[1]{\textcolor{green}{\bf #1}}
\nc{\btpurple}[1]{\textcolor{purple}{\bf #1}}

\nc{\cm}[1]{\textcolor{red}{Chengming:#1}}
\nc{\yy}[1]{\textcolor{blue}{Yanyong: #1}}
\nc{\lit}[2]{\textcolor{blue}{#1}{}} 
\nc{\yh}[1]{\textcolor{green}{Yunhe: #1}}


\nc{\twovec}[2]{\left(\begin{array}{c} #1 \\ #2\end{array} \right )}
\nc{\threevec}[3]{\left(\begin{array}{c} #1 \\ #2 \\ #3 \end{array}\right )}
\nc{\twomatrix}[4]{\left(\begin{array}{cc} #1 & #2\\ #3 & #4 \end{array} \right)}
\nc{\threematrix}[9]{{\left(\begin{matrix} #1 & #2 & #3\\ #4 & #5 & #6 \\ #7 & #8 & #9 \end{matrix} \right)}}
\nc{\twodet}[4]{\left|\begin{array}{cc} #1 & #2\\ #3 & #4 \end{array} \right|}

\nc{\rk}{\mathrm{r}}
\newcommand{\g}{\mathfrak g}
\newcommand{\h}{\mathfrak h}
\newcommand{\pf}{\noindent{$Proof$.}\ }
\newcommand{\frkg}{\mathfrak g}
\newcommand{\frkh}{\mathfrak h}
\newcommand{\Id}{\rm{Id}}
\newcommand{\gl}{\mathfrak {gl}}
\newcommand{\ad}{\mathrm{ad}}
\newcommand{\add}{\frka\frkd}
\newcommand{\frka}{\mathfrak a}
\newcommand{\frkb}{\mathfrak b}
\newcommand{\frkc}{\mathfrak c}
\newcommand{\frkd}{\mathfrak d}
\newcommand {\comment}[1]{{\marginpar{*}\scriptsize\textbf{Comments:} #1}}

\nc{\gensp}{V} 
\nc{\relsp}{\Lambda} 
\nc{\leafsp}{X}    
\nc{\treesp}{\overline{\calt}} 

\nc{\vin}{{\mathrm Vin}}    
\nc{\lin}{{\mathrm Lin}}    

\nc{\gop}{{\,\omega\,}}     
\nc{\gopb}{{\,\nu\,}}
\nc{\svec}[2]{{\tiny\left(\begin{matrix}#1\\
#2\end{matrix}\right)\,}}  
\nc{\ssvec}[2]{{\tiny\left(\begin{matrix}#1\\
#2\end{matrix}\right)\,}} 

\nc{\typeI}{local cocycle $3$-Lie bialgebra\xspace}
\nc{\typeIs}{local cocycle $3$-Lie bialgebras\xspace}
\nc{\typeII}{double construction $3$-Lie bialgebra\xspace}
\nc{\typeIIs}{double construction $3$-Lie bialgebras\xspace}

\nc{\bia}{{$\mathcal{P}$-bimodule ${\bf k}$-algebra}\xspace}
\nc{\bias}{{$\mathcal{P}$-bimodule ${\bf k}$-algebras}\xspace}

\nc{\rmi}{{\mathrm{I}}}
\nc{\rmii}{{\mathrm{II}}}
\nc{\rmiii}{{\mathrm{III}}}
\nc{\pr}{{\mathrm{pr}}}
\newcommand{\huaA}{\mathcal{A}}

\nc{\pll}{\beta}
\nc{\plc}{\epsilon}

\nc{\ass}{{\mathit{Ass}}}
\nc{\lie}{{\mathit{Lie}}}
\nc{\comm}{{\mathit{Comm}}}
\nc{\dend}{{\mathit{Dend}}}
\nc{\zinb}{{\mathit{Zinb}}}
\nc{\tdend}{{\mathit{TDend}}}
\nc{\prelie}{{\mathit{preLie}}}
\nc{\postlie}{{\mathit{PostLie}}}
\nc{\quado}{{\mathit{Quad}}}
\nc{\octo}{{\mathit{Octo}}}
\nc{\ldend}{{\mathit{ldend}}}
\nc{\lquad}{{\mathit{LQuad}}}

 \nc{\adec}{\check{;}} \nc{\aop}{\alpha}
\nc{\dftimes}{\widetilde{\otimes}} \nc{\dfl}{\succ} \nc{\dfr}{\prec}
\nc{\dfc}{\circ} \nc{\dfb}{\bullet} \nc{\dft}{\star}
\nc{\dfcf}{{\mathbf k}} \nc{\apr}{\ast} \nc{\spr}{\cdot}
\nc{\twopr}{\circ} \nc{\tspr}{\star} \nc{\sempr}{\ast}
\nc{\disp}[1]{\displaystyle{#1}}
\nc{\bin}[2]{ (_{\stackrel{\scs{#1}}{\scs{#2}}})}  
\nc{\binc}[2]{ \left (\!\! \begin{array}{c} \scs{#1}\\
    \scs{#2} \end{array}\!\! \right )}  
\nc{\bincc}[2]{  \left ( {\scs{#1} \atop
    \vspace{-.5cm}\scs{#2}} \right )}  
\nc{\sarray}[2]{\begin{array}{c}#1 \vspace{.1cm}\\ \hline
    \vspace{-.35cm} \\ #2 \end{array}}
\nc{\bs}{\bar{S}} \nc{\dcup}{\stackrel{\bullet}{\cup}}
\nc{\dbigcup}{\stackrel{\bullet}{\bigcup}} \nc{\etree}{\big |}
\nc{\la}{\longrightarrow} \nc{\fe}{\'{e}} \nc{\rar}{\rightarrow}
\nc{\dar}{\downarrow} \nc{\dap}[1]{\downarrow
\rlap{$\scriptstyle{#1}$}} \nc{\uap}[1]{\uparrow
\rlap{$\scriptstyle{#1}$}} \nc{\defeq}{\stackrel{\rm def}{=}}
\nc{\dis}[1]{\displaystyle{#1}} \nc{\dotcup}{\,
\displaystyle{\bigcup^\bullet}\ } \nc{\sdotcup}{\tiny{
\displaystyle{\bigcup^\bullet}\ }} \nc{\hcm}{\ \hat{,}\ }
\nc{\hcirc}{\hat{\circ}} \nc{\hts}{\hat{\shpr}}
\nc{\lts}{\stackrel{\leftarrow}{\shpr}}
\nc{\rts}{\stackrel{\rightarrow}{\shpr}} \nc{\lleft}{[}
\nc{\lright}{]} \nc{\uni}[1]{\tilde{#1}} \nc{\wor}[1]{\check{#1}}
\nc{\free}[1]{\bar{#1}} \nc{\den}[1]{\check{#1}} \nc{\lrpa}{\wr}
\nc{\curlyl}{\left \{ \begin{array}{c} {} \\ {} \end{array}
    \right .  \!\!\!\!\!\!\!}
\nc{\curlyr}{ \!\!\!\!\!\!\!
    \left . \begin{array}{c} {} \\ {} \end{array}
    \right \} }
\nc{\leaf}{\ell}       
\nc{\longmid}{\left | \begin{array}{c} {} \\ {} \end{array}
    \right . \!\!\!\!\!\!\!}
\nc{\ot}{\otimes} \nc{\sot}{{\scriptstyle{\ot}}}
\nc{\otm}{\overline{\ot}}
\nc{\ora}[1]{\stackrel{#1}{\rar}}
\nc{\ola}[1]{\stackrel{#1}{\la}}
\nc{\pltree}{\calt^\pl}
\nc{\epltree}{\calt^{\pl,\NC}}
\nc{\rbpltree}{\calt^r}
\nc{\scs}[1]{\scriptstyle{#1}} \nc{\mrm}[1]{{\rm #1}}
\nc{\dirlim}{\displaystyle{\lim_{\longrightarrow}}\,}
\nc{\invlim}{\displaystyle{\lim_{\longleftarrow}}\,}
\nc{\mvp}{\vspace{0.5cm}} \nc{\svp}{\vspace{2cm}}
\nc{\vp}{\vspace{8cm}} \nc{\proofbegin}{\noindent{\bf Proof: }}
\nc{\proofend}{$\blacksquare$ \vspace{0.5cm}}
\nc{\freerbpl}{{F^{\mathrm RBPL}}}
\nc{\sha}{{\mbox{\cyr X}}}  
\nc{\ncsha}{{\mbox{\cyr X}^{\mathrm NC}}} \nc{\ncshao}{{\mbox{\cyr
X}^{\mathrm NC,\,0}}}
\nc{\shpr}{\diamond}    
\nc{\shprm}{\overline{\diamond}}    
\nc{\shpro}{\diamond^0}    
\nc{\shprr}{\diamond^r}     
\nc{\shpra}{\overline{\diamond}^r}
\nc{\shpru}{\check{\diamond}} \nc{\catpr}{\diamond_l}
\nc{\rcatpr}{\diamond_r} \nc{\lapr}{\diamond_a}
\nc{\sqcupm}{\ot}
\nc{\lepr}{\diamond_e} \nc{\vep}{\varepsilon} \nc{\labs}{\mid\!}
\nc{\rabs}{\!\mid} \nc{\hsha}{\widehat{\sha}}
\nc{\lsha}{\stackrel{\leftarrow}{\sha}}
\nc{\rsha}{\stackrel{\rightarrow}{\sha}} \nc{\lc}{\lfloor}
\nc{\rc}{\rfloor}
\nc{\tpr}{\sqcup}
\nc{\nctpr}{\vee}
\nc{\plpr}{\star}
\nc{\rbplpr}{\bar{\plpr}}
\nc{\sqmon}[1]{\langle #1\rangle}
\nc{\forest}{\calf}
\nc{\altx}{\Lambda_X} \nc{\vecT}{\vec{T}} \nc{\onetree}{\bullet}
\nc{\Ao}{\check{A}}
\nc{\seta}{\underline{\Ao}}
\nc{\deltaa}{\overline{\delta}}
\nc{\trho}{\tilde{\rho}}

\nc{\rpr}{\circ}
\nc{\dpr}{{\tiny\diamond}}
\nc{\rprpm}{{\rpr}}

\nc{\mmbox}[1]{\mbox{\ #1\ }} \nc{\ann}{\mrm{ann}}
\nc{\Aut}{\mrm{Aut}} \nc{\can}{\mrm{can}}
\nc{\twoalg}{{two-sided algebra}\xspace}
\nc{\colim}{\mrm{colim}}
\nc{\Cont}{\mrm{Cont}} \nc{\rchar}{\mrm{char}}
\nc{\cok}{\mrm{coker}} \nc{\dtf}{{R-{\rm tf}}} \nc{\dtor}{{R-{\rm
tor}}}
\renewcommand{\det}{\mrm{det}}
\nc{\depth}{{\mrm d}}
\nc{\Div}{{\mrm Div}} \nc{\End}{\mrm{End}} \nc{\Ext}{\mrm{Ext}}
\nc{\Fil}{\mrm{Fil}} \nc{\Frob}{\mrm{Frob}} \nc{\Gal}{\mrm{Gal}}
\nc{\GL}{\mrm{GL}} \nc{\Hom}{\mrm{Hom}} \nc{\hsr}{\mrm{H}}
\nc{\hpol}{\mrm{HP}} \nc{\id}{\mrm{id}} \nc{\im}{\mrm{im}}
\nc{\incl}{\mrm{incl}} \nc{\length}{\mrm{length}}
\nc{\LR}{\mrm{LR}} \nc{\mchar}{\rm char} \nc{\NC}{\mrm{NC}}
\nc{\mpart}{\mrm{part}} \nc{\pl}{\mrm{PL}}
\nc{\ql}{{\QQ_\ell}} \nc{\qp}{{\QQ_p}}
\nc{\rank}{\mrm{rank}} \nc{\rba}{\rm{RBA }} \nc{\rbas}{\rm{RBAs }}
\nc{\rbpl}{\mrm{RBPL}}
\nc{\rbw}{\rm{RBW }} \nc{\rbws}{\rm{RBWs }} \nc{\rcot}{\mrm{cot}}
\nc{\rest}{\rm{controlled}\xspace}
\nc{\rdef}{\mrm{def}} \nc{\rdiv}{{\rm div}} \nc{\rtf}{{\rm tf}}
\nc{\rtor}{{\rm tor}} \nc{\res}{\mrm{res}} \nc{\SL}{\mrm{SL}}
\nc{\Spec}{\mrm{Spec}} \nc{\tor}{\mrm{tor}} \nc{\Tr}{\mrm{Tr}}
\nc{\mtr}{\mrm{sk}}

\nc{\ab}{\mathbf{Ab}} \nc{\Alg}{\mathbf{Alg}}
\nc{\Algo}{\mathbf{Alg}^0} \nc{\Bax}{\mathbf{Bax}}
\nc{\Baxo}{\mathbf{Bax}^0} \nc{\RB}{\mathbf{RB}}
\nc{\RBo}{\mathbf{RB}^0} \nc{\BRB}{\mathbf{RB}}
\nc{\Dend}{\mathbf{DD}} \nc{\bfk}{{\bf k}} \nc{\bfone}{{\bf 1}}
\nc{\base}[1]{{a_{#1}}} \nc{\detail}{\marginpar{\bf More detail}
    \noindent{\bf Need more detail!}
    \svp}
\nc{\Diff}{\mathbf{Diff}} \nc{\gap}{\marginpar{\bf
Incomplete}\noindent{\bf Incomplete!!}
    \svp}
\nc{\FMod}{\mathbf{FMod}} \nc{\mset}{\mathbf{MSet}}
\nc{\rb}{\mathrm{RB}} \nc{\Int}{\mathbf{Int}}
\nc{\Mon}{\mathbf{Mon}}
\nc{\remarks}{\noindent{\bf Remarks: }}
\nc{\OS}{\mathbf{OS}} 
\nc{\Rep}{\mathbf{Rep}}
\nc{\Rings}{\mathbf{Rings}} \nc{\Sets}{\mathbf{Sets}}
\nc{\DT}{\mathbf{DT}}

\nc{\BA}{{\mathbb A}} \nc{\CC}{{\mathbb C}} \nc{\DD}{{\mathbb D}}
\nc{\EE}{{\mathbb E}} \nc{\FF}{{\mathbb F}} \nc{\GG}{{\mathbb G}}
\nc{\HH}{{\mathbb H}} \nc{\LL}{{\mathbb L}} \nc{\NN}{{\mathbb N}}
\nc{\QQ}{{\mathbb Q}} \nc{\RR}{{\mathbb R}} \nc{\BS}{{\mathbb{S}}} \nc{\TT}{{\mathbb T}}
\nc{\VV}{{\mathbb V}} \nc{\ZZ}{{\mathbb Z}}


\nc{\calao}{{\mathcal A}} \nc{\cala}{{\mathcal A}}
\nc{\calc}{{\mathcal C}} \nc{\cald}{{\mathcal D}}
\nc{\cale}{{\mathcal E}} \nc{\calf}{{\mathcal F}}
\nc{\calfr}{{{\mathcal F}^{\,r}}} \nc{\calfo}{{\mathcal F}^0}
\nc{\calfro}{{\mathcal F}^{\,r,0}} \nc{\oF}{\overline{F}}
\nc{\calg}{{\mathcal G}} \nc{\calh}{{\mathcal H}}
\nc{\cali}{{\mathcal I}} \nc{\calj}{{\mathcal J}}
\nc{\call}{{\mathcal L}} \nc{\calm}{{\mathcal M}}
\nc{\caln}{{\mathcal N}} \nc{\calo}{{\mathcal O}}
\nc{\calp}{{\mathcal P}} \nc{\calq}{{\mathcal Q}} \nc{\calr}{{\mathcal R}}
\nc{\calt}{{\mathcal T}} \nc{\caltr}{{\mathcal T}^{\,r}}
\nc{\calu}{{\mathcal U}} \nc{\calv}{{\mathcal V}}
\nc{\calw}{{\mathcal W}} \nc{\calx}{{\mathcal X}}
\nc{\CA}{\mathcal{A}}

\nc{\fraka}{{\mathfrak a}} \nc{\frakB}{{\mathfrak B}}
\nc{\frakb}{{\mathfrak b}} \nc{\frakd}{{\mathfrak d}}
\nc{\oD}{\overline{D}}
\nc{\frakF}{{\mathfrak F}} \nc{\frakg}{{\mathfrak g}}
\nc{\frakm}{{\mathfrak m}} \nc{\frakM}{{\mathfrak M}}
\nc{\frakMo}{{\mathfrak M}^0} \nc{\frakp}{{\mathfrak p}}
\nc{\frakS}{{\mathfrak S}} \nc{\frakSo}{{\mathfrak S}^0}
\nc{\fraks}{{\mathfrak s}} \nc{\os}{\overline{\fraks}}
\nc{\frakT}{{\mathfrak T}}
\nc{\oT}{\overline{T}}
\nc{\frakX}{{\mathfrak X}} \nc{\frakXo}{{\mathfrak X}^0}
\nc{\frakx}{{\mathbf x}}
\nc{\frakTx}{\frakT}      
\nc{\frakTa}{\frakT^a}        
\nc{\frakTxo}{\frakTx^0}   
\nc{\caltao}{\calt^{a,0}}   
\nc{\ox}{\overline{\frakx}} \nc{\fraky}{{\mathfrak y}}
\nc{\frakz}{{\mathfrak z}} \nc{\oX}{\overline{X}}

\font\cyr=wncyr10

\nc{\redtext}[1]{\textcolor{red}{#1}}


\title{Conformal classical Yang-Baxter equation,
$S$-equation and $\mathcal{O}$-operators}

\author{Yanyong Hong}
\address{College of Science, Zhejiang Agriculture and Forestry University,
Hangzhou, 311300, P.R.China}
\email{hongyanyong2008@yahoo.com}

\author{Chengming Bai}
\address{Chern Institute of Mathematics \& LPMC, Nankai University, Tianjin 300071, PR China}
\email{baicm@nankai.edu.cn}

\subjclass[2010]{17A30,  17B62, 17B65, 17B69}
\keywords{Lie conformal algebra, left-symmetric conformal algebra, conformal CYBE, conformal $S$-equation, $\mathcal{O}$-operator, Rota-Baxter operator}

\begin{abstract}
Conformal classical Yang-Baxter equation and $S$-equation naturally appear in the study of
Lie conformal bialgebras and left-symmetric conformal bialgebras. In this paper, they
are interpreted in terms of a kind of operators, namely, $\mathcal O$-operators in the conformal sense.
Explicitly,  the skew-symmetric part of
 a conformal linear map $T$ where $T_0=T_\lambda\mid_{\lambda=0}$ is
an  $\mathcal O$-operator in the conformal sense
is a skew-symmetric solution of conformal classical Yang-Baxter equation, whereas
the symmetric part is a symmetric solution of conformal $S$-equation.  One byproduct is that a finite left-symmetric conformal algebra which is a
free $\mathbb{C}[\partial]$-module gives a natural
$\mathcal O$-operator and hence there is a construction of solutions of conformal classical Yang-Baxter equation and
conformal $S$-equation from the former.
Another byproduct is that the non-degenerate solutions of these two equations  correspond to 2-cocycles of Lie conformal algebras and left-symmetric conformal
algebras respectively.  We also give a further study on a special class of $\mathcal{O}$-operators called Rota-Baxter operators on Lie conformal algebras and some explicit
examples are presented.

\end{abstract}

\maketitle

\section{Introduction}

The notion of Lie conformal algebra, formulated by Kac in \cite{K1,K2}, gives an
axiomatic description of the operator product expansion (or rather
its Fourier transform) of chiral fields in conformal field theory.
It appears as an useful tool to study vertex algebras (\cite{K1}) and has many applications in the theory of infinite-dimensional Lie algebras satisfying the locality property in \cite{K} and Hamiltonian formalism in the theory of nonlinear evolution equations (see \cite{Do} and the references therein, and also \cite{BDK, GD, Z, X4}). The
structure theory (\cite{DK1}), representation theory (\cite{CK1, CK2})
and cohomology theory (\cite{BKV}) of finite Lie conformal algebras
have been well developed. On the other hand, a vertex algebra is a ``combination" of a Lie conformal algebra
and another algebraic structure, namely a left-symmetric algebra, satisfying certain compatible conditions (\cite{BK}).  Moreover, for studying whether there exist compatible left-symmetric algebra structures on formal distribution Lie algebras, the definition of left-symmetric conformal algebra was introduced in \cite{HL}, which can be used to construct vertex algebras.

Motivated by the study of Lie bialgebras (\cite{Dr}), a theory of Lie conformal bialgebra was established in \cite{L}. The notion of a finite Lie conformal bialgebra which is free as a $\mathbb{C}[\partial]$-module was introduced to be equivalent to a conformal Manin triple associated to a non-degenerate symmetric invariant conformal bilinear form. The notion of conformal classical Yang-Baxter equation was also introduced to construct (coboundary) Lie conformal bialgebras and hence
as a byproduct, the conformal Drinfeld's double was constructed. Explicitly, let $R$ be a Lie conformal algebra and $r=\sum_i a_i\otimes b_i\in R\otimes R$.
Set $\partial^{\otimes^3}=\partial\otimes 1\otimes 1
+1\otimes \partial\otimes 1+1\otimes 1\otimes \partial$. The equation
\begin{eqnarray}
[[r,r]]:&=&\sum\limits_{i,j}([{a_i}_\mu a_j]\otimes b_i\otimes b_j|_{\mu=1\otimes \partial \otimes 1}-a_i\otimes [{a_j}_\mu b_i]\otimes b_j|_{\mu=1\otimes 1\otimes \partial}
-a_i\otimes a_j\otimes [{b_j}_\mu b_i]|_{\mu=1\otimes \partial \otimes 1})\nonumber\\
&=&0\;\;\text{mod}\; (\partial^{\otimes^3}) ~~~~~~~\text{in}\; R\otimes R\otimes R,
\end{eqnarray}
is called {\bf conformal classical Yang-Baxter equation (conformal CYBE)} in $R$.
Then the (skew-symmetric) solutions of conformal CYBE can be used to construct Lie conformal bialgebras and many interesting examples beyond the classical Lie bialgebras
due to this construction were given in \cite{L}. Therefore how to find solutions of
conformal CYBE becomes an important problem.

As both an analogue of the above Lie conformal bialgebra in the context of left-symmetric conformal algebra and
a conformal analogue of left-symmetric bialgebra given in \cite{Bai1}, the notion of
a finite left-symmetric conformal bialgebra which is
free as a $\mathbb{C}[\partial]$-module  was introduced in \cite{HL1}. It is equivalent to a parak$\ddot{\text{a}}$hler Lie conformal algebra
which is an analogue of the above Manin triple for Lie conformal algebras, but associated to a non-degenerate skew-symmetric conformal bilinear form satisfying the so-called ``2-cocycle" condition.
There is also the corresponding analogue of the classical Yang-Baxter equation, namely conformal $S$-equation. Explicitly, let $A$ be a left-symmetric conformal algebra and $r=\sum_{i}r_i\otimes l_i\in A\otimes A$.
Then
\begin{gather}
\{\{r,r\}\}:=\sum_{i,j}({l_j}_\mu r_i\otimes r_j\otimes l_i)|_{\mu=1\otimes \partial\otimes 1}-\sum_{i,j}(r_j\otimes {l_j}_\mu r_i\otimes l_i)|_{\mu=\partial\otimes 1\otimes1}\nonumber\\
-\sum_{i,j}(r_i\otimes r_j\otimes [{l_i}_\mu l_j])|_{\mu=\partial\otimes 1\otimes 1}=0 ~~~\text{mod}(\partial^{\otimes^3})\;\;{\rm in}\;\;A\otimes A\otimes A,
\end{gather}
is called {\bf conformal $S$-equation} in $A$.  The (symmetric) solutions of conformal $S$-equation can be used to construct left-symmetric conformal bialgebras.
Thus it is also natural and important to find solutions of conformal $S$-equation.

On the other hand, in the classical case, for the CYBE and $S$-equation in a Lie algebra and a left-symmetric algebra respectively, an important idea to find solutions is to replace the tensor form by an operator form (\cite{S, Ku, Bai, Bai1}). It is Semonov-Tian-Shansky who gave the first operator form of CYBE in a Lie algebra $\mathfrak g$ as a linear transformation on $\mathfrak g$ satisfying the condition that is known as a Rota-Baxter operator in the context of Lie algebra now (\cite{S}). The notion of $\mathcal O$-operator was introduced by Kupershmidt in \cite{Ku} replacing the linear transformation by a linear map, which is a natural generalization of CYBE. Both of them are equivalent to the tensor form of CYBE under certain conditions. Moreover, a systematic study involving  $\mathcal O$-operators and CYBE was given in \cite{Bai}, where an equivalence between the operator forms and tensor form of CYBE was obtained in a more general extent. There are two direct consequences.
\begin{enumerate}
\item There is a relationship between the CYBE and $S$-equation in terms of $\mathcal O$-operator, that is,  the skew-symmetric part of an
$\mathcal{O}$-operator gives a skew-symmetric solution of CYBE (\cite{Bai}), whereas a symmetric part of an
$\mathcal{O}$-operator gives a symmetric solution of $S$-equation (\cite{Bai1}). Thus the solutions of both CYBE and $S$-equation can be obtained from the construction of $\mathcal O$-operators.
\item A left-symmetric algebra gives a natural $\mathcal O$-operator and hence there are solutions of CYBE and $S$-equation obtained from left-symmetric algebras.
\end{enumerate}
In a summary, the operator forms ($\mathcal O$-operators) and the algebraic structures (left-symmetric algebras) behind indeed provide a practical construction for solutions of CYBE and $S$-equation.

Therefore it is natural to consider the ``conformal analogues" of the above construction, which is the main aim of this paper, that is, we study  what are the operator forms of conformal CYBE and $S$-equation. We would like to point out that the conformal generalization is not trivial. For example,
we define the conformal analogue of an $\mathcal O$-operator to be a conformal linear map $T$ where $T_0=T_\lambda\mid_{\lambda=0}$ is
an $\mathcal{O}$-operator in the conformal sense and in particular, assume $T_0\ne 0$, whereas the case $T_0=0$ is meaningless for the construction
of Lie conformal bialgebras and left-symmetric conformal bialgebras. Furthermore, many results in the conformal sense are obtained and hence these results will be useful to provide solutions of conformal CYBE and $S$-equation, to construct conformal Manin triples and parak$\ddot{\text{a}}$hler Lie conformal algebras and then to study some
related geometry in the conformal sense.

This paper is organized as follows. In Section 2, we recall some necessary definitions, notations and some results about Lie conformal algebras and left-symmetric conformal algebras. In Section 3, the operator forms of conformal CYBE are investigated. We introduce the definitions of Rota-Baxter operator and $\mathcal{O}$-operator of a Lie conformal algebra and obtain some relations between Rota-Baxter operator, $\mathcal{O}$-operator and
conformal CYBE. Moreover,  a relation between the non-degenerate skew-symmetric solutions of conformal CYBE and 2-cocycles of Lie conformal algebras is presented in terms of $\mathcal{O}$-operators. In Section 4, we study the relations between conformal CYBE and left-symmetric conformal algebras. Section 5 is devoted to investigating the operator forms of conformal $S$-equation. Moreover,
similar results as those in the case of conformal CYBE are obtained. In Section 6, Rota-Baxter operators on a class of Lie conformal algebras named quadratic Lie conformal algebras are studied.

Throughout this paper, denote by $\mathbb{C}$ the field of complex
numbers; $\mathbb{N}$ the set of natural numbers, i.e.
$\mathbb{N}=\{0, 1, 2,\cdots\}$; $\mathbb{Z}$ the set of integer
numbers. All tensors over $\mathbb{C}$ are denoted by $\otimes$.
Moreover, if $A$ is a vector space, the space of polynomials of $\lambda$ with coefficients in $A$ is denoted by $A[\lambda]$.

\section{Preliminaries on  conformal algebras}
In this section, we recall some definitions, notations and results about conformal algebras. Most of the results in this section can be found in \cite{K1,HL}.

\begin{defi}\label{def1}{\rm

A {\bf conformal algebra} $R$ is a $\mathbb{C}[\partial]$-module
endowed with a $\mathbb{C}$-bilinear map $R\times R\rightarrow
 R[\lambda]$ denoted by $a\times b\rightarrow
a_{\lambda} b$ satisfying
\begin{eqnarray}
\partial a_{\lambda}b=-\lambda a_{\lambda}b,  \quad
a_{\lambda}\partial b=(\partial+\lambda)a_{\lambda}b.\end{eqnarray}

A {\bf Lie conformal algebra} $R$ is a conformal algebra with the $\mathbb{C}$-bilinear
map $[\cdot_\lambda \cdot]: R\times R\rightarrow  R[\lambda]$ satisfying
\begin{eqnarray*}
&&[a_\lambda b]=-[b_{-\lambda-\partial}a],~~~~\text{(skew-symmetry)}\\
&&[a_\lambda[b_\mu c]]=[[a_\lambda b]_{\lambda+\mu} c]+[b_\mu[a_\lambda c]],~~~~~~\text{(Jacobi identity)}
\end{eqnarray*}
for $a$, $b$, $c\in R$.

A {\bf left-symmetric conformal algebra} $R$ is a conformal algebra with the $\mathbb{C}$-bilinear
map $\cdot_\lambda \cdot: R\times R\rightarrow R[\lambda]$ satisfying
\begin{eqnarray}
(a_{\lambda}b)_{\lambda+\mu}c-a_{\lambda}(b_\mu
c)=(b_{\mu}a)_{\lambda+\mu}c-b_\mu(a_\lambda c),
\end{eqnarray}
for $a$, $b$, $c\in R$.}
\end{defi}

A  conformal algebra
is called {\bf finite} if it is finitely generated as a
$\mathbb{C}[\partial]$-module. The {\bf rank} of a conformal
algebra $R$ is its rank as a $\mathbb{C}[\partial]$-module.


\begin{pro}
If $A$ is a left-symmetric conformal algebra, then the
$\lambda$-bracket
\begin{equation}\label{3}[a_\lambda b]=a_\lambda
b-b_{-\lambda-\partial}a,~~~\text{for any }~~~a, ~~b \in A,\end{equation}
defines a Lie conformal algebra $\mathfrak{g}(A)$, which is called {\bf the
sub-adjacent Lie conformal algebra of $A$}.
In this case,  $A$ is also called {\bf a
compatible left-symmetric conformal algebra structure on the Lie
conformal algebra $\mathfrak{g}(A)$}.
\end{pro}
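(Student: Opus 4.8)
The plan is to verify directly that the $\lambda$-bracket defined by \eqref{3} satisfies the two axioms of a Lie conformal algebra: the conformal sesquilinearity relations from Definition~\ref{def1}, skew-symmetry, and the Jacobi identity. The sesquilinearity is immediate: since $A$ is a conformal algebra, $\partial a_\lambda b = -\lambda a_\lambda b$ and $a_\lambda\partial b = (\partial+\lambda)a_\lambda b$, and one checks that the combination $a_\lambda b - b_{-\lambda-\partial}a$ inherits these, using that $b_{-\lambda-\partial}a$ is shorthand for substituting $-\lambda-\partial$ into the polynomial variable and then letting $\partial$ act on the left. Skew-symmetry, i.e. $[a_\lambda b] = -[b_{-\lambda-\partial}a]$, is essentially built into the definition: replacing $\lambda$ by $-\lambda-\partial$ in $a_\lambda b - b_{-\lambda-\partial}a$ swaps the two terms up to the required sign, so this is a short formal manipulation.

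The substantive step is the Jacobi identity
\[
[a_\lambda[b_\mu c]] = [[a_\lambda b]_{\lambda+\mu} c] + [b_\mu[a_\lambda c]].
\]
The approach is to expand every $\lambda$-bracket using \eqref{3} into products in $A$, so that both sides become $\mathbb{C}$-linear combinations of iterated $\lambda$-products of $a,b,c$ in all six orders. I would then show that the difference (LHS minus RHS) collapses to zero by repeatedly invoking the left-symmetric conformal identity
\[
(a_\lambda b)_{\lambda+\mu}c - a_\lambda(b_\mu c) = (b_\mu a)_{\lambda+\mu}c - b_\mu(a_\lambda c),
\]
which says precisely that the ``conformal associator'' $(a,b,c) \mapsto (a_\lambda b)_{\lambda+\mu}c - a_\lambda(b_\mu c)$ is symmetric in its first two arguments. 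Concretely, after expansion one groups the twelve resulting terms into associator expressions; the symmetry of the associator in the first two slots is exactly what is needed to cancel them in pairs. Care is required with the spectral parameters: when a product like $b_{-\mu-\partial}c$ sits inside another bracket, substituting into the outer variable produces shifts such as $-\lambda-\mu-\partial$, and one must track these substitutions consistently (this is the conformal analogue of the classical left-symmetric algebra computation, where instead one just permutes $a,b,c$).

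The main obstacle I anticipate is precisely this bookkeeping of the formal variables under nested substitutions — in the classical case the Jacobi identity for the commutator of a left-symmetric algebra is a clean six-term cancellation, but in the conformal setting each term carries a polynomial in $\partial$ and $\lambda,\mu$, and one must be careful that the substitution rules (e.g. $\partial$ acting on the left of $a_\lambda b$ equals $-\lambda$ times it) are applied correctly so that terms genuinely cancel rather than merely cancelling ``up to $\partial$-terms.'' A clean way to organize this is to first record the identity $(a_\lambda b)_{\lambda+\mu}c = (b_\mu a)_{\lambda+\mu}c - b_\mu(a_\lambda c) + a_\lambda(b_\mu c)$ and its consequence obtained by the skew-symmetry of the sub-adjacent bracket, and then substitute mechanically; once the expansions are lined up, the left-symmetry axiom discharges each pair. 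Finally, since the four needed identities (the two sesquilinearity relations, skew-symmetry, Jacobi) all hold, $\mathfrak{g}(A)$ is a Lie conformal algebra, and the last sentence of the statement is just terminology.
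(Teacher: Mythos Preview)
Your approach is correct and is the standard direct verification. Note, however, that the paper does not actually supply a proof of this proposition: it is stated in the preliminaries section with the remark that ``most of the results in this section can be found in \cite{K1,HL},'' so there is no proof in the paper to compare against. Your outline---checking sesquilinearity and skew-symmetry formally, then expanding the Jacobi identity into iterated $\lambda$-products and using the symmetry of the conformal associator in its first two arguments to cancel terms---is exactly how this is done in the references, and your caution about tracking the substitutions (e.g.\ that $(b_{-\lambda-\partial}a)_{\lambda+\mu}c = (b_\mu a)_{\lambda+\mu}c$ because $\partial$ on the left slot becomes $-(\lambda+\mu)$) is well placed, since that is where errors typically occur.
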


\begin{defi}\label{def:module} {\rm
A {\bf module $M$ over a Lie conformal algebra $R$} is a $\mathbb{C}[\partial]$-module endowed with a $\mathbb{C}$-bilinear map
$R\times M\longrightarrow M[\lambda]$, $(a, v)\mapsto a_\lambda v$, satisfying the following axioms $(a, b\in R, v\in M)$:\\
(LM1)$\qquad\qquad (\partial a)_\lambda v=-\lambda a_\lambda v,~~~a_\lambda(\partial v)=(\partial+\lambda)a_\lambda v,$\\
(LM2)$\qquad\qquad [a_\lambda b]_{\lambda+\mu}v=a_\lambda(b_\mu v)-b_\mu(a_\lambda v).$}
\end{defi}

An $R$-module $M$ is called {\bf finite} if it is finitely generated as a $\mathbb{C}[\partial]$-module.

\begin{defi}{\rm
Let $U$ and $V$ be two $\mathbb{C}[\partial]$-modules. A {\bf conformal linear map} from $U$ to $V$ is a $\mathbb{C}$-linear map $a: U\rightarrow V[\lambda]$, denoted by $a_\lambda: U\rightarrow V$, such that $[\partial, a_\lambda]=-\lambda a_\lambda$. Denote the $\mathbb C$-vector space of all such maps by $\text{Chom}(U,V)$. It has a canonical structure of a $\mathbb{C}[\partial]$-module:
$$(\partial a)_\lambda =-\lambda a_\lambda.$$ Define the {\bf conformal dual} of a $\mathbb{C}[\partial]$-module $U$ as $U^{\ast c}=\text{Chom}(U,\mathbb{C})$, where $\mathbb{C}$ is viewed as the trivial $\mathbb{C}[\partial]$-module, that is
$$U^{\ast c}=\{a:U\rightarrow \mathbb{C}[\lambda]~~|~~\text{$a$ is $\mathbb{C}$-linear and}~~a_\lambda(\partial b)=\lambda a_\lambda b\}.$$}
\end{defi}

Let $U$ and $V$ be finite modules over a Lie conformal algebra $R$. Then the $\mathbb{C}[\partial]$-module $\text{Chom}(U,V)$ has an $R$-module structure defined by:
\begin{eqnarray}\label{oo2}
(a_\lambda \varphi)_\mu u=a_\lambda(\varphi_{\mu-\lambda}u)-\varphi_{\mu-\lambda}(a_\lambda u).\end{eqnarray}
for $a\in R, \varphi\in \text{Chom}(U,V), u\in U$.
Hence, one special case is the contragradient conformal $R$-module $U^{\ast c}$, where $\mathbb{C}$ is viewed as the trivial $R$-module and $\mathbb{C}[\partial]$-module. In particular, when $U=V$, set $\text{Cend}(V)=\text{Chom}(V,V)$. The $\mathbb{C}[\partial]$-module $\text{Cend}(V)$ has a canonical structure of an associative conformal algebra defined by
\begin{eqnarray}
(a_\lambda b)_\mu v=a_\lambda (b_{\mu-\lambda} v),~~~~\text{ $a$, $b\in \text{Cend}(V)$, $v\in V$.}\end{eqnarray}
Therefore $\text{gc}(V):=\text{Chom}(V,V)$ has a Lie conformal algebra structure defined by
\begin{eqnarray}
[a_\lambda b]_\mu v=a_\lambda (b_{\mu-\lambda} v)-b_{\mu-\lambda}(a_{\lambda} v), ~~~ a,~b\in \text{gc}(V), v\in V.\end{eqnarray}
$\text{gc}(V)$ is called the {\bf general Lie conformal algebra} of $V$.

\begin{rmk}\label{rrm}{\rm
By Definition~\ref{def:module}, it is easy to see that a module over a Lie conformal algebra $R$ in a finite  $\mathbb{C}[\partial]$-module $V$ is the same as a homomorphism of Lie conformal algebra $\rho: R\rightarrow \text{gc}(V)$ which is called a {\bf representation} of $R$. Denote the {\bf adjoint representation} of $R$ by $ad$, i.e. $ad(a)_\lambda b=[a_\lambda b]$, where $a$, $b\in R$. Moreover, the contragradient conformal $R$-module
$V^{\ast c}$ is the same as the representation $\rho^\ast: R\rightarrow
\text{gc}(V^{\ast c})$ which is dual to $\rho$. By Eq.~(\ref{oo2}), the relation
between $\rho^\ast$ and $\rho$ is given as follows.
\begin{eqnarray*}
(\rho^\ast(a)_\lambda \varphi)_\mu u=-\varphi_{\mu-\lambda}(\rho(a)_\lambda u),\;\;\forall a\in R, \varphi\in V^{\ast c}, u\in V.
\end{eqnarray*}}
 \end{rmk}

\begin{pro}{\rm (\cite{HL1})}
Let $R$ be a Lie conformal algebra. Let $V$ be a $\mathbb{C}[\partial]$-module of finite rank and
$\rho: R\rightarrow gc(V)$ be a representation of $R$. Then $R\oplus V$ is endowed with a $\mathbb{C}[\partial]$-module
structure given by
$$\partial (a+v)=\partial a+\partial v,\;\;\forall a\in R, v\in V.$$
Hence the $\mathbb{C}[\partial]$-module $R\oplus V$ is endowed with a  Lie conformal algebra structure as follows.
\begin{eqnarray*}
[{(a+u)}_\lambda (b+v)]=[a_\lambda b]+\rho(a)_\lambda v-\rho(b)_{-\lambda-\partial} u,~~~~~~~\text{for any $a$, $b\in R$ and $u$, $v\in V$.}
\end{eqnarray*}
 This Lie conformal algebra is called {\bf the semi-direct sum of $R$ and $V$},
denoted by $R\ltimes_\rho V$.
\end{pro}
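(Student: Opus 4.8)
The plan is to verify directly that the proposed $\lambda$-bracket on $R \oplus V$ satisfies the two defining axioms of a Lie conformal algebra, namely skew-symmetry and the Jacobi identity, together with the sesquilinearity relations $\partial a_\lambda b = -\lambda a_\lambda b$ and $a_\lambda \partial b = (\partial + \lambda)a_\lambda b$. First I would check the sesquilinearity relations for the bracket
\[
[(a+u)_\lambda(b+v)] = [a_\lambda b] + \rho(a)_\lambda v - \rho(b)_{-\lambda-\partial}u :
\]
these follow componentwise, using that $[\cdot_\lambda\cdot]$ on $R$ is a conformal bracket, that $\rho(a)_\lambda$ is a conformal linear map for each $a$ (so $(\partial a)_\lambda v = -\lambda\,\rho(a)_\lambda v$ and $\rho(a)_\lambda \partial v = (\partial+\lambda)\rho(a)_\lambda v$ hold in $\mathrm{gc}(V)$), and the easy fact that the assignment $a \mapsto \rho(a)_{-\lambda-\partial}$ transforms correctly under $\partial$ acting on either slot. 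This step is routine.

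Next I would verify skew-symmetry, i.e. that $[(a+u)_\lambda(b+v)] = -[(b+v)_{-\lambda-\partial}(a+u)]$. Expanding the right-hand side gives $-[b_{-\lambda-\partial}a] - \rho(b)_{-\lambda-\partial}u + \rho(a)_{\lambda}v$ — here one must be slightly careful with the substitution $\lambda \mapsto -\lambda-\partial$ inside the term $\rho(a)_{-(-\lambda-\partial)-\partial}v = \rho(a)_\lambda v$, which is exactly where the sign conventions of conformal algebras do the work — and this matches the left-hand side precisely because $[\cdot_\lambda\cdot]$ on $R$ is already skew-symmetric. The main content of the proof is the Jacobi identity
\[
[(a+u)_\lambda[(b+v)_\mu(c+w)]] = [[(a+u)_\lambda(b+v)]_{\lambda+\mu}(c+w)] + [(b+v)_\mu[(a+u)_\lambda(c+w)]].
\]
I would expand both sides into their $R$-component and $V$-component. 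The $R$-component reduces to the Jacobi identity for $R$ itself. The $V$-component, after collecting terms, splits into three groups: one group is precisely the statement that $\rho$ respects the bracket on $R$, i.e. $\rho([a_\lambda b])_{\lambda+\mu} = \rho(a)_\lambda\rho(b)_\mu - \rho(b)_\mu\rho(a)_\lambda$ applied to $w$ (this is axiom (LM2), equivalently the assertion that $\rho: R \to \mathrm{gc}(V)$ is a homomorphism); a symmetric group handling the $u$ and $v$ entries, which again uses (LM2) together with the conformal-linearity relation $[\partial,\rho(a)_\lambda] = -\lambda\rho(a)_\lambda$ to reconcile the shifted spectral parameters; and cross terms that cancel by sesquilinearity.

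The step I expect to be the main obstacle is the bookkeeping of spectral parameters in the $V$-component of the Jacobi identity: the terms of the form $\rho(b)_{-\mu-\partial}$ acting inside another $\rho(a)_\lambda$ generate substitutions like $\rho(b)_{-\mu-\partial-\lambda}$ or $\rho(b)_{-(\lambda+\mu)-\partial}$, and one has to use $[\partial,\rho(a)_\lambda]=-\lambda\rho(a)_\lambda$ repeatedly to move $\partial$'s past the conformal maps so that the arguments line up and the (LM2) relations can be applied in the correct form. Once the parameter substitutions are organized carefully, the identity follows from (LM2) and sesquilinearity alone, with no further input; I would present the $V$-component computation in full and merely remark that the $R$-component is the Jacobi identity of $R$.
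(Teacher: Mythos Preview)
Your proposal is correct and is exactly the standard direct verification one would expect. Note, however, that the paper does not supply its own proof of this proposition: it is stated in the Preliminaries section as a background result cited from \cite{HL1}, so there is no argument in the paper to compare against. Your outline---checking sesquilinearity, then skew-symmetry, then splitting the Jacobi identity into the $R$-component (which is the Jacobi identity of $R$) and the $V$-component (which reduces to the module axiom (LM2))---is precisely how such a result is established, and the points you flag about tracking the shifted spectral parameters are the only places where care is needed.
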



The tensor product $U\otimes V$ can be naturally endowed with an $R$-module structure as follows.
$$\partial(u\otimes v)=\partial u\otimes v+u\otimes \partial v,$$
and
$$r_\lambda(u\otimes v)=r_\lambda u\otimes v+u\otimes r_\lambda v,$$
where $u\in U$, $v\in V$ and $r\in R$.
\begin{pro}\label{prop1} {\rm (\cite{BKL})}
Let $U$ and $V$ be two $R$-modules. Suppose that $U$ is a $\mathbb{C}[\partial]$-module of finite rank. Then
$U^{\ast c}\otimes V\cong \text{Chom}(U,V)$ as $R$-modules with the identification $(f\otimes v)_\lambda (u)=f_{\lambda+\partial}(u)v$ where
$f\in U^{\ast c}$, $u\in U$ and $v\in V$.
\end{pro}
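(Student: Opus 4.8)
The plan is to construct the isomorphism explicitly, verify it is $\mathbb{C}[\partial]$-linear and $R$-linear, and then exhibit an inverse. First I would write down the candidate map $\Phi\colon U^{\ast c}\otimes V\to \text{Chom}(U,V)$ by the prescribed formula $\Phi(f\otimes v)_\lambda(u)=f_{\lambda+\partial}(u)\,v$, where the notation means: apply $f_\mu$ to $u$ to get a polynomial in $\mathbb{C}[\mu]$, tensor with $v$, and then substitute $\mu\mapsto \lambda+\partial$ so that the resulting powers of $\partial$ act on $v$. One must first check this is well-defined, i.e. that $\Phi(f\otimes v)_\lambda$ really lies in $\text{Chom}(U,V)$: the condition $[\partial,\Phi(f\otimes v)_\lambda]=-\lambda\Phi(f\otimes v)_\lambda$ should follow from the defining relation $f_\mu(\partial u)=\mu f_\mu(u)$ of the conformal dual together with the $\mathbb{C}[\partial]$-module structure on $V$; this is a short computation comparing $\Phi(f\otimes v)_\lambda(\partial u)$ and $\partial\big(\Phi(f\otimes v)_\lambda(u)\big)$.

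Next I would check that $\Phi$ is a homomorphism of $\mathbb{C}[\partial]$-modules. On the source, $\partial$ acts by $\partial(f\otimes v)=\partial f\otimes v+f\otimes\partial v$ with $(\partial f)_\mu=-\mu f_\mu$ (the conformal-dual action), while on $\text{Chom}(U,V)$ one has $(\partial a)_\lambda=-\lambda a_\lambda$. Applying $\Phi$ to $\partial f\otimes v+f\otimes\partial v$ and evaluating at $u$ should telescope: the term from $\partial f$ contributes $-(\lambda+\partial)f_{\lambda+\partial}(u)v$-type expression and the term from $f\otimes\partial v$ contributes $f_{\lambda+\partial}(u)\partial v$, and these combine to $-\lambda f_{\lambda+\partial}(u)v$, matching $(\partial\Phi(f\otimes v))_\lambda(u)$. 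Then I would check $R$-linearity, comparing the $R$-action on $U^{\ast c}\otimes V$ (which is $r_\lambda(f\otimes v)=r_\lambda f\otimes v+f\otimes r_\lambda v$, with $r_\lambda f$ given via the contragredient formula from Remark~\ref{rrm}, namely $(r_\lambda f)_\mu(u)=-f_{\mu-\lambda}(r_\lambda u)$) with the $R$-action on $\text{Chom}(U,V)$ given by Eq.~(\ref{oo2}), $(r_\lambda\varphi)_\mu u=r_\lambda(\varphi_{\mu-\lambda}u)-\varphi_{\mu-\lambda}(r_\lambda u)$. Matching these is a bookkeeping exercise in which one must carefully track the substitution $\mu\mapsto$ (something involving $\partial$), using that $r_\lambda$ commutes with the $\mathbb{C}[\partial]$-action in the appropriate conformal sense.

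Finally I would construct the inverse. Since $U$ is free (or at least finitely generated projective) of finite rank over $\mathbb{C}[\partial]$, pick a $\mathbb{C}[\partial]$-basis $\{u_i\}$ with conformal dual basis $\{f_i\}\subset U^{\ast c}$, and define $\Psi\colon\text{Chom}(U,V)\to U^{\ast c}\otimes V$ by $\Psi(\varphi)=\sum_i f_i\otimes(\varphi_{?}u_i)$, again with the appropriate $\partial$-substitution absorbed into $V$; checking $\Psi\circ\Phi=\text{id}$ and $\Phi\circ\Psi=\text{id}$ reduces to the dual-basis identities. I expect the main obstacle to be purely notational: keeping straight which variable the substitution $\mu=\lambda+\partial$ replaces, and ensuring that wherever $\partial$ appears as a result of such a substitution it acts on the correct tensor factor (the $V$-component). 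There is no conceptual difficulty—the result is essentially that a finitely generated free conformal module is ``conformally reflexive'' and that $\text{Hom}$ out of it is computed by the dual—but the conformal substitution conventions make the verification delicate; once a basis is fixed, everything follows from the conformal-dual axiom $f_\mu(\partial u)=\mu f_\mu(u)$ and the module axioms (LM1)--(LM2).
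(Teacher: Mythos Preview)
The paper does not actually prove this proposition; it is quoted from \cite{BKL} without argument. Your outline is the standard direct verification and is correct.

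The one place where your sketch is imprecise is the formula for the inverse $\Psi$. Since $\Phi(u_i^{*}\otimes v)_\lambda(u_j)=\delta_{ij}v$ is independent of $\lambda$, a single ``$\partial$-substitution'' into $\varphi_?(u_i)$ cannot by itself produce arbitrary $\lambda$-dependence; one must allow $\partial$-powers on the $U^{\ast c}$ factor. Concretely, for each basis vector $u_j$ write $\varphi_\lambda(u_j)=\sum_k(-\lambda-\partial)^k v_{jk}$ with $v_{jk}\in V$ (always possible and unique, by triangularity of the change of basis $\lambda^k\leftrightarrow(-\lambda-\partial)^k$ over $\mathbb{C}[\partial]$), and set $\Psi(\varphi)=\sum_{j,k}\partial^k u_j^{*}\otimes v_{jk}$. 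Then $\Phi\Psi=\mathrm{id}$ and $\Psi\Phi=\mathrm{id}$ follow from the dual-basis relations, with no hypothesis on $V$ needed. With this correction your plan goes through exactly as written.
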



\begin{defi}\label{555}{\rm
A  {\bf module $M$ over a left-symmetric conformal algebra $A$} is a $\mathbb{C}[\partial]$-module with two $\mathbb{C}$-bilinear maps $A\times M\rightarrow M[\lambda]$, $a\times v\rightarrow a_\lambda v$ and $M\times A\rightarrow M[\lambda]$, $v\times a\rightarrow v_\lambda a$ such that
$$(\partial a)_\lambda v=[\partial, a_\lambda]v=-\lambda a_\lambda v,\quad (\partial v)_\lambda a=[\partial, v_\lambda]a=-\lambda v_\lambda a,$$
$$(a_\lambda b)_{\lambda+\mu}v-a_\lambda(b_\mu v)=(b_\mu a)_{\lambda+\mu}v-b_\mu (a_\lambda v),$$
$$(a_\lambda v)_{\lambda+\mu}b-a_\lambda(v_\mu b)=(v_\mu a)_{\lambda+\mu}b-v_\mu(a_\lambda b)$$
hold for $a$, $b\in A$ and $v\in M$.}
\end{defi}
Similarly, an $A$-module is called {\bf finite} if it is finitely generated as a
$\mathbb{C}[\partial]$-module.

\begin{rmk}{\rm
Suppose $M$ is finite. Let $a_\lambda v=l_A(a)_\lambda v$ and $v_\lambda a=r_A(a)_{-\lambda-\partial} v$. It is easy to show that the  structure of a module $M$ over a left-symmetric conformal algebra $A$ is the same as a pair $\{l_A, r_A\}$, where $l_A, r_A: A\rightarrow \text{Cend}(M)$ are two $\mathbb{C}[\partial]$-module homomorphisms such that the following conditions hold
\begin{eqnarray}\label{101}
l_A(a_\lambda b)_{\lambda+\mu}v-l_A(a)_\lambda(l_A(b)_\mu v)=l_A(b_\mu a)_{\lambda+\mu}v-l_A(b)_\mu(l_A(a)_\lambda v),\end{eqnarray}
\begin{eqnarray}\label{102}
r_A(b)_{-\lambda-\mu-\partial}(l_A(a)_\lambda v)-l_A(a)_\lambda(r_A(b)_{-\mu-\partial}v)
=r_A(b)_{-\lambda-\mu-\partial}(r_A(a)_\lambda v)-r_A(a_\lambda b)_{-\mu-\partial} v,\end{eqnarray}
for any $a$, $b\in A$ and $v\in M$. Denote this module by $(M,l_A, r_A)$.}
\end{rmk}

Throughout this paper, we mainly deal with $\mathbb{C}[\partial]$-modules which are finitely generated. So, for convenience,  we use the notions of representations of conformal algebras instead of those of modules of conformal algebras.

\begin{pro}{\rm (\cite{HL1})}
Let $(M,l_A, r_A)$ be a finite module over a left-symmetric conformal algebra $A$. Then\\
(i) $l_A:A\rightarrow \text{gc}(M)$ is a representation of the sub-adjacent Lie conformal algebra $\mathfrak{g}(A)$.\\
(ii) $\rho=l_A-r_A$ is a representation of the Lie conformal algebra $\mathfrak{g}(A)$.\\
(iii) For any representation $\sigma:\mathfrak{g}(A)\rightarrow \text{gc}(M)$ of the Lie conformal algebra $\mathfrak{g}(A)$,
$(M,\sigma,0)$ is an $A$-module.
\end{pro}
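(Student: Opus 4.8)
The plan is to deduce (i) and (iii) as the two directions of a single computation that compares the sub-adjacent bracket \eqref{3} on $A$ with the Lie conformal bracket of $\text{gc}(M)$, and to deduce (ii) by identifying the sub-adjacent Lie conformal algebra of the split null extension $A\oplus M$ with a semi-direct sum.

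\textbf{The key identity for (i) and (iii).} Let $\phi\colon A\to\text{Cend}(M)$ be any $\mathbb{C}[\partial]$-module homomorphism, applied coefficient-wise to elements of $A[\lambda]$. Starting from $[a_\lambda b]=a_\lambda b-b_{-\lambda-\partial}a$ and using $\phi(\partial c)=\partial\phi(c)$ together with $(\partial\psi)_\nu=-\nu\psi_\nu$ for $\psi\in\text{Cend}(M)$, one checks that the formal substitution $\partial\mapsto-(\lambda+\mu)$ converts $-\lambda-\partial$ into $\mu$, so that
\[
\phi([a_\lambda b])_{\lambda+\mu}v=\phi(a_\lambda b)_{\lambda+\mu}v-\phi(b_\mu a)_{\lambda+\mu}v\qquad(v\in M).
\]
Comparing with the Lie conformal bracket of $\text{gc}(M)$, namely $[\phi(a)_\lambda\phi(b)]_{\lambda+\mu}v=\phi(a)_\lambda(\phi(b)_\mu v)-\phi(b)_\mu(\phi(a)_\lambda v)$, this shows that $\phi$ is a homomorphism of Lie conformal algebras $\mathfrak{g}(A)\to\text{gc}(M)$ if and only if $\phi$ satisfies Eq.~\eqref{101}. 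For (i), take $\phi=l_A$: since $(M,l_A,r_A)$ is a module, $l_A$ satisfies \eqref{101}, hence $l_A$ is a representation of $\mathfrak{g}(A)$. For (iii), take $\phi=\sigma$: being a representation of $\mathfrak{g}(A)$, $\sigma$ satisfies \eqref{101}; by the Remark following Definition~\ref{555}, $(M,\sigma,0)$ is then an $A$-module, because $\sigma$ and $0$ are $\mathbb{C}[\partial]$-module homomorphisms into $\text{Cend}(M)$, Eq.~\eqref{101} holds, and Eq.~\eqref{102} holds trivially (both sides vanish when $r_A=0$).

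\textbf{Part (ii).} I would form the split null extension $B:=A\oplus M$ with the $\lambda$-product
\[
(a+u)_\lambda(b+v):=a_\lambda b+l_A(a)_\lambda v+r_A(b)_{-\lambda-\partial}u .
\]
The $A$-component of the left-symmetry axiom for $B$ is the left-symmetry of $A$, and the $M$-component, organized by which of the three arguments lies in $M$, unwinds to exactly Eqs.~\eqref{101} and \eqref{102}, the cases ``first argument in $M$'' and ``second argument in $M$'' yielding the same identity because the left-symmetry axiom is symmetric in its first two arguments. Hence $B$ is a left-symmetric conformal algebra, so by the Proposition establishing the sub-adjacent Lie conformal algebra, $\mathfrak{g}(B)$ is a Lie conformal algebra. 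Computing its bracket and simplifying $(b+v)_{-\lambda-\partial}(a+u)$ by the conformal sesquilinearity relations gives
\[
[(a+u)_\lambda(b+v)]_{\mathfrak{g}(B)}=[a_\lambda b]_{\mathfrak{g}(A)}+\rho(a)_\lambda v-\rho(b)_{-\lambda-\partial}u,\qquad\rho:=l_A-r_A,
\]
so $\mathfrak{g}(B)$ is the $\mathbb{C}[\partial]$-module $\mathfrak{g}(A)\oplus M$ equipped with this semi-direct-sum bracket. Finally, evaluating the Jacobi identity of $\mathfrak{g}(B)$ on two elements of $A$ and one element of $M$ and reading off the $M$-component yields axiom (LM2) for $\rho$; together with the obvious sesquilinearity, this says that $\rho$ is a representation of $\mathfrak{g}(A)$, which is (ii).

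\textbf{Main obstacle.} The only genuine difficulty is the careful bookkeeping of spectral parameters, that is, keeping precise track of how the formal substitution $\partial\mapsto-(\text{total incoming parameter})$ propagates through nested $\lambda$-actions; this is most delicate in part (ii), where $r_A(b)_{-\lambda-\partial}u$ already carries a $\lambda$-dependence before it is fed into a further product in $B$. Once these substitution conventions are pinned down, every step above is a mechanical rewriting using Eqs.~\eqref{101} and \eqref{102}, the left-symmetry of $A$, and the conformal sesquilinearity identities.
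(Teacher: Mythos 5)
Your proof is correct. The paper itself imports this proposition from \cite{HL1} without proof, and your reconstruction is the standard argument: the substitution $\partial\mapsto -(\lambda+\mu)$ correctly turns $l_A(b_{-\lambda-\partial}a)_{\lambda+\mu}$ into $l_A(b_\mu a)_{\lambda+\mu}$, so Eq.~(\ref{101}) is exactly the statement that $l_A$ (or $\sigma$) intertwines the sub-adjacent bracket with the $\mathrm{gc}(M)$ bracket, giving (i) and (iii), while (ii) follows, as you say, by reading off the $M$-component of the Jacobi identity of $\mathfrak{g}(A\ltimes_{l_A,r_A}M)=\mathfrak{g}(A)\ltimes_{l_A-r_A}M$.
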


\begin{cor}
Let $A$ be a finite left-symmetric conformal algebra. Define two $\mathbb{C}[\partial]$-module homomorphisms $L_A$ and $R_A$ from $A$ to $\text{Cend}(A)$ by $L_A(a)_\lambda b=a_\lambda b$ and $R_A(a)_\lambda b=b_{-\lambda-\partial}a$ for any $a$, $b\in A$. Then $L_A:A\rightarrow \text{gc}(A)$ and $\rho=L_A-R_A$ are two representations of the Lie conformal algebra $\mathfrak{g}(A)$.
\end{cor}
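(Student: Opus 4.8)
The plan is to recognize $(L_A,R_A)$ as the adjoint module structure of $A$ on itself and then quote the preceding proposition. First I would take $M=A$ as a $\mathbb{C}[\partial]$-module and equip it with the two $\mathbb{C}$-bilinear maps $A\times A\to A[\lambda]$ given by $a\times v\mapsto a_\lambda v$ and $v\times a\mapsto v_\lambda a$, where both products are the left-symmetric conformal product of $A$ itself. The first step is to check that this data makes $A$ a module over the left-symmetric conformal algebra $A$ in the sense of Definition~\ref{555}. The two sesquilinearity relations $(\partial a)_\lambda v=-\lambda a_\lambda v$ and $(\partial v)_\lambda a=-\lambda v_\lambda a$ are immediate from the conformal algebra axioms in Definition~\ref{def1}. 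The two remaining identities in Definition~\ref{555} become, after relabelling the three arguments, exactly two copies of the defining identity $(a_\lambda b)_{\lambda+\mu}c-a_\lambda(b_\mu c)=(b_\mu a)_{\lambda+\mu}c-b_\mu(a_\lambda c)$ of a left-symmetric conformal algebra, so they hold automatically. Equivalently, one could instead verify identities~(\ref{101}) and~(\ref{102}) of the Remark directly, with the same conclusion.

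Next I would pass to the description $(M,l_A,r_A)$ of this module used in the Remark after Definition~\ref{555}. Under the dictionary $a_\lambda v=l_A(a)_\lambda v$ and $v_\lambda a=r_A(a)_{-\lambda-\partial}v$ one reads off $l_A(a)_\lambda b=a_\lambda b=L_A(a)_\lambda b$ and $r_A(a)_\lambda b=b_{-\lambda-\partial}a=R_A(a)_\lambda b$. Along the way one checks the routine facts that $L_A(a)$ and $R_A(a)$ are genuine elements of $\text{Cend}(A)$ and that $L_A$, $R_A$ are $\mathbb{C}[\partial]$-module homomorphisms; again these use only the conformal sesquilinearity relations. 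Hence the module $(A,L_A,R_A)$ of the statement is precisely the adjoint module just constructed, and it is finite because $A$ is.

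Finally, applying parts (i) and (ii) of the preceding proposition to the finite module $(A,L_A,R_A)$ immediately yields that $L_A\colon A\to\text{gc}(A)$ is a representation of the sub-adjacent Lie conformal algebra $\mathfrak{g}(A)$ and that $\rho=L_A-R_A$ is a representation of $\mathfrak{g}(A)$, which is exactly the assertion.

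I do not anticipate a genuine obstacle here: the corollary is essentially the $M=A$ specialization of the preceding proposition. The only point requiring a little care is the bookkeeping around the substitution $\lambda\mapsto-\lambda-\partial$ when matching the right-multiplication map with $R_A$ (and the parallel check that $R_A(a)$ is a conformal endomorphism of $A$), but this is purely mechanical.
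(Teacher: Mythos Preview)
Your proposal is correct and matches the paper's approach exactly: the paper states this as a corollary with no proof, precisely because it is the $M=A$ specialization of the immediately preceding proposition, and your write-up supplies the (routine) verification that the adjoint action $(A,L_A,R_A)$ is indeed a module in the sense of Definition~\ref{555} before invoking parts (i) and (ii).
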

\begin{rmk}
{\rm $L_A$ is called the {\bf regular representation} of $\mathfrak{g}(A)$.}
\end{rmk}

\begin{pro}{\rm (\cite{HL1})}
Let $A$ be a left-symmetric conformal algebra and  $(M,l_A, r_A)$ be a module of $A$. Then the $\mathbb{C}[\partial]$-module $A\oplus M$ is a left-symmetric conformal algebra with the following $\lambda$-product
\begin{eqnarray}
(a+u)_\lambda (b+v)=a_\lambda b+l_A(a)_\lambda v+r_A(b)_{-\lambda-\partial}u,~~a,~b\in A,~~u,~v\in M.\end{eqnarray}
Denote it by $A\ltimes_{l_A,r_A} M$, which is called {\bf the semi-direct sum of $A$ and $M$}.
\end{pro}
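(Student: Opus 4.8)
The plan is to check directly that the stated $\lambda$-product turns $A\oplus M$ into a conformal algebra satisfying the left-symmetric conformal identity, using that this product restricted to $A\times A$ is the product of $A$, restricted to $A\times M$ sends $(a,v)$ to $l_A(a)_\lambda v$, restricted to $M\times A$ sends $(u,b)$ to $r_A(b)_{-\lambda-\partial}u$ (which is nothing but the right action $u_\lambda b$ of Definition~\ref{555}), and restricted to $M\times M$ is zero.

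First I would verify the two sesquilinearity relations $(\partial x)_\lambda y=-\lambda\,x_\lambda y$ and $x_\lambda(\partial y)=(\partial+\lambda)x_\lambda y$ for $x=a+u$ and $y=b+v$. Projecting onto the $A$-component these are the sesquilinearity relations of $A$. Projecting onto the $M$-component, the contributions from $l_A$ are controlled by $l_A$ being a $\mathbb{C}[\partial]$-module homomorphism into $\text{Cend}(M)$ (so $l_A(\partial a)_\lambda v=-\lambda\,l_A(a)_\lambda v$) together with each $l_A(a)$ being a conformal linear map (so $l_A(a)_\lambda(\partial v)=(\partial+\lambda)l_A(a)_\lambda v$); the contributions from $r_A$ are handled the same way, using in addition that the right action already satisfies $(\partial u)_\lambda b=-\lambda(u_\lambda b)$ and $u_\lambda(\partial b)=(\partial+\lambda)(u_\lambda b)$ by Definition~\ref{555}. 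This step is routine.

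The substance is the left-symmetric conformal identity $(x_\lambda y)_{\lambda+\mu}z-x_\lambda(y_\mu z)=(y_\mu x)_{\lambda+\mu}z-y_\mu(x_\lambda z)$. By $\mathbb{C}$-bilinearity it suffices to check it on triples $(x,y,z)$ with each entry lying in $A$ or in $M$. If at least two of the three lie in $M$, every one of the six terms contains the product of two elements of $M$, hence vanishes, and the identity holds trivially. If all three lie in $A$, the identity is exactly the defining identity of the left-symmetric conformal algebra $A$ (Definition~\ref{def1}). There remain the three cases with exactly one entry in $M$. For $(a,b,w)$ with $a,b\in A$, $w\in M$, expanding both sides (all terms now lie in $M$) yields
\[
l_A(a_\lambda b)_{\lambda+\mu}w-l_A(a)_\lambda(l_A(b)_\mu w)=l_A(b_\mu a)_{\lambda+\mu}w-l_A(b)_\mu(l_A(a)_\lambda w),
\]
which is precisely Eq.~(\ref{101}). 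For $(a,w,b)$ and for $(w,a,b)$ the analogous expansion produces an identity in $M$ which, after an appropriate renaming of $\lambda$ and $\mu$, is Eq.~(\ref{102}).

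The only real obstacle I anticipate is bookkeeping, namely keeping the $\lambda$-variables straight and, above all, correctly resolving the formal substitution $\nu\mapsto-\lambda-\partial$ inside $r_A(a)_{-\lambda-\partial}$ when it is either composed with another conformal linear map or applied to an argument that itself already carries a $\partial$ — as happens in a term such as $(u_\mu a)_{\lambda+\mu}b=r_A(b)_{-\lambda-\mu-\partial}\bigl(r_A(a)_{-\mu-\partial}u\bigr)$; here one uses the conformal-linear-map relation $[\partial,r_A(a)_\nu]=-\nu\,r_A(a)_\nu$ to move the inner $\partial$ out. No input beyond Definition~\ref{555} (that is, Eqs.~(\ref{101})–(\ref{102})) and the defining identity of $A$ is needed; finiteness of $M$ enters only so that the pair $(l_A,r_A)$ is available.
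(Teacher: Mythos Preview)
Your direct verification is correct and is the standard approach; the paper itself does not prove this proposition but simply quotes it from \cite{HL1}, so there is no in-paper argument to compare against. One small point of bookkeeping worth tightening: in your discussion of the term $(u_\mu a)_{\lambda+\mu}b$, once you apply the sesquilinearity $(\partial m)_{\lambda+\mu}b=-(\lambda+\mu)\,m_{\lambda+\mu}b$ to $r_A(a)_{-\mu-\partial}u$, the inner index $-\mu-\partial$ becomes $-\mu+(\lambda+\mu)=\lambda$, so the resolved expression is $r_A(b)_{-\lambda-\mu-\partial}\bigl(r_A(a)_\lambda u\bigr)$, which is exactly the form appearing in Eq.~(\ref{102}); with that adjustment your case analysis matches Eqs.~(\ref{101})--(\ref{102}) on the nose (the case $(a,w,b)$ gives Eq.~(\ref{102}) directly, and $(w,a,b)$ gives it after swapping $\lambda\leftrightarrow\mu$).
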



Finally, let us recall the definition of coefficient algebra of a conformal algebra. Let $\chi$ be a variety of algebras (Lie,
left-symmetric, etc)
and $R$ be a $\chi$-conformal algebra.
There is an associated $\chi$-algebra
 constructed as follows.
Set $$a_\lambda b=\sum_{n\in \mathbb{N}}\frac{\lambda^n}{n!}a_{(n)}b,$$
where $a_{(n)}b$ is called {\bf the $n$-th product} of $a$ and $b$.
Let Coeff$R$ be the quotient
of the vector space with basis $a_n$ $(a\in R, n\in\mathbb{Z})$ by
the subspace spanned over $\mathbb{C}$ by
elements:
$$(\alpha a)_n-\alpha a_n,~~(a+b)_n-a_n-b_n,~~(\partial
a)_n+na_{n-1},~~~\text{where}~~a,~~b\in R,~~\alpha\in \mathbb{C},~~n\in
\mathbb{Z}.$$ The operation on $\text{Coeff}R$ is defined as follows.
\begin{eqnarray}\label{eq1}
a_m\cdot b_n=\sum_{j\in \mathbb{N}}\left(\begin{array}{ccc}
m\\j\end{array}\right)(a_{(j)}b)_{m+n-j}.\end{eqnarray} Then
$\text{Coeff}R$ is a $\chi$-algebra (\cite{K1}), which
 is called the {\bf coefficient algebra} of $R$.

\section{The operator forms of conformal CYBE}
In this section, we study the operator forms of conformal CYBE and give a relation between the non-degenerate skew-symmetric solutions of conformal CYBE and 2-cocycles of Lie conformal algebras.

Let $R$ be a Lie conformal algebra and $r=\sum_{i} a_i\otimes b_i\in R\otimes R$. Set $r^{21}=\sum_{i} b_i\otimes a_i$.  We say $r$ is {\bf skew-symmetric} if $r=-r^{21}$, whereas $r$ is called {\bf symmetric} if $r=r^{21}$.


Let $V$ be free and of finite rank as a $\mathbb{C}[\partial]$-module. Set $\{v_i\}\mid_{i=1,\cdots,m}$ be a $\mathbb{C}[\partial]$-basis of $V$. Obviously,
as an $R$-module, $V\cong {V^{\ast c}}^{\ast c}$ though the $\mathbb{C}[\partial]$-module homomorphism $v_i\rightarrow {v_i}^{\ast\ast}$ where ${y^\ast}_\lambda(v_i)={v_i^{\ast\ast}}_{-\lambda-\partial}(y^\ast)$ for any $y^\ast \in V^{\ast c}$.

Now suppose that $R$ is a finite Lie conformal algebra which is free as a $\mathbb{C}[\partial]$-module. Then  by the discussion above,
as $R$-modules, $$R\otimes R\cong {R^{\ast c}}^{\ast c}\otimes R \cong
\text{Chom}(R^{\ast c},R).$$ Define
$$\{u,a\}_\lambda=u_\lambda(a),\;\; {\rm and}\;\;
\{u\otimes v, a\otimes b\}_{(\lambda,\mu)}=\{u,a\}_\lambda \{v,b\}_\mu.$$ where $a$, $b\in R$ and $u$, $v\in R^{\ast c}$.
Then  by Proposition~ \ref{prop1}, for any $r\in R\otimes R$, we associate
a conformal linear map $T^r\in \text{Chom}(R^{\ast c},R)$ as follows.
\begin{eqnarray}\label{eqn5}
\{ f, T^r_{-\mu-\partial}(g)\}_\lambda =\{ g\otimes f, r\}_{(\mu,\lambda)},~~~~~~~~~~~~\text{$f$, $g\in R^{\ast c}$.}
\end{eqnarray}
Set $r=\sum_{i} a_i\otimes b_i\in R\otimes R$. Then  by Eq.~(\ref{eqn5}), we get
$$T_{\lambda}^r(u)=\sum_i \{ u, a_i\}_{-\lambda-\partial} b_i, ~~~~u\in R^{\ast c}.$$

\begin{thm}\label{tthe3}
Let $R$ be a finite Lie conformal algebra which is free as a $\mathbb{C}[\partial]$-module and $r\in R\otimes R$ be skew-symmetric. Then $r$ is a solution of
conformal CYBE if and only if the $T^r\in \text{Chom}(R^{\ast c},R)$ corresponding to $r$ satisfies
\begin{eqnarray}
[T_0^r(u)_\lambda T_0^r(v)]= T_0^r(ad^{\ast}(T_0^r(u))_\lambda v
-ad^{\ast}(T_0^r(v))_{-\lambda-\partial} u),\;\;\forall u, v\in R^{\ast c},
\end{eqnarray}
where $T_0^r=T_\lambda^r\mid_{\lambda=0}$.

\end{thm}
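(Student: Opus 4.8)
The plan is to transport the identity $[[r,r]]\equiv 0\pmod{(\partial^{\otimes^3})}$ from $R\otimes R\otimes R$ to the operator level by pairing two of the three tensor factors against arbitrary conformal functionals in $R^{\ast c}$; since $R$ is free of finite rank, this pairing separates the elements of $R\otimes R\otimes R$ modulo $(\partial^{\otimes^3})$ (it is, via the identifications of Proposition~\ref{prop1}, essentially the inverse of the correspondence $r\mapsto T^r$, now applied to two of the three factors), so the resulting operator identity will be equivalent to conformal CYBE. Before computing I would record the two descriptions of $T^r$ furnished by skew-symmetry: writing $r=\sum_i a_i\otimes b_i=-\sum_i b_i\otimes a_i$,
\begin{equation*}
T^r_\lambda(u)=\sum_i u_{-\lambda-\partial}(a_i)\,b_i=-\sum_i u_{-\lambda-\partial}(b_i)\,a_i,\qquad u\in R^{\ast c},
\end{equation*}
together with the corresponding two forms of $T^r_0$. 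I would also use the coadjoint formula $(ad^\ast(a)_\lambda u)_\mu w=-u_{\mu-\lambda}([a_\lambda w])$ (the case $\rho=ad$ of Remark~\ref{rrm}), the functional rule $u_\lambda(\partial x)=\lambda u_\lambda(x)$ for $u\in R^{\ast c}$, the $\lambda$-bracket rules $[\partial x_\mu y]=-\mu[x_\mu y]$ and $[x_\mu\partial y]=(\partial+\mu)[x_\mu y]$, and the analogous identity $ad^\ast(\partial a)_\mu=-\mu\,ad^\ast(a)_\mu$.

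The core computation is as follows: expand $[[r,r]]$ with $r=\sum_i a_i\otimes b_i$, write each $\lambda$-bracket through its $n$-th products, perform the substitutions $\mu=1\otimes\partial\otimes1$ and $\mu=1\otimes1\otimes\partial$, and then pair the second and third tensor factors with arbitrary $u,v\in R^{\ast c}$ while keeping the first factor in $R$. Using $u_\lambda(\partial x)=\lambda u_\lambda(x)$ to absorb the $\partial$'s produced by the substitutions turns each of the three terms into an $R$-valued expression, bilinear in $(u,v)$ and polynomial in the two functional parameters. The crucial observation is that, after this partial contraction, the clause $\mathrm{mod}\ (\partial^{\otimes^3})$ becomes a single linear relation tying the $\partial$ on the surviving factor to (minus) the sum of the two functional parameters; fed into the $\lambda$-bracket rules and into $ad^\ast(\partial a)_\mu=-\mu\,ad^\ast(a)_\mu$, this relation is exactly what forces every a priori $\lambda$-shifted occurrence of $T^r$ down to $T^r_0$, and what produces the asymmetric subscripts $\lambda$ versus $-\lambda-\partial$ on the two $ad^\ast$ terms. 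Carrying this out: the term $[{a_i}_\mu a_j]\otimes b_i\otimes b_j$ collapses to $[T^r_0(u)_\lambda T^r_0(v)]$ (using the second form of $T^r$ in both bracket slots); the term $a_i\otimes[{a_j}_\mu b_i]\otimes b_j$ collapses to $-T^r_0(ad^\ast(T^r_0(v))_{-\lambda-\partial}u)$ (using the coadjoint formula and then the second form twice); and $a_i\otimes a_j\otimes[{b_j}_\mu b_i]$ collapses to $T^r_0(ad^\ast(T^r_0(u))_\lambda v)$ (using the coadjoint formula, the first form of $T^r$ inside the $ad^\ast$, and the second form for the output). Since $[[r,r]]$ is the first term minus the other two, its contracted image equals
\begin{equation*}
[T^r_0(u)_\lambda T^r_0(v)]+T^r_0(ad^\ast(T^r_0(v))_{-\lambda-\partial}u)-T^r_0(ad^\ast(T^r_0(u))_\lambda v),
\end{equation*}
which vanishes for all $u,v\in R^{\ast c}$ precisely when the asserted identity holds; by the separation property above this proves both implications at once.

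The main obstacle is the spectral-parameter bookkeeping, and within it the precise interpretation of $\mathrm{mod}\ (\partial^{\otimes^3})$: one must check that the partial contraction is well defined on $R\otimes R\otimes R/(\partial^{\otimes^3})$, i.e. that it carries $\partial^{\otimes^3}(R\otimes R\otimes R)$ into ``($\partial$ plus the two functional parameters) times the contracted image'', and then verify that this one relation is simultaneously responsible for the replacement $T^r_\lambda\rightsquigarrow T^r_0$ throughout and for the split $\lambda$ versus $-\lambda-\partial$ in the final identity. The only other delicate point is keeping the signs straight through the two uses of the skew-symmetry of $r$ and the single use of the skew-symmetry of the $\lambda$-bracket; once the contraction is normalized so as to match (\ref{eqn5}), the three-term matching is a direct, if lengthy, expansion.
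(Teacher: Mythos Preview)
Your proposal is correct and follows essentially the same strategy as the paper: convert the tensor identity $[[r,r]]\equiv 0\pmod{\partial^{\otimes 3}}$ into an operator identity by contracting with conformal functionals, using that on a free module of finite rank such pairings separate points, and that the clause $\text{mod}\ (\partial^{\otimes 3})$ becomes the single linear relation tying the surviving $\partial$ to the functional parameters (which is precisely what forces $T^r_\lambda\rightsquigarrow T^r_0$).

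The only tactical difference is a harmless one of packaging. The paper contracts \emph{all three} tensor factors against $u\otimes v\otimes w$ at parameters $(\lambda,\nu,\theta)$, obtains a scalar identity modulo $(\lambda+\nu+\theta)$, and then removes the pairing with $w$ at the end using non-degeneracy. You instead contract only the second and third factors and keep the first in $R$ throughout. Because of this different choice of ``free'' slot, the term-by-term matching is permuted: in the paper the bracket $[T^r_0(u)_\bullet T^r_0(v)]$ arises from the summand $a_i\otimes a_j\otimes[{b_j}_\mu b_i]$, whereas in your scheme it arises from $[{a_i}_\mu a_j]\otimes b_i\otimes b_j$; the two $ad^\ast$-terms are likewise swapped. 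Both routes arrive at the same identity, and neither is materially shorter or longer than the other.
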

\begin{proof}
Define $\langle a, u\rangle_\lambda=\{u,a\}_{-\lambda}=u_{-\lambda}(a)$ where $a\in R$ and $u\in R^{\ast c}$. Obviously, it is easy to see that
\begin{eqnarray}\label{kh1}
\langle \partial a, u\rangle_\lambda=-\lambda\langle a, u\rangle_\lambda.
\end{eqnarray}
Moreover, we define
$$\langle a\otimes b\otimes c, u\otimes v\otimes w\rangle_{(\lambda, \nu,\theta)}=\langle a,u\rangle_\lambda \langle b,v\rangle_\nu\langle c,w\rangle_\theta,$$
where $a$, $b$, $c\in R$, and $u$, $v$, $w\in R^{\ast c}$. Moreover, by the definition of dual representation in Remark \ref{rrm}, we can easily get
\begin{eqnarray}\label{x2}
\langle [a_\mu b], u\rangle_\lambda=\langle a, ad^\ast(b)_{\lambda-\partial} u\rangle_\mu,
\end{eqnarray}
for any $a$, $b\in R$, and $u\in R^{\ast c}$.

Set $r=\sum_{i} a_i\otimes b_i\in R\otimes R$.
By the discussion above, $T^r \in \text{Chom}(R^{\ast c},R)$ corresponding to $r$ is given by
$$T_{\lambda}^r(u)=\sum_i \langle a_i, u\rangle_{\lambda+\partial} b_i, ~~~~u\in R^{\ast c}.$$
Similarly, since $r$ is skew-symmetric, one can obtain
$T_{\lambda}^r(v)=-\sum_i \langle b_i, v\rangle_{\lambda+\partial} a_i, ~~~~v\in R^{\ast c}$.

Then we consider
\begin{eqnarray}
\label{q1}\langle [[r,r]]~~\text{mod}~(\partial^{\otimes^3}), u\otimes v\otimes w\rangle_{(\lambda,\nu,\theta)}=0 ~~~\text{ for any $u$, $v$, $w\in R^{\ast c}$.}
\end{eqnarray}
By Eq.~(\ref{kh1}), Eq.~(\ref{q1}) is equivalent to the following equality
\begin{eqnarray}
\label{q2}\langle [[r,r]], u\otimes v\otimes w\rangle_{(\lambda,\nu,\theta)}=0 ~~~~ \text{mod}~(\lambda+\nu+\theta) ~~~\text{ for any $u$, $v$, $w\in R^{\ast c}$.}
\end{eqnarray}
By a direct computation, we have
\begin{eqnarray*}
&&\langle \sum_{i,j}[{a_i}_\mu a_j]\otimes b_i\otimes b_j\mid_{\mu=1\otimes \partial \otimes 1}, u\otimes v\otimes w\rangle_{(\lambda,\nu,\theta)}\\
&&=\sum_{i,j}\langle [{a_i}_{-\nu} a_j], u\rangle_\lambda \langle b_i, v\rangle_\nu \langle b_j, w\rangle_\theta\\
&&=\sum_{i,j} \langle [(\langle b_i, v\rangle_\nu {a_i})_{-\nu} a_j], u\rangle_\lambda \langle b_j, w\rangle_\theta=-\sum_{i,j}\langle [T_0^r(v)_{-\nu} a_j], u\rangle_{\lambda}\langle b_j, w\rangle_\theta\\
&&=\sum_{i,j}\langle [{a_j}_{\lambda+\nu}T_0^r(v)],u\rangle_\lambda \langle b_j, w\rangle_\theta=\sum_{i,j}\langle a_j, ad^\ast (T_0^r(v))_{-\nu} u\rangle_{\lambda+\nu}
\langle b_j, w\rangle_\theta\\
&&= \sum_{i,j}\langle \langle a_j, ad^\ast (T_0^r(v))_{-\nu} u\rangle_{\lambda+\nu} b_j, w\rangle_\theta=\langle T_{\lambda+\nu-\partial}^r(ad^\ast (T_0^r(v))_{-\nu} u), w\rangle_\theta\\
&&=\langle T_{\lambda+\nu+\theta}^r(ad^\ast (T_0^r(v))_{-\nu} u), w\rangle_\theta.
\end{eqnarray*}
Similarly, one can get
\begin{eqnarray*}
\langle \sum_{i,j}
a_i\otimes[{a_j}_\mu b_i]\otimes b_j\mid_{\mu=1\otimes 1\otimes \partial}, u\otimes v\otimes w\rangle_{(\lambda,\nu,\theta)}
=\langle T_0^r(ad^\ast(T_{\lambda+\nu+\theta}^r(u))_{\nu+\theta} v), w \rangle_\theta,
\end{eqnarray*}
and
\begin{eqnarray*}
\langle \sum_{i,j}a_i\otimes a_j \otimes [{b_j}_\mu b_i]\mid_{\mu=1\otimes \partial\otimes 1}, u\otimes v\otimes w\rangle_{(\lambda,\nu,\theta)}
=-\langle [T_{\lambda+\nu+\theta}^r(u)_{\nu+\theta}T_0^r(v)], w\rangle_\theta.
\end{eqnarray*}
By Eq.~(\ref{q2}) and the above discussion, the conformal CYBE is equivalent to
\begin{gather}
\langle [T_{\lambda+\nu+\theta}^r(u)_{\nu+\theta}T_0^r(v)]-T_0^r(ad^\ast(T_{\lambda+\nu+\theta}^r(u))_{\nu+\theta} v)\nonumber\\
\label{q3}-T_{\lambda+\nu+\theta}^r(ad^\ast (T_0^r(v))_{-\nu} u), w\rangle_\theta=0 ~~~\text{mod}~~(\lambda+\nu+\theta).
\end{gather}
Therefore $(\ref{q3})$ is equivalent to the following equality
\begin{eqnarray}
\label{q4}[T_0^r(u)_{-\lambda} T_0^r(v)]=T_0^r(ad^\ast(T_{0}^r(u))_{-\lambda} v)+T_{0}^r(ad^\ast (T_0^r(v))_{\lambda-\partial} u).
\end{eqnarray}
Then  we get the conclusion replacing $-\lambda$ by $\lambda$.
\end{proof}

\begin{defi} \label{def:bl} {\rm (\cite{L})\quad
A {\bf conformal bilinear form} on $R$ is a $\mathbb{C}$-bilinear map
$\langle,\rangle_\lambda: R\times R\rightarrow \mathbb{C}[\lambda]$ satisfying
\begin{eqnarray}
\langle \partial a, b\rangle_\lambda =-\lambda\langle a,b \rangle_\lambda
=-\langle a,\partial b\rangle_\lambda~~~~~\text{ for all $a$, $b\in R$.}
\end{eqnarray}
If $\langle a, b\rangle_\lambda=\langle b,a\rangle_{-\lambda}$ for all
$a$, $b\in R$, we say this conformal bilinear form is {\bf symmetric}.
$\langle,\rangle_\lambda$ is called {\bf invariant} if for any $a$, $b$, $c\in R$,
\begin{eqnarray}
\langle [a_\mu b], c\rangle_\lambda=\langle a, [b_{\lambda-\partial}c]\rangle_\mu=-\langle a, [c_{-\lambda}b]\rangle_\mu.
\end{eqnarray}
Suppose $R$ is a free and of finite rank $\mathbb{C}[\partial]$-module. Given a conformal bilinear form on $R$. If the $\mathbb{C}[\partial]$-module homomorphism $L: R\rightarrow R^{\ast c},
a\rightarrow L_a$ given by $(L_a)_\lambda b=\langle a, b\rangle_\lambda$, $b\in R$, is an isomorphism, then  we call the bilinear form {\bf non-degenerate}.}
\end{defi}

Let $R$ have a non-degenerate conformal bilinear form. For any $a\otimes b$, $c\otimes d\in R\otimes R$, set
\begin{eqnarray}
\langle a\otimes b, c\otimes d\rangle_{(\lambda,\mu)}
=\langle a, c\rangle_\lambda \langle b, d\rangle_\mu.
\end{eqnarray}
Let $r=\sum_{i} a_i\otimes b_i\in R\otimes R$. Define a linear map $P^r: R\rightarrow R[\lambda]$ as
\begin{eqnarray}
\langle r, u\otimes v\rangle_{(\lambda,\mu)}=\langle P_{\lambda-\partial}^r(u), v\rangle_\mu.
\end{eqnarray}
It is easy to see that $P^r\in \text{Cend}(R)$.

\begin{cor}\label{t1}
Let $R$ be a finite Lie conformal algebra which is free as a $\mathbb{C}[\partial]$-module.
Suppose that there exists a non-degenerate symmetric invariant conformal bilinear form on $R$ and $r\in R\otimes R$ is skew-symmetric. Then $r$ is a solution
of conformal CYBE if and only if the element $P^r\in
\text{Cend}(R)$ corresponding to $r$ satisfies the following equality
\begin{eqnarray}
\label{et1}[P_0^r(a)_\lambda P_0^r(b)]=P_0^r([a_\lambda P_0^r(b)])+P_0^r([P_0^r(a)_\lambda b]),~~~~\forall a, b\in R,
\end{eqnarray}
where $P_0^r=P_\lambda^r\mid_{\lambda=0}$.
\end{cor}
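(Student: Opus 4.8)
The plan is to deduce Corollary \ref{t1} from Theorem \ref{tthe3} by transporting the operator $T^r$ along the isomorphism $L\colon R\to R^{\ast c}$ induced by the non-degenerate symmetric invariant conformal bilinear form. First I would unwind the definitions to see that $P^r = T^r \circ L$ (up to the usual $\partial$-shift bookkeeping): from $\langle r,u\otimes v\rangle_{(\lambda,\mu)} = \langle P^r_{\lambda-\partial}(u),v\rangle_\mu$ and $\{f,T^r_{-\mu-\partial}(g)\}_\lambda = \{g\otimes f,r\}_{(\mu,\lambda)}$, together with the pairing identity $\langle a,u\rangle_\lambda = u_{-\lambda}(a) = \{u,a\}_{-\lambda}$, one checks that for $a\in R$ one has $P^r_\lambda(a) = T^r_\lambda(L_a)$ after matching the spectral parameters. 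So the key step is establishing the dictionary
\begin{equation}
P^r_\lambda = T^r_\lambda \circ L, \qquad L\colon R\xrightarrow{\ \sim\ } R^{\ast c}.
\end{equation}

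Next I would translate the operator identity of Theorem \ref{tthe3} through $L$. The content there is
\begin{equation}
[T_0^r(u)_\lambda T_0^r(v)] = T_0^r\bigl(\ad^\ast(T_0^r(u))_\lambda v - \ad^\ast(T_0^r(v))_{-\lambda-\partial} u\bigr).
\end{equation}
Substituting $u = L_a$, $v = L_b$ and using $P^r_0(a) = T^r_0(L_a)$, the left side becomes $[P_0^r(a)_\lambda P_0^r(b)]$ immediately. For the right side I need the compatibility of $L$ with the adjoint and coadjoint actions, namely that $L$ intertwines $\ad$ on $R$ with $\ad^\ast$ on $R^{\ast c}$: concretely, invariance of $\langle,\rangle_\lambda$ should give
\begin{equation}
\ad^\ast(c)_\lambda L_a = L_{[c_\lambda a]}\quad\text{(up to the appropriate $\partial$-shift)},
\end{equation}
which follows by pairing both sides against an arbitrary $b\in R$ and invoking $\langle [c_\lambda a],b\rangle_\mu = -\langle a,[b_{-\mu}c]\rangle$ and the defining relation of $\ad^\ast$ from Remark \ref{rrm}. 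Granting this, $\ad^\ast(T_0^r(u))_\lambda v = \ad^\ast(P_0^r(a))_\lambda L_b = L_{[P_0^r(a)_\lambda b]}$ and similarly for the other term, so applying $T_0^r$ and using $T_0^r\circ L = P_0^r$ converts the right side of Theorem \ref{tthe3} exactly into $P_0^r([P_0^r(a)_\lambda b]) + P_0^r([a_\lambda P_0^r(b)])$, which is \eqref{et1}. Since $L$ is an isomorphism, the equivalence ``$r$ solves conformal CYBE $\iff$ \eqref{et1} holds'' follows from the equivalence already proved in Theorem \ref{tthe3}.

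I expect the main obstacle to be purely bookkeeping: getting every spectral-parameter shift ($\lambda \leftrightarrow -\lambda$, the $\partial$ versus $-\lambda-\partial$ conventions in $L_a$, in the pairing $\langle,\rangle_\lambda$, and in $T^r$) consistent, since symmetry of the form forces a sign flip in $\lambda$ relative to the pairing $\langle a,u\rangle_\lambda$ used in the proof of Theorem \ref{tthe3}, and the skew-symmetry of $r$ plus symmetry of $\langle,\rangle_\lambda$ must be seen to be compatible (i.e.\ the associated $P^r$ is well-defined and the two ``$ad^\ast$'' terms in Theorem \ref{tthe3} collapse to the symmetric-looking pair in \eqref{et1}). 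One clean way to organize this is to verify the single identity $P^r_\lambda(a) = T^r_\lambda(L_a)$ and the single intertwining identity for $L$ carefully, and then let the substitution be formal; I would not grind through the shift arithmetic in the writeup beyond what is needed to pin down these two identities. A short remark that symmetry of the form is exactly what makes the two correction terms symmetric in $a,b$ (so that \eqref{et1} takes the stated form rather than an asymmetric one) would also be worth including.
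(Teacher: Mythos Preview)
Your approach is correct and is precisely the one the paper takes: the paper's proof consists of the single observation that the non-degenerate symmetric invariant form makes $L\colon R\to R^{\ast c}$ a $\mathbb{C}[\partial]$-module isomorphism, after which the result ``can be directly obtained from Theorem~\ref{tthe3}''. Your write-up simply unpacks what ``directly obtained'' means---the dictionary $P^r_\lambda=T^r_\lambda\circ L$ and the intertwining $\ad^\ast(c)_\lambda L_a=L_{[c_\lambda a]}$ coming from invariance---which is exactly the right content, and your caveat that the only work lies in the spectral-parameter bookkeeping is accurate.
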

\begin{proof}
Since $R$ has a non-degenerate symmetric invariant conformal bilinear form,  $R^{\ast c}$ is isomorphic to  $R$ as a $\mathbb{C}[\partial]$-module through this conformal bilinear form. Then  it can be directly obtained from Theorem \ref{tthe3}.
\end{proof}

Note that $P_0^r$ is a $\mathbb{C}[\partial]$-module homomorphism, which
motivates us to give the following definition.

\begin{defi}\label{RB0}{\rm
Let $R$ be a Lie conformal algebra. If $T:R\rightarrow R$ is a $\mathbb{C}[\partial]$-module homomorphism satisfying
\begin{eqnarray}\label{oo3}
[T(a)_\lambda T(b)]=T([a_\lambda T(b)])+T([T(a)_\lambda b]),~~\forall~~\text{$a$, $b\in R$,}
\end{eqnarray}
then $T$ is called a {\bf Rota-Baxter operator (of weight 0) } on $R$.}
\end{defi}

\begin{defi} {\rm Let $R$ be a Lie conformal algebra and $\rho: R\rightarrow gc(V)$ be a representation. If a $\mathbb{C}[\partial]$-module homomorphism $T: V\rightarrow R$ satisfies
\begin{eqnarray}\label{eqn3}
[T(u)_\lambda T(v)]=T(\rho(T(u))_\lambda v-\rho(T(v))_{-\lambda-\partial}u),~~\forall~~u,~v\in V,
\end{eqnarray}
then $T$ is called an {\bf $\mathcal{O}$-operator} associated with $\rho$.}
\end{defi}
\begin{rmk}
{\rm By Theorem \ref{tthe3}, a skew-symmetric solution of conformal CYBE in $R$ is equivalent to  $T\in \text{Chom}(R^{\ast c},R)$ where $T_0$ is an $\mathcal{O}$-operator associated to $ad^\ast$ and $R$ is a finite Lie conformal algebra and free as a $\mathbb{C}[\partial]$-module.}
\end{rmk}

Next, we study the $\mathcal{O}$-operators of Lie conformal algebras
in a more general extent.

In the following, let $R$ and $V$ be free $\mathbb{C}[\partial]$-modules of finite ranks.
Let $\{e_i\}_{i=1}^n$ be a $\mathbb{C}[\partial]$-basis of $R$,
$\{v_j\}_{j=1}^m$ be a $\mathbb{C}[\partial]$-basis of $V$ and $\{v_j^\ast\}_{j=1}^m$ be the dual $\mathbb{C}[\partial]$-basis in $V^{\ast c}$. Then  there is
a natural representation $\rho^\ast: R\rightarrow gc(V^{\ast c})$ which is dual to $\rho$ given by
\begin{eqnarray}\label{eqn2}
\rho^\ast(e_i)_\lambda v_j^{\ast}=-\sum_{k=1}^m{v_j^\ast}_{-\lambda-\partial}
(\rho(e_i)_\lambda (v_k))v_k^\ast.
\end{eqnarray}

By Proposition \ref{prop1}, as $R$-modules, $\text{Chom}(V, R)\cong V^{\ast c}\otimes R\cong R\otimes V^{\ast c}$. Therefore through this isomorphism, any conformal linear map
$T\in \text{Chom}(V, R)$ corresponds to an element $r_{T}\in
R\otimes V^{\ast c}\in R\ltimes_{\rho^\ast}V^{\ast c}\otimes R\ltimes_{\rho^\ast}V^{\ast c}$. Set $T_\lambda(v_i)=\sum_{j=1}^na_{ij}(\lambda,\partial)e_j$ for $i=1$,
$\cdots$, $m$. Then  by the definition of the isomorphism, we get
\begin{eqnarray}
\label{eqn1}r_T=\sum_{i=1}^m\sum_{j=1}^n a_{ij}(-1\otimes \partial-\partial\otimes 1,\partial\otimes 1)e_j\otimes v_i^\ast.
\end{eqnarray}

\begin{thm}\label{t2}
With the conditions above, $r=r_T-r_T^{21}$ is a solution of conformal CYBE in $R\ltimes_{\rho^\ast}V^{\ast c}$ if and only if
for $T\in \text{Chom}(V, R)$, $T_0=T_\lambda\mid_{\lambda=0}$ is an $\mathcal{O}$-operator.
\end{thm}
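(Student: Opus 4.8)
\textbf{The plan} is to deduce this from Theorem~\ref{tthe3}. Set $E:=R\ltimes_{\rho^\ast}V^{\ast c}$. Since $R$ and $V$ are free of finite rank over $\mathbb C[\partial]$, so is $E$ (with $\mathbb C[\partial]$-basis $\{e_i\}\cup\{v_j^\ast\}$), and the element $r:=r_T-r_T^{21}\in E\otimes E$ is skew-symmetric by construction. Hence Theorem~\ref{tthe3} applies and tells us that $r$ solves the conformal CYBE in $E$ if and only if the associated $T^r\in\text{Chom}(E^{\ast c},E)$ satisfies
\[ [T^r_0(p)_\lambda T^r_0(q)]_E=T^r_0\big(ad^\ast(T^r_0(p))_\lambda q-ad^\ast(T^r_0(q))_{-\lambda-\partial}p\big),\qquad p,q\in E^{\ast c}, \]
where the bracket and $ad^\ast$ are those of the Lie conformal algebra $E$ and its coadjoint module. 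So the whole problem becomes one of rewriting this identity in terms of $T_0$ and $\rho$. First I would record the $\mathbb C[\partial]$-module identification $E^{\ast c}\cong R^{\ast c}\oplus(V^{\ast c})^{\ast c}\cong R^{\ast c}\oplus V$, using the canonical $V\cong(V^{\ast c})^{\ast c}$ recalled in Section~2.

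Next I would identify $T^r_0$ under these decompositions. Using $T^r_\lambda(u)=\sum_i\{u,a_i\}_{-\lambda-\partial}b_i$ together with the explicit form of $r_T$ in \eqref{eqn1}, one sees that $T^r_0$ is ``block off-diagonal'': it sends $V\subseteq E^{\ast c}$ into $R\subseteq E$ and $R^{\ast c}\subseteq E^{\ast c}$ into $V^{\ast c}\subseteq E$, with $T^r_0|_V$ equal to $T_0$ up to an overall sign and $T^r_0|_{R^{\ast c}}$ equal to the conformal adjoint of $T_0$ up to sign (this last symmetry is exactly what the skew-symmetry $r=-r^{21}$ encodes). Then I would compute the coadjoint action of $E$ on $E^{\ast c}=R^{\ast c}\oplus V$: because $V^{\ast c}$ is an abelian ideal of $E$, an element $a\in R$ acts diagonally, as $ad^\ast(a)$ on $R^{\ast c}$ and as $\rho(a)$ on $V$ — here one uses that the contragredient of $\rho^\ast$ is $\rho$ — while an element of $V^{\ast c}$ annihilates $R^{\ast c}$ and maps $V$ into $R^{\ast c}$ through the pairing built from $\rho$. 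Substituting $p=u+x$, $q=u'+x'$ with $u,u'\in R^{\ast c}$ and $x,x'\in V$ and comparing the $R$- and $V^{\ast c}$-components of $E$, the $R$-component of the displayed identity becomes
\[ [T^r_0(x)_\lambda T^r_0(x')]_R=T^r_0\big(\rho(T^r_0(x))_\lambda x'-\rho(T^r_0(x'))_{-\lambda-\partial}x\big), \]
and since a $\mathbb C[\partial]$-module homomorphism is an $\mathcal O$-operator if and only if its negative is, this is precisely Eq.~\eqref{eqn3} for $T_0$ and $\rho$.

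It then remains to check that the $V^{\ast c}$-component of the Theorem~\ref{tthe3} identity is redundant: it should be the image of the $R$-component under the conformal adjoint, using once more that the contragredient of $\rho^\ast$ is $\rho$ and the skew-symmetry of $r$, so it holds automatically whenever $T_0$ is an $\mathcal O$-operator and imposes nothing beyond \eqref{eqn3} in the converse direction. \textbf{The main obstacle} I anticipate is purely bookkeeping: carefully tracking the $\partial$-actions and the conformal parameters through the two nested duals $R^{\ast c}$ and $(V^{\ast c})^{\ast c}$, through the semidirect bracket of $E$ and its coadjoint representation, and in particular pinning down all parameter shifts so that the ``mod $\partial^{\otimes^3}$'' clause of the conformal CYBE is matched by the ``only $T_0$, not $T_\lambda$'' feature already built into Theorem~\ref{tthe3}. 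A more self-contained alternative would avoid $E^{\ast c}$ altogether and expand $[[r,r]]$ directly from the semidirect bracket, pairing against $E^{\ast c}\otimes E^{\ast c}\otimes E^{\ast c}$ exactly as in the proof of Theorem~\ref{tthe3}; this trades the computation of $ad^\ast$ for $E$ for a longer but routine direct expansion.
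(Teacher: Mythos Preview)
Your approach is sound but genuinely different from the paper's. The paper does not invoke Theorem~\ref{tthe3} at all: it writes out $r=r_T-r_T^{21}$ in coordinates, expands $[[r,r]]$ directly using the semidirect bracket of $E=R\ltimes_{\rho^\ast}V^{\ast c}$, and then uses the explicit formula~\eqref{eqn2} for $\rho^\ast$ to rewrite each block of terms (e.g.\ $\sum_{i,k}T_0(v_i)\otimes\rho^\ast(T_0(v_k))_\mu v_i^\ast\otimes v_k^\ast$) until the whole expression visibly equals $\sum_{i,k}(\text{$\mathcal O$-operator defect})\otimes v_i^\ast\otimes v_k^\ast$ and its permutations. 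No pairing with $E^{\ast c}$, no coadjoint action of $E$, no double dual $(V^{\ast c})^{\ast c}$ --- just a long but mechanical expansion in $E^{\otimes 3}$ modulo $\partial^{\otimes^3}$.

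Your route is conceptually cleaner: it explains \emph{why} the result holds, namely because under $E^{\ast c}\cong R^{\ast c}\oplus V$ the map $T^r_0$ is block off-diagonal with the $V\to R$ block equal to $\pm T_0$, so the abstract $\mathcal O$-operator condition of Theorem~\ref{tthe3} for $(E,ad^\ast)$ restricts on the $V$-block to the $\mathcal O$-operator condition for $(R,\rho)$. The cost is exactly the step you flag as hand-waved: that the remaining components --- the mixed case $p\in R^{\ast c}$, $q\in V$, which lands in $V^{\ast c}$ --- impose nothing new. This \emph{is} true (skew-adjointness of $T^r_0$ lets you dualise the $R$-valued identity to the $V^{\ast c}$-valued one), but it is the one place where genuine work beyond bookkeeping is required, and it needs an honest computation of how $ad^\ast(\phi)$ for $\phi\in V^{\ast c}$ intertwines with the conformal adjoint of $T_0$. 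The paper's direct expansion sidesteps this entirely at the price of a page of index-chasing; your argument, once that redundancy step is written out, would be shorter and more structural.
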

\begin{proof}
By Eq.~(\ref{eqn1}), we get
\begin{eqnarray*}
r=\sum_{i,j}a_{ij}(-\partial\otimes 1-1\otimes \partial,\partial\otimes 1)e_j\otimes v_i^\ast-\sum_{i,j}a_{ij}(-\partial\otimes 1-1\otimes \partial,1\otimes \partial)v_i^\ast \otimes e_j.
\end{eqnarray*}
Then by a direct computation, we get
\begin{eqnarray*}
[[r,r]]
&=&(\sum_{i,j,k,l} a_{ij}(0,-1\otimes \partial\otimes 1)a_{kl}(0,-1\otimes 1\otimes \partial)[{e_j}_\mu e_l]\otimes v_i^\ast \otimes v_k^\ast\\
&&-\sum_{i,j,k,l}a_{ij}(0,1\otimes \partial \otimes 1) a_{kl}(0,-1\otimes 1\otimes \partial)[{v_i^\ast}_\mu e_l]\otimes e_j\otimes v_k^\ast\\
&&-\sum_{i,j,k,l}a_{ij}(0,-1\otimes \partial \otimes 1)a_{kl}(0,1\otimes 1\otimes \partial)[{e_j}_\mu v_k^\ast]\otimes v_i^\ast \otimes e_l)\mid_{\mu=1\otimes \partial \otimes 1}\\
&&-(\sum_{i,j,k,l}a_{ij}(0,\partial\otimes 1\otimes 1)a_{kl}(0,-1\otimes 1\otimes \partial)e_j\otimes [{e_l}_\mu v_i^\ast] \otimes v_k^\ast\\
&&-\sum_{i,j,k,l}a_{ij}(0,-\partial\otimes 1\otimes 1)a_{kl}(0,-1\otimes 1\otimes \partial)v_i^\ast\otimes [{e_l}_\mu e_j]\otimes v_k^\ast\\
&&+\sum_{i,j,k,l}a_{ij}(0,-\partial\otimes 1\otimes 1)a_{kl}(0,1\otimes 1\otimes \partial)v_i^\ast\otimes [{v_k^\ast}_\mu e_j]\otimes v_l)\mid_{\mu=1\otimes 1\otimes \partial}\\
&&-(-\sum_{i,j,k,l}a_{ij}(0,\partial\otimes 1\otimes 1)a_{kl}(0,-1\otimes \partial\otimes 1)e_j\otimes v_k^\ast\otimes [{e_l}_\mu v_i^\ast]\\
&&-\sum_{i,j,k,l}a_{ij}(0,-\partial\otimes 1\otimes 1)a_{kl}(0,1\otimes \partial\otimes 1)v_i^\ast\otimes e_l\otimes [{v_k^\ast}_\mu e_j]\\
&&+\sum_{i,j,k,l}a_{ij}(0,-\partial\otimes 1\otimes 1)a_{kl}(0,-1\otimes \partial\otimes 1)v_i^\ast\otimes v_k^\ast\otimes [{e_l}_\mu e_j])\mid_{\mu=1\otimes \partial \otimes 1}~~~\text{mod}~~(\partial^{\otimes^3}).
\end{eqnarray*}
Note that $T_0(v_i)=\sum_{j=1}^na_{ij}(0,\partial)e_j$.
Thus we obtain
\begin{eqnarray*}
[[r,r]]
&=&\sum_{i,k}([T_0(v_i)_\mu T_0(v_k)]\otimes v_i^\ast \otimes v_k^\ast
+\rho^\ast(T_0(v_k))_{-\mu-\partial}v_i^\ast\otimes T_0(v_i)\otimes v_k^\ast\\
&&-\rho^\ast(T_0(v_i))_{\mu}v_k^\ast\otimes v_i^\ast\otimes T_0(v_k))\mid_{\mu=1\otimes \partial \otimes 1}\\
&&-\sum_{i,k}(T_0(v_i)\otimes \rho^\ast(T_0(v_k))_{\mu}v_i^\ast\otimes v_k^\ast
-v_i^\ast\otimes [T_0(v_k)_\mu T(v_i)]\otimes v_k^\ast\\
&&-v_i^\ast\otimes \rho^\ast(T_0(v_i))_{-\mu-\partial}v_k^\ast\otimes T(v_k))\mid_{\mu=1\otimes 1\otimes \partial}\\
&&-\sum_{i,k}(-T_0(v_i)\otimes v_k^\ast\otimes \rho^\ast(T_0(v_k))_{\mu}v_i^\ast
+v_i^\ast\otimes T_0(v_k)\otimes \rho^\ast(T_0(v_i))_{-\mu-\partial}v_k^\ast\\
&&+v_i^\ast \otimes v_k^\ast \otimes [T_0(v_k)_\mu T_0(v_i)])\mid_{\mu=1\otimes \partial \otimes 1}~~~~~\text{mod} ~~(\partial^{\otimes^3}).
\end{eqnarray*}
Moreover, by Eq.~(\ref{eqn2}) and the fact that $T_0$ commutes with $\partial$, we get
\begin{eqnarray*}
&&\sum_{i,k}T_0(v_i)\otimes \rho^\ast(T_0(v_k))_{\mu}v_i^\ast\otimes v_k^\ast\mid_{\mu=1\otimes 1\otimes \partial}\\
&=&-\sum_{i,k}T_0(v_i)\otimes \sum_{j}{v_i^\ast}_{-\mu-\partial}(\rho(T_0(v_k))_\mu v_j)v_j^\ast\otimes v_k^\ast\mid_{\mu=1\otimes 1\otimes \partial}\\
\end{eqnarray*}
\begin{eqnarray*}
&=&-\sum_{i,j,k}{v_j^\ast}_\partial(\rho(T_0(v_k))_\mu(v_i))T_0(v_j)\otimes v_i^\ast\otimes v_k^\ast\mid_{\mu=1\otimes 1\otimes \partial}\\
&=&-\sum_{i,k} T_0(\sum_{j}({v_j^\ast}_\partial(\rho(T_0(v_k))_\mu(v_i)))v_j)\otimes v_i^\ast\otimes v_k^\ast\mid_{\mu=1\otimes 1\otimes \partial}\\
&=&-\sum_{i,k}T_0(\rho(T_0(v_k))_\mu(v_i))\otimes v_i^\ast\otimes v_k^\ast\mid_{\mu=1\otimes 1\otimes \partial}.
\end{eqnarray*}
Therefore by a similar study, we get
\begin{eqnarray*}
&&[[r,r]]~~~~~~\text{mod}~~(\partial^{\otimes^3})\\
&=&\sum_{i,k}((-[T_0(v_k)_\mu T_0(v_i)]+T_0(\rho(T_0(v_k))_\mu(v_i))+T_0(\rho(T_0(v_i))_{-\mu-\partial}(v_k)))\otimes v_i^\ast\otimes v_k^\ast\mid_{\mu=1\otimes 1\otimes \partial}\\
&&+(v_i^\ast\otimes (-T_0(\rho(T_0(v_k))_\mu(v_i))+[T_0(v_k)_\mu T_0(v_i)]
+T_0(\rho(T_0(v_i))_{-\mu-\partial}(v_k)))\otimes v_k^\ast)\mid_{\mu=1\otimes 1\otimes \partial}\\
&&+\sum_{i,k}(v_i^\ast\otimes v_k^\ast\otimes (-T_0(\rho(T_0(v_k))_\mu(v_i))+[T_0(v_k)_\mu T_0(v_i)]
+T_0(\rho(T_0(v_i))_{-\mu-\partial}(v_k)))\mid_{\mu=1\otimes \partial\otimes 1}\\
&=&0.
\end{eqnarray*}
Therefore $r$ is a solution of conformal CYBE if and only if $T_0$ satisfies
Eq.~(\ref{eqn3}).
\end{proof}

\begin{rmk}{\rm In fact, from the proof, when $r=r_T-r_T^{21}$ is replaced by
$$r=\sum_{i,j}b_{i,j}(-\partial^{\otimes^2},\partial\otimes 1)e_j\otimes v_i^\ast-\sum_{i,j}c_{i,j}(-\partial^{\otimes^2},1\otimes \partial) v_i^\ast \otimes e_j,$$ where $b_{i,j}(0,\partial)=c_{i,j}(0,\partial)=a_{i,j}(0,\partial)$, the conclusion still holds.}
\end{rmk}
\begin{rmk}\label{rem1}{\rm
For any $T\in \text{Chom}(V,R)$,  set $T_\lambda =
T_0+\lambda T_1+\cdots +\lambda^n T_n$ where $T_i(V)\subset R$. Suppose
$T_0$ satisfies Eq.~(\ref{eqn3}). Obviously, $T_0=0$ is an $\mathcal{O}$-operator. Then
by Theorem \ref{t2}, no matter what $T_1$, $\cdots$, $T_n$ are,
the element $r=r_T-r^{21}_T \in R\ltimes_{\rho^\ast}V^{\ast c}\otimes R\ltimes_{\rho^\ast}V^{\ast c}$ where $r_T\in
R\otimes V^{\ast c}$ corresponds to $T_\lambda$ are all
solutions of conformal CYBE in $R\ltimes_{\rho^\ast}V^{\ast c}$.

Assume that $r_T$ is given by (\ref{eqn1}) and $r=r_T-r_T^{21}$. The Lie conformal bialgebra structures are obtained through these solutions of conformal CYBE as follows (\cite{L}).
\begin{eqnarray*}
\delta(a)&=&a_\lambda r\mid_{\lambda=-\partial^{\otimes^2}}\\
&=&a_\lambda (\sum_{i,j}a_{ij}(-\partial\otimes 1-1\otimes \partial,\partial\otimes 1)e_j\otimes v_i^\ast\\
&&-\sum_{i,j}a_{ij}(-\partial\otimes 1-1\otimes \partial,1\otimes \partial)v_i^\ast \otimes e_j)\mid_{\lambda=-\partial^{\otimes^2}}\\
&=&a_\lambda (\sum_{i,j}a_{ij}(0,\partial\otimes 1)e_j\otimes v_i^\ast-\sum_{i,j}a_{ij}(0,1\otimes \partial)v_i^\ast \otimes e_j)\mid_{\lambda=-\partial^{\otimes^2}}\\
&=& a_\lambda(r_{T_0}-r_{T_0}^{21})\mid_{\lambda=-\partial^{\otimes^2}}.
\end{eqnarray*}
Therefore the solutions of conformal CYBE corresponding to $T_\lambda=\lambda T_1+\cdots +\lambda^n T_n$ do not take effect here.
Hence in the sense of Lie conformal bialgebras, the unique useful solution corresponding to $T_\lambda$ is determined by the $\mathcal{O}$-operator $T_0$.}
\end{rmk}

Finally, let us study the relation between the non-degenerate skew-symmetric solutions of conformal CYBE and 2-cocycles of Lie conformal algebras.

\begin{defi}\label{def11}{\rm
Let $R$ be a finite Lie conformal algebra which is free as a $\mathbb{C}[\partial]$-module. If $T_\lambda^r=T_0^r$ defined by Eq.~(\ref{eqn5}) is a $\mathbb{C}[\partial]$-module isomorphism from $R^{\ast c}$ to $R$, then
$r$ is called {\bf non-degenerate}. Note that in this case, Eq.~(\ref{eqn5})
becomes
\begin{eqnarray}
\{ f, T_0^r(g)\}_\lambda =\{ g\otimes f, r\}_{(-\lambda,\lambda)}~~~~~~~~~~~~\text{$f$, $g\in R^{\ast c}$.}
\end{eqnarray}}
\end{defi}

\begin{defi}{\rm Let $R$ be a Lie conformal algebra. The $\mathbb{C}$-linear map
$\alpha_\lambda: R\otimes R\rightarrow \mathbb{C}[\lambda]$ is called a {\bf 2-cocycle}
of $R$ if $\alpha_\lambda$ satisfies the following conditions
\begin{eqnarray}
\label{eqn6}\alpha_\lambda(\partial a,b)=-\lambda \alpha_\lambda(a,b),~~\alpha_\lambda(a,\partial b)=\lambda \alpha_\lambda(a,b),\\
\label{eqn7}\alpha_\lambda(a,b)=-\alpha_{-\lambda}(b,a),\\
\label{eqn8}\alpha_\lambda(a,[b_\mu c])-\alpha_\mu(b,[a_\lambda c])
=\alpha_{\lambda+\mu}([a_\lambda b],c),
\end{eqnarray}
where $a$, $b$, $c\in R$.}
\end{defi}
\begin{thm}\label{th2} Let $R$ be a Lie conformal algebra. Then
$r\in R\otimes R$ is a skew-symmetric and non-degenerate solution of conformal
CYBE in $R$ if and only if the bilinear form defined by
\begin{eqnarray}
\alpha_\lambda(a,b)=\{ (T_0^r)^{-1}(a), b\}_\lambda,~~~~~~~a,~~b\in R,
\end{eqnarray}
is a 2-cocycle on $R$, where $T_0^r\in \text{Chom}(R^{\ast c}, R)$ is the element corresponding to
$r$ through the isomorphism $R\otimes R\cong \text{Chom}(R^{\ast c}, R)$.
\end{thm}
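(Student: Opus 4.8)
The plan is to reduce the statement to the $\mathcal{O}$-operator characterization of conformal CYBE obtained in Theorem~\ref{tthe3}, and then to ``conjugate away'' the isomorphism $T_0^r\colon R^{\ast c}\to R$ supplied by the non-degeneracy hypothesis. Write $T:=T_0^r$; by Definition~\ref{def11}, in the non-degenerate case $\{f,T(g)\}_\lambda=\{g\otimes f,r\}_{(-\lambda,\lambda)}$, that is $f_\lambda(T(g))=\sum_i g_{-\lambda}(a_i)f_\lambda(b_i)$ for all $f,g\in R^{\ast c}$, where $r=\sum_i a_i\otimes b_i$. Throughout I will use that $T^{-1}$ is a $\mathbb{C}[\partial]$-module homomorphism and that every $\varphi\in R^{\ast c}$ satisfies $(\partial\varphi)_\lambda=-\lambda\varphi_\lambda$ and $\varphi_\lambda(\partial x)=\lambda\varphi_\lambda(x)$. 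Since $\alpha_\lambda(a,b)=(T^{-1}(a))_\lambda(b)$, the task is to match each of the three defining conditions of a 2-cocycle with, respectively, an automatic identity, skew-symmetry of $r$, and conformal CYBE.

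First I would dispose of condition~(\ref{eqn6}): because $T^{-1}(\partial a)=\partial T^{-1}(a)$, both equalities in~(\ref{eqn6}) follow immediately from the two displayed properties of elements of $R^{\ast c}$, with no hypothesis on $r$. Next, condition~(\ref{eqn7}) is equivalent to $r=-r^{21}$. Indeed, skew-symmetry of $r$ is equivalent to $\sum_i g_{-\lambda}(a_i)f_\lambda(b_i)=-\sum_i g_{-\lambda}(b_i)f_\lambda(a_i)$ for all $f,g\in R^{\ast c}$; applying the defining relation of $T$ once with $(f,g,\lambda)$ and once with $(g,f,-\lambda)$ rewrites this as $f_\lambda(T(g))=-g_{-\lambda}(T(f))$, and substituting $a=T(g)$, $b=T(f)$ turns it into $\alpha_\lambda(a,b)=-\alpha_{-\lambda}(b,a)$. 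Both directions are genuine equivalences because the canonical pairings are non-degenerate for $R$ free of finite rank (cf. Proposition~\ref{prop1}).

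The core is the cocycle identity~(\ref{eqn8}). By Theorem~\ref{tthe3}, once $r$ is skew-symmetric, $r$ solves conformal CYBE if and only if $[T(u)_\lambda T(v)]=T\big(ad^\ast(T(u))_\lambda v-ad^\ast(T(v))_{-\lambda-\partial}u\big)$; applying $T^{-1}$ and writing $a=T(u)$, $b=T(v)$ this becomes
\[
T^{-1}([a_\lambda b])=ad^\ast(a)_\lambda T^{-1}(b)-ad^\ast(b)_{-\lambda-\partial}T^{-1}(a).
\]
Now I would pair both sides with an arbitrary $c\in R$ at conformal parameter $\lambda+\mu$: the left side is $\alpha_{\lambda+\mu}([a_\lambda b],c)$. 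For the right side I would invoke the dual-representation formula of Remark~\ref{rrm}, namely $(ad^\ast(x)_\nu\varphi)_\theta(y)=-\varphi_{\theta-\nu}([x_\nu y])$. The first term at $(\nu,\theta)=(\lambda,\lambda+\mu)$ gives $-(T^{-1}(b))_\mu([a_\lambda c])=-\alpha_\mu(b,[a_\lambda c])$; in the second term the substitution $\nu=-\lambda-\partial$ acts on the $R^{\ast c}$-valued coefficient, so that $\partial$ is replaced by the outer parameter $\lambda+\mu$ (hence $\nu$ by $\mu$) upon evaluation at $c$, yielding $-(T^{-1}(a))_\lambda([b_\mu c])=-\alpha_\lambda(a,[b_\mu c])$. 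Combining, $\alpha_{\lambda+\mu}([a_\lambda b],c)=\alpha_\lambda(a,[b_\mu c])-\alpha_\mu(b,[a_\lambda c])$, which is exactly~(\ref{eqn8}); since $c,\lambda,\mu$ are arbitrary and each step reverses, this is an equivalence. Together with the previous paragraph, and noting that $\alpha$ being skew-symmetric forces $r$ to be skew-symmetric (so that Theorem~\ref{tthe3} applies in the ``if'' direction), this yields the theorem in both directions.

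I expect the one genuinely delicate point to be the bookkeeping in the second term above: tracking how the substitution $\nu\mapsto-\lambda-\partial$ interacts with the $\mathbb{C}[\partial]$-action on $R^{\ast c}$ and with the outer evaluation at $\lambda+\mu$, so that the final variables line up with the $(\lambda,\mu,\lambda+\mu)$ pattern demanded by~(\ref{eqn8}) rather than some shifted pattern — this is where a sign or a shift is easiest to lose. The only other care needed is the routine passage from ``identity valid after pairing against every $c$ at every parameter'' to an identity of elements of $R^{\ast c}$ (or of $R\otimes R$ in the skew-symmetry step), which again uses that $R$ is free of finite rank.
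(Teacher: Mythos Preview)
Your proposal is correct and follows essentially the same route as the paper's proof: both reduce the cocycle identity~(\ref{eqn8}) to the $\mathcal{O}$-operator characterization of Theorem~\ref{tthe3} by pairing against an arbitrary element, using the dual-representation formula (what you cite from Remark~\ref{rrm} is exactly the paper's identity~(\ref{x2}) in different notation), and both handle~(\ref{eqn6}) as automatic and~(\ref{eqn7}) as equivalent to $r=-r^{21}$ via the pairing relation~(\ref{x1}). The only cosmetic difference is direction: the paper substitutes $a=T(f),\,b=T(g),\,c=T(h)$ into~(\ref{eqn8}) and unwinds toward $\{h,\,\cdot\,\}_{-\lambda-\mu}$ of the $\mathcal{O}$-operator defect, whereas you start from the $\mathcal{O}$-operator identity, apply $T^{-1}$, and then pair against $c$ at $\lambda+\mu$; your flagged ``delicate point'' about $-\lambda-\partial\mapsto\mu$ under evaluation is handled correctly.
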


\begin{proof}
Obviously, $\alpha_\lambda$ satisfies Eq.~(\ref{eqn6}). Since
$T_0^r$ is a $\mathbb{C}[\partial]$-module isomorphism from $R^{\ast c}$
to $R$, there exist $f$ and $g\in R^{\ast c}$ such that
$T_0^r(f)=a$ and $T_0^r(g)=b$. Therefore by the correspondence of
$r$ and $T_0^r$,
\begin{eqnarray}
\alpha_\lambda(T_0^r(f),T_0^r(g))=\{ f, T_0^r(g)\}_\lambda=\{ g\otimes f ,r \}_{(-\lambda,\lambda)}.
\end{eqnarray}
Since $r=-r^{21}$, we get
\begin{eqnarray*}
\alpha_\lambda(T_0^r(f),T_0^r(g))&=&\{ g\otimes f ,r \}_{(-\lambda,\lambda)}
=\{f \otimes g ,r^{21} \}_{(\lambda,-\lambda)}
=-\{ f \otimes g ,r \}_{(\lambda,-\lambda)}=-\{ g, T_0^r(f)\}_{-\lambda}\\
&=&-\alpha_{-\lambda}(T_0^r(g),T_0^r(f)).
\end{eqnarray*}
Therefore $\alpha_\lambda(a,b)=-\alpha_{-\lambda}(b,a)$ for any $a$, $b\in R$.
Conversely, it is also easy to see that if Eq.~(\ref{eqn7}) holds,
$r=-r^{21}$. Moreover, note that
\begin{eqnarray}\label{x1}
\{ f, T_0^r(g)\}_\lambda=-\{ g, T_0^r(f)\}_{-\lambda},~~~f,~g\in R^{\ast c}.
\end{eqnarray}

By Theorem~ \ref{tthe3},  $r$ is a solution of
conformal CYBE in $R$ if and only if $T_0^r$ is an $\mathcal{O}$-operator.
 Therefore we only need to show that Eq.~(\ref{eqn8}) is equivalent to that $T_0^r$ is an $\mathcal{O}$-operator.
Replacing $a$, $b$, $c$ by $T_0^r(f)$, $T_0^r(g)$ and $T_0^r(h)$ respectively in Eq.~(\ref{eqn8}) and according to Eq.~(\ref{x1}) and Eq.~(\ref{x2}), we get
\begin{eqnarray*}
&&\alpha_\lambda(T_0^r(f),[T_0^r(g)_\mu T_0^r(h)])-\alpha_\mu(T_0^r(g),[T_0^r(f)_\lambda T_0^r(h)])
-\alpha_{\lambda+\mu}([T_0^r(f)_\lambda T_0^r(g)],T_0^r(h))\\
&=&\{ f, [T_0^r(g)_\mu T_0^r(h)]\}_\lambda -\{ g, [T_0^r(f)_\lambda T_0^r(h)]\}_\mu+\{ h, [T_0^r(f)_\lambda T_0^r(g)]\}_{-\lambda-\mu}\\
&=&-\{ ad^\ast (T_0^r(g))_\mu f, T_0^r(h)\}_{\lambda+\mu}
+\{ad^\ast (T_0^r(f))_\lambda g, T_0^r(h)\}_{\lambda+\mu}
+\{ h, [T_0^r(f)_\lambda T_0^r(g)]\}_{-\lambda-\mu}\\
&=&\{ h, T_0^r(ad^\ast (T_0^r(g))_\mu f)\}_{-\lambda-\mu}
-\{ h, T_0^r( ad^\ast (T_0^r(f)_\lambda) g)\}_{-\lambda-\mu}
+\{h, [T_0^r(f)_\lambda T_0^r(g)]\}_{-\lambda-\mu}\\
&=& \{ h, [T_0^r(f)_\lambda T_0^r(g)]-T_0^r(ad^\ast ( T_0^r(f))_\lambda g)+T_0^r(ad^\ast (T_0^r(g))_{-\lambda-\partial} f)    \}_{-\lambda-\mu}.
\end{eqnarray*}
Since $h\in R^{\ast c}$ is arbitrary, Eq.~(\ref{eqn8}) is equivalent to that
$T_0^r$ is an $\mathcal{O}$-operator. This completes the proof.
\end{proof}

\section{Conformal CYBE and left-symmetric conformal algebras}
In this section, we investigate the relation between conformal CYBE and left-symmetric conformal algebras.

\begin{pro}\label{ll1}
There is a compatible left-symmetric conformal algebra structure on a Lie conformal algebra $R$ if and only if there exists a bijective $\mathcal{O}$-operator $T: V\rightarrow R$  associated with a certain representation $\rho$.
\end{pro}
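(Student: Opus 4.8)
The plan is to set up an explicit two-way correspondence. For the ``only if'' direction, suppose $A$ is a compatible left-symmetric conformal algebra structure on $R$, so that $R=\mathfrak{g}(A)$ with $\lambda$-bracket $[a_\lambda b]=a_\lambda b-b_{-\lambda-\partial}a$ as in Eq.~(\ref{3}). By the corollary in Section~2, the map $L_A(a)_\lambda b=a_\lambda b$ is the regular representation of $\mathfrak{g}(A)=R$ on $V=A$. I claim $T=\mathrm{id}\colon A\to R$ is then a bijective $\mathcal{O}$-operator associated with $\rho=L_A$: indeed
\[
T\big(L_A(T(a))_\lambda b-L_A(T(b))_{-\lambda-\partial}a\big)=a_\lambda b-b_{-\lambda-\partial}a=[a_\lambda b],
\]
which is precisely the defining identity Eq.~(\ref{eqn3}). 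So this direction is immediate once $L_A$ is recalled.

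For the ``if'' direction, let $T\colon V\to R$ be a bijective $\mathcal{O}$-operator associated with a representation $\rho\colon R\to\text{gc}(V)$. I would first equip $V$ with the $\lambda$-product $u*_\lambda v:=\rho(T(u))_\lambda v$ and show $(V,*)$ is a left-symmetric conformal algebra. Conformal sesquilinearity follows from (LM1), from $\rho$ being a $\mathbb{C}[\partial]$-module homomorphism (hence $\rho(\partial a)_\lambda=-\lambda\rho(a)_\lambda$), and from $T\partial=\partial T$. For the left-symmetry axiom I would compute $(u*_\lambda v)*_{\lambda+\mu}w$ by rewriting $T(u*_\lambda v)=[T(u)_\lambda T(v)]+T(v*_{-\lambda-\partial}u)$ using Eq.~(\ref{eqn3}) and then applying $\rho(\cdot)_{\lambda+\mu}w$: the $[T(u)_\lambda T(v)]$ part contributes $u*_\lambda(v*_\mu w)-v*_\mu(u*_\lambda w)$ by axiom (LM2), while $\rho\big(T(v*_{-\lambda-\partial}u)\big)_{\lambda+\mu}w$ equals $(v*_\mu u)*_{\lambda+\mu}w$, using $T\partial=\partial T$ and the elementary identity $\rho\big((-\partial-\lambda)^n r\big)_{\lambda+\mu}w=\mu^n\rho(r)_{\lambda+\mu}w$ (itself a consequence of $\rho(\partial r)_\nu=-\nu\rho(r)_\nu$). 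Adding these gives $(u*_\lambda v)*_{\lambda+\mu}w-u*_\lambda(v*_\mu w)=(v*_\mu u)*_{\lambda+\mu}w-v*_\mu(u*_\lambda w)$, which is the left-symmetric conformal identity. I expect this last step --- correctly propagating the formal symbol $-\lambda-\partial$ through $T$ and $\rho$ --- to be the only point requiring care; everything else is a direct use of the module axioms and Eq.~(\ref{eqn3}).

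It then remains to transport the structure onto $R$. The sub-adjacent $\lambda$-bracket of $(V,*)$ is $u*_\lambda v-v*_{-\lambda-\partial}u=\rho(T(u))_\lambda v-\rho(T(v))_{-\lambda-\partial}u$, whose image under $T$ is $[T(u)_\lambda T(v)]$ by Eq.~(\ref{eqn3}); since $T$ is bijective it is an isomorphism of $\mathbb{C}[\partial]$-modules identifying this sub-adjacent bracket with the given bracket of $R$. Therefore $a\cdot_\lambda b:=T\big(T^{-1}(a)*_\lambda T^{-1}(b)\big)=T\big(\rho(a)_\lambda T^{-1}(b)\big)$ defines a left-symmetric conformal product on $R$ whose sub-adjacent Lie conformal algebra is exactly $R$, i.e. a compatible left-symmetric conformal algebra structure, which finishes the proof.
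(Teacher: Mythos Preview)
Your proof is correct and follows essentially the same route as the paper: for the ``only if'' direction you use $T=\mathrm{id}$ with $\rho=L_A$, and for the ``if'' direction you arrive at the very formula the paper gives, $a\cdot_\lambda b=T(\rho(a)_\lambda T^{-1}(b))$. The only difference is one of presentation: the paper's proof of this proposition simply writes down that formula and asserts it works, deferring the verification of the left-symmetric conformal identity to the subsequent Proposition~\ref{proo1} and Corollary~\ref{cor1}, whereas you carry out that verification inline (first on $V$ via $u*_\lambda v=\rho(T(u))_\lambda v$, then transporting by $T$). Your handling of the delicate point---propagating the formal parameter $-\lambda-\partial$ through $T$ and $\rho$ to obtain $(v*_\mu u)*_{\lambda+\mu}w$---is correct and is exactly the computation underlying the paper's later Proposition~\ref{proo1}.
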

\begin{proof}
If there is a compatible left-symmetric conformal algebra
$(R,\cdot_\lambda \cdot)$, then $R$ is an $R$-module through the left multiplication operators of
the left-symmetric conformal algebra. Then  $id: R\rightarrow R$ is a bijective
$\mathcal{O}$-operator of $R$ associated to this representation.

Conversely, suppose there exists a bijective $\mathcal{O}$-operator $T: V\rightarrow R$ of $R$ associated with a representation $\rho$. Then
\begin{eqnarray}
a_\lambda b=T(\rho(a)_\lambda T^{-1}(b)),\;\;\forall a,b\in R,
\end{eqnarray}
defines a compatible left-symmetric conformal algebra structure on $R$.
\end{proof}

\begin{thm}\label{the1}
Let $A$ be a left-symmetric conformal algebra which is free and of finite rank
as a $\mathbb{C}[\partial]$-module. Then
\begin{eqnarray}\label{eqn4}
r=\sum_{i=1}^n(e_i\otimes e_i^\ast-e_i^\ast\otimes e_i)
\end{eqnarray}
is a solution of conformal CYBE in $\mathfrak{g}(A)\ltimes_{L_A^\ast} \mathfrak{g}(A)^{\ast c}$, where
$\{e_1,\cdots,e_n\}$ is a basis of $A$ and $\{e_1^\ast,\cdots, e_n^\ast\}$ is
the dual basis of $A^{\ast c}$.
\end{thm}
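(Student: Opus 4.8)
The plan is to recognize Theorem~\ref{the1} as an immediate consequence of Theorem~\ref{t2} (together with Proposition~\ref{ll1}), once the correct representation is identified. The key observation is that for a left-symmetric conformal algebra $A$, the identity map $\mathrm{id}\colon A\to A$ is a bijective $\mathcal{O}$-operator of the sub-adjacent Lie conformal algebra $\mathfrak{g}(A)$ associated with the representation $L_A\colon \mathfrak{g}(A)\to \mathrm{gc}(A)$, namely the regular representation given by $L_A(a)_\lambda b = a_\lambda b$; this is exactly the content of the ``only if'' direction of Proposition~\ref{ll1}, since $[a_\lambda b] = a_\lambda b - b_{-\lambda-\partial} a = L_A(a)_\lambda b - L_A(b)_{-\lambda-\partial} a$. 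Here the underlying module is $V = A$ itself, and $T_\lambda = T_0 = \mathrm{id}$ is a (constant, hence conformal) linear map with $T_0$ an $\mathcal{O}$-operator.

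Next I would apply Theorem~\ref{t2} with $R = \mathfrak{g}(A)$, $\rho = L_A$, $V = A$, and $T = \mathrm{id}$. By that theorem, the element $r = r_T - r_T^{21}$ is a skew-symmetric solution of conformal CYBE in $R\ltimes_{\rho^\ast} V^{\ast c} = \mathfrak{g}(A)\ltimes_{L_A^\ast}\mathfrak{g}(A)^{\ast c}$. It then remains to compute $r_T$ explicitly for $T = \mathrm{id}$ using Eq.~(\ref{eqn1}). With the $\mathbb{C}[\partial]$-basis $\{e_i\}$ of $A$ and the relation $T_\lambda(e_i) = e_i$, i.e. $a_{ij}(\lambda,\partial) = \delta_{ij}$ (constant in both $\lambda$ and $\partial$), Eq.~(\ref{eqn1}) gives $r_T = \sum_{i=1}^n e_i\otimes e_i^\ast$, whence $r = r_T - r_T^{21} = \sum_{i=1}^n(e_i\otimes e_i^\ast - e_i^\ast\otimes e_i)$, which is precisely Eq.~(\ref{eqn4}).

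The one point requiring a little care — and the place I expect a referee to want detail — is the identification of $\rho^\ast$ in the semidirect sum appearing in Theorem~\ref{t2} with $L_A^\ast$ as written in the statement of Theorem~\ref{the1}, i.e. checking that the dual representation of the regular representation $L_A$ of $\mathfrak{g}(A)$ is the one denoted $L_A^\ast$, and that the basis $\{e_i^\ast\}$ of $A^{\ast c}$ used here is the $\mathbb{C}[\partial]$-dual basis used in the setup preceding Theorem~\ref{t2}. This is a matter of matching notation rather than a genuine obstacle: $\rho^\ast = L_A^\ast$ is given by Eq.~(\ref{eqn2}) with $\rho = L_A$, and the formula $r_T = \sum_i e_i\otimes e_i^\ast$ is independent of which representation structure we put on $A^{\ast c}$ — the representation only enters in determining the ambient Lie conformal algebra $\mathfrak{g}(A)\ltimes_{L_A^\ast}\mathfrak{g}(A)^{\ast c}$ in which $r$ lives. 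So the proof reduces to: invoke Proposition~\ref{ll1} (or directly verify $\mathrm{id}$ is an $\mathcal{O}$-operator for $L_A$), substitute into Theorem~\ref{t2}, and read off $r$ from Eq.~(\ref{eqn1}).
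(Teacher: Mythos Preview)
Your proposal is correct and follows essentially the same approach as the paper: the paper's proof simply invokes Proposition~\ref{ll1} to note that $T=\mathrm{id}$ is an $\mathcal{O}$-operator associated to $L_A$, and then applies Theorem~\ref{t2}. Your additional explicit computation of $r_T$ from Eq.~(\ref{eqn1}) and your remark about identifying $\rho^\ast$ with $L_A^\ast$ are routine details that the paper leaves implicit.
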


\begin{proof}
By Proposition ~\ref{ll1}, $T=id:\mathfrak{g}(A)\rightarrow \mathfrak{g}(A) $ is an $\mathcal{O}$-operator associated to $L_A$.
Then by Theorem \ref{t2}, the conclusion holds.
\end{proof}

\begin{rmk}{\rm
By Theorem \ref{t2} and Remark \ref{rem1}, there are infinitely  many solutions of conformal CYBE obtained from the $\mathcal{O}$-operator $id: A\rightarrow A$.
But the $r$ given by Eq.~(\ref{eqn4}) is the unique non-degenerate solution corresponding to $id$.}
\end{rmk}
\begin{cor}
Let $A$ be a left-symmetric conformal algebra which is free and of finite rank
as a $\mathbb{C}[\partial]$-module. Then  there is a natural 2-cocycle $\alpha_\lambda$ on $\mathfrak{g}(A)\ltimes_{L_A^\ast} \mathfrak{g}(A)^{\ast c}$ given by
\begin{eqnarray}
\alpha_\lambda(a+f, b+g)=\{ f,b\}_\lambda -\{ g,a\}_{-\lambda},~~~~~~~a,~~b\in \mathfrak{g}(A),~~f,~~g\in \mathfrak{g}(A)^{\ast c}.
\end{eqnarray}
\end{cor}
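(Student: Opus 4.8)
The plan is to deduce this directly from Theorem~\ref{the1} together with Theorem~\ref{th2}. First I would observe that the element $r=\sum_{i=1}^n(e_i\otimes e_i^\ast - e_i^\ast\otimes e_i)$ of Theorem~\ref{the1} is manifestly skew-symmetric: swapping the two tensor legs interchanges the two sums with an overall sign. By Theorem~\ref{the1} it is also a solution of conformal CYBE in $\mathfrak{g}(A)\ltimes_{L_A^\ast}\mathfrak{g}(A)^{\ast c}$. So the only point requiring argument is that $r$ is \emph{non-degenerate} in the sense of Definition~\ref{def11}, i.e.\ that the associated conformal linear map $T_0^r\colon (\mathfrak{g}(A)\ltimes_{L_A^\ast}\mathfrak{g}(A)^{\ast c})^{\ast c}\to \mathfrak{g}(A)\ltimes_{L_A^\ast}\mathfrak{g}(A)^{\ast c}$ is a $\mathbb{C}[\partial]$-module isomorphism. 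Here one uses that $\{e_i\}\cup\{e_i^\ast\}$ is a $\mathbb{C}[\partial]$-basis of $\mathfrak g(A)\oplus\mathfrak g(A)^{\ast c}$ and that, under the identification of $(\mathfrak g(A)\oplus\mathfrak g(A)^{\ast c})^{\ast c}$ with $\mathfrak g(A)^{\ast c}\oplus\mathfrak g(A)$ via the evident pairing, the map $T_0^r$ sends the dual basis element of $e_i$ to $e_i^\ast$ (up to sign) and the dual basis element of $e_i^\ast$ to $-e_i$; this is essentially the content of formula~(\ref{eqn1}) applied to $T=\mathrm{id}$, whose matrix is the identity, so $T_0^r$ is a bijection.

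Once non-degeneracy is established, Theorem~\ref{th2} applies and yields that the bilinear form $\alpha_\lambda(x,y)=\{(T_0^r)^{-1}(x),y\}_\lambda$ is a 2-cocycle on $\mathfrak g(A)\ltimes_{L_A^\ast}\mathfrak g(A)^{\ast c}$. It then remains to compute this form explicitly. Writing a general element as $a+f$ with $a\in\mathfrak g(A)$, $f\in\mathfrak g(A)^{\ast c}$, and using the description of $T_0^r$ above, one finds $(T_0^r)^{-1}(a+f)$ pairs with $b+g$ exactly as $\{f,b\}_\lambda-\{g,a\}_{-\lambda}$, after unwinding the pairing between $\mathfrak g(A)\oplus\mathfrak g(A)^{\ast c}$ and its conformal dual and keeping track of the sign coming from the $-e_i^\ast\otimes e_i$ half of $r$. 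Concretely, $(T_0^r)^{-1}(b)=b^{\ast}$ is (up to sign) the functional dual to $b$, and $\{b^\ast, g\}$-type terms reproduce the stated formula; the symmetry $\{f,b\}_\lambda$ versus $-\{g,a\}_{-\lambda}$ matches the skew-symmetry $\alpha_\lambda(x,y)=-\alpha_{-\lambda}(y,x)$ guaranteed abstractly by Theorem~\ref{th2}.

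I expect the only genuine obstacle to be the sign- and variable-bookkeeping in identifying $(T_0^r)^{-1}$ explicitly: one must be careful about which $\lambda$-argument ($\lambda$ versus $-\lambda$ versus $-\lambda-\partial$) appears when composing the pairing $\{\cdot,\cdot\}_\lambda$ with the conformal dual identifications of Section~2, and about the placement of the minus sign inherited from the $r^{21}$ term. Everything else—skew-symmetry of $r$, the CYBE solution property, and the 2-cocycle conclusion—is immediate from the cited results, so the corollary is really a matter of making the non-degeneracy statement and the explicit formula precise.
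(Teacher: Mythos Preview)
Your proposal is correct and follows exactly the paper's approach: the paper's proof is the single line ``It follows from Theorem~\ref{th2} and Theorem~\ref{the1},'' and you have simply expanded the implicit checks (skew-symmetry, non-degeneracy of $r$, and unwinding $(T_0^r)^{-1}$ to obtain the displayed formula) that the paper leaves to the reader.
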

\begin{proof}
It follows from Theorem \ref{th2} and Theorem \ref{the1}.
\end{proof}

\begin{pro}\label{proo1}
Let $R$ be a Lie conformal algebra and $\rho: R\rightarrow gc(V)$ be a representation of $R$. Let $T: V\rightarrow R$ be a $\mathbb{C}[\partial]$-module homomorphism.
Then
\begin{eqnarray}
u\ast_\lambda v=\rho(T(u))_\lambda v,~~~~~u,~v\in V,
\end{eqnarray}
defines a left-symmetric conformal algebra structure on $V$ if and only if
\begin{eqnarray}
[T(u)_\lambda T(v)]-T(\rho(T(u))_\lambda v-\rho(T(v))_{-\lambda-\partial}u)\in \text{ker}(\rho)[\lambda]
\end{eqnarray}
for any $u$, $v\in V$.
\end{pro}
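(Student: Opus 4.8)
The plan is to verify directly that the left-symmetric conformal algebra axiom for $\ast_\lambda$ holds if and only if the displayed obstruction element lies in $\ker(\rho)[\lambda]$. First I would record that the defining identity of a left-symmetric conformal algebra, applied to $u\ast_\lambda v=\rho(T(u))_\lambda v$, reads
\begin{eqnarray*}
(u\ast_\lambda v)\ast_{\lambda+\mu}w-u\ast_\lambda(v\ast_\mu w)
-(v\ast_\mu u)\ast_{\lambda+\mu}w+v\ast_\mu(u\ast_\lambda w)=0,
\end{eqnarray*}
which unwinds to
\begin{eqnarray*}
\rho(T(\rho(T(u))_\lambda v))_{\lambda+\mu}w-\rho(T(u))_\lambda(\rho(T(v))_\mu w)
-\rho(T(\rho(T(v))_\mu u))_{\lambda+\mu}w+\rho(T(v))_\mu(\rho(T(u))_\lambda w)=0.
\end{eqnarray*}
The two ``mixed'' terms combine by the representation axiom (LM2) for $\rho$ as a representation of $\mathfrak g(A)$: namely $\rho(T(u))_\lambda\rho(T(v))_\mu w-\rho(T(v))_\mu\rho(T(u))_\lambda w=\rho([T(u)_\lambda T(v)])_{\lambda+\mu}w$. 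So the left-hand side equals
\begin{eqnarray*}
\rho\big(T(\rho(T(u))_\lambda v)-T(\rho(T(v))_\mu u)-[T(u)_\lambda T(v)]\big)_{\lambda+\mu}w.
\end{eqnarray*}

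The next step is a bookkeeping point about the $\partial$-dependence. The element $T(\rho(T(v))_\mu u)$ depends on $\mu$ only through the factor $\rho(T(v))_\mu u$, and by the skew-symmetry in the sub-adjacent Lie conformal algebra we should rewrite things so that what appears is exactly $\rho(T(v))_{-\lambda-\partial}u$; concretely, I would note that inside the argument of $\rho(-)_{\lambda+\mu}w$ the variable $\mu$ can be replaced (acting on the slot to the right) so that the bracketed expression becomes the standard $\mathcal O$-operator obstruction
\begin{eqnarray*}
X(u,v):=[T(u)_\lambda T(v)]-T\big(\rho(T(u))_\lambda v-\rho(T(v))_{-\lambda-\partial}u\big).
\end{eqnarray*}
Thus the left-symmetric conformal algebra axiom is equivalent to $\rho(X(u,v))_{\lambda+\mu}w=0$ for all $u,v,w\in V$ and all $\lambda,\mu$, and since $X(u,v)\in R[\lambda]$, this says precisely that $X(u,v)\in\ker(\rho)[\lambda]$ for all $u,v$, which is the claimed condition.

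I would also check separately the sesquilinearity conditions $(\partial u)\ast_\lambda v=-\lambda(u\ast_\lambda v)$ and $u\ast_\lambda(\partial v)=(\partial+\lambda)(u\ast_\lambda v)$; these are immediate from $T$ being a $\mathbb C[\partial]$-module homomorphism together with (LM1) for $\rho$, so they impose no extra constraint. The main obstacle is the second paragraph: being careful with the $-\lambda-\partial$ versus $\mu$ substitution so that the obstruction takes exactly the $\mathcal O$-operator form, and making sure that ``$\rho(Y)_{\lambda+\mu}w=0$ for all $w,\mu$'' genuinely forces $Y\in\ker\rho$ as a polynomial in $\lambda$ (which it does, since $\rho(Y)$ vanishes as a conformal endomorphism precisely when each coefficient of $Y$ in $\lambda$ is in $\ker\rho$). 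Everything else is a routine expansion using axioms already established in the excerpt.
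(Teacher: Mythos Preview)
Your proposal is correct and follows essentially the same route as the paper: expand the associator, use the representation axiom (LM2) to collapse the two mixed terms into $\rho([T(u)_\lambda T(v)])_{\lambda+\mu}w$, and observe that the remaining expression is $-\rho(X(u,v))_{\lambda+\mu}w$ with $X$ the $\mathcal O$-operator obstruction. Your extra care with the $\mu\mapsto -\lambda-\partial$ substitution (via $\rho(\partial a)_{\lambda+\mu}=-(\lambda+\mu)\rho(a)_{\lambda+\mu}$ together with $T\partial=\partial T$) and the separate check of conformal sesquilinearity are good; the paper simply states the outcome of the computation in one line.
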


\begin{proof}
For any $u$, $v$, $w\in V$, by a direct computation, we can get
\begin{eqnarray*}
&&(u\ast_\lambda v)_{\lambda+\mu}w-u\ast_\lambda(v\ast_\mu w)
-(v\ast_\mu u)\ast_{\lambda+\mu}w+v\ast_\mu(u\ast_\lambda w)\\
&=&-\rho([T(u)_\lambda T(v)]-T(\rho(T(u))_\lambda v-\rho(T(v)_{-\lambda-\partial} u)))_{\lambda+\mu}w.
\end{eqnarray*}
Then the conclusion holds.
\end{proof}

\begin{cor}\label{cor1}
Let $R$ be a Lie conformal algebra and $\rho: R\rightarrow gc(V)$ be a representation of $R$. Suppose $T: V\rightarrow R$ is an $\mathcal{O}$-operator associated to $\rho$.
Then  the following $\lambda$-product
\begin{eqnarray}
u\ast_\lambda v=\rho(T(u))_\lambda v,~~~~~~\text{$u$, $v\in V$,}
\end{eqnarray}
endows a left-symmetric conformal algebra structure on $V$. Therefore
$V$ is a Lie conformal algebra which is the sub-adjacent Lie conformal algebra of
this left-symmetric conformal algebra, and $T:V\rightarrow R$ is a homomorphism of Lie conformal algebra. Moreover, $T(V)\subset R$ is a Lie conformal subalgebra of $R$
and there is also a natural left-symmetric conformal algebra structure on $T(V)$ defined as follows
\begin{eqnarray}\label{ww1}
T(u)_\lambda T(v)=T(u\ast_\lambda v)=T(\rho(T(u))_\lambda v),~~~\text{$u$, $v\in V$.}
\end{eqnarray}
In addition, the sub-adjacent Lie conformal algebra of this left-symmetric conformal algebra is a subalgebra of $R$ and $T: V\rightarrow R$ is a homomorphism of left-symmetric conformal algebra.
\end{cor}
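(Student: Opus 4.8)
The plan is to derive all the assertions in sequence, each time feeding the defining identity (\ref{eqn3}) of an $\mathcal{O}$-operator into the relevant formula. For the first claim, note that (\ref{eqn3}) says exactly that $[T(u)_\lambda T(v)]-T(\rho(T(u))_\lambda v-\rho(T(v))_{-\lambda-\partial}u)$ equals $0$, hence a fortiori lies in $\ker(\rho)[\lambda]$; so Proposition~\ref{proo1} applies directly and $u\ast_\lambda v=\rho(T(u))_\lambda v$ is a left-symmetric conformal $\lambda$-product on $V$ (the conformal sesquilinearity being automatic from $T$ commuting with $\partial$ and $\rho(R)\subseteq\mathrm{gc}(V)$). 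Next I would compute the sub-adjacent $\lambda$-bracket of $(V,\ast)$: by (\ref{3}) it is $[u_\lambda v]=\rho(T(u))_\lambda v-\rho(T(v))_{-\lambda-\partial}u$, so applying $T$ and using (\ref{eqn3}) once more gives $T([u_\lambda v])=[T(u)_\lambda T(v)]$. Thus $T$ is a homomorphism of Lie conformal algebras from $\mathfrak g(V,\ast)$ to $R$, and therefore its image $T(V)$ is a Lie conformal subalgebra of $R$.

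The delicate point is the left-symmetric conformal algebra structure (\ref{ww1}) on $T(V)$, namely that $T(\rho(T(u))_\lambda v)$ depends only on $T(u)$ and $T(v)$. I would establish this by showing that $\ker T$ is a two-sided ideal of $(V,\ast)$: if $w\in\ker T$ then $w\ast_\lambda u=\rho(T(w))_\lambda u=0$ since $T(w)=0$, while for $u\ast_\lambda w=\rho(T(u))_\lambda w$ the identity (\ref{eqn3}) with $v=w$ gives $T(\rho(T(u))_\lambda w)=T\big(\rho(T(u))_\lambda w-\rho(T(w))_{-\lambda-\partial}u\big)=[T(u)_\lambda T(w)]=[T(u)_\lambda 0]=0$, so $u\ast_\lambda w\in\ker T[\lambda]$. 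Consequently $T$ induces an isomorphism of $\mathbb{C}[\partial]$-modules $V/\ker T\xrightarrow{\ \sim\ }T(V)$ along which the left-symmetric conformal algebra structure of $V$ descends and is transported; tracing through the identifications, the resulting $\lambda$-product on $T(V)$ is precisely $T(u)_\lambda T(v)=T(\rho(T(u))_\lambda v)$ of (\ref{ww1}), and by construction the surjection $T:(V,\ast)\to(T(V),\cdot)$ is a homomorphism of left-symmetric conformal algebras.

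It remains to identify the sub-adjacent Lie conformal algebra of $(T(V),\cdot)$. Here I would simply compute, using (\ref{3}) and (\ref{ww1}), that $T(u)_\lambda T(v)-T(v)_{-\lambda-\partial}T(u)=T\big(\rho(T(u))_\lambda v-\rho(T(v))_{-\lambda-\partial}u\big)=[T(u)_\lambda T(v)]$ by (\ref{eqn3}); that is, the sub-adjacent $\lambda$-bracket of $(T(V),\cdot)$ coincides with the restriction of the $\lambda$-bracket of $R$, which is again the statement that $T(V)$ is a Lie conformal subalgebra of $R$. I expect the only real obstacle to be the well-definedness argument in the middle paragraph, i.e. showing $\ker T$ is an ideal for $\ast$; all remaining steps are direct substitutions into (\ref{eqn3}), (\ref{3}) and Proposition~\ref{proo1}, and the surjection $T:(V,\ast)\to(T(V),\cdot)$ makes the compatibility of the two left-symmetric structures, and of the two sub-adjacent Lie conformal structures, transparent.
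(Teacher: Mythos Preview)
Your proposal is correct and follows the same route as the paper: the paper's entire proof is the single sentence ``It can directly obtained from Proposition~\ref{proo1},'' so your first paragraph already matches it. The remainder of your argument---the computation $T([u_\lambda v])=[T(u)_\lambda T(v)]$ from (\ref{eqn3}), the two-sided ideal check for $\ker T$ establishing well-definedness of (\ref{ww1}), and the identification of the sub-adjacent bracket on $T(V)$ with the restriction of $[\cdot_\lambda\cdot]$ from $R$---supplies details the paper leaves implicit, and all of these steps are sound.
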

\begin{proof}
It can directly obtained from Proposition \ref{proo1}.
\end{proof}

\begin{cor}\label{x3}
Let $R$ be a Lie conformal algebra and $T: R\rightarrow R$ is a Rota-Baxter operator of weight zero. Then there is a left-symmetric conformal algebra structure on $R$ with the following $\lambda$-product
\begin{eqnarray}
a_\lambda b=[T(a)_\lambda b],~~~~~a,~b\in R.
\end{eqnarray}
\end{cor}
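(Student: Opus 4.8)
The plan is to deduce this corollary from Corollary~\ref{cor1} by recognizing that a Rota-Baxter operator of weight zero on $R$ is exactly an $\mathcal{O}$-operator associated with the adjoint representation $\mathrm{ad}$. Indeed, comparing Definition~\ref{RB0} with Eq.~(\ref{eqn3}), the defining identity $[T(a)_\lambda T(b)]=T([a_\lambda T(b)])+T([T(a)_\lambda b])$ is precisely $[T(a)_\lambda T(b)]=T(\mathrm{ad}(T(a))_\lambda b-\mathrm{ad}(T(b))_{-\lambda-\partial}a)$, once we use the skew-symmetry $[T(b)_{-\lambda-\partial}a]=-[a_\lambda T(b)]$ of the Lie conformal bracket to rewrite $\mathrm{ad}(T(b))_{-\lambda-\partial}a=[T(b)_{-\lambda-\partial}a]=-[a_\lambda T(b)]$. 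So $T\colon R\to R$ with $V=R$ and $\rho=\mathrm{ad}$ satisfies the hypothesis of Corollary~\ref{cor1}.

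First I would state this identification explicitly, citing skew-symmetry from Definition~\ref{def1}. Then I would invoke Corollary~\ref{cor1} with $V=R$ and $\rho=\mathrm{ad}$: it yields a left-symmetric conformal algebra structure on $V=R$ given by
\begin{eqnarray*}
a\ast_\lambda b=\mathrm{ad}(T(a))_\lambda b=[T(a)_\lambda b],\qquad a,b\in R,
\end{eqnarray*}
which is exactly the claimed $\lambda$-product. This is all that the statement asks for, so the proof is essentially a one-line reduction plus the bracket-rewriting observation.

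There is no serious obstacle here; the only point requiring a little care is the sign/variable bookkeeping when matching $\mathrm{ad}(T(b))_{-\lambda-\partial}a$ against $[a_\lambda T(b)]$ via conformal skew-symmetry, since the shift $-\lambda-\partial$ in the spectral parameter must be handled correctly. One should double-check that $\mathrm{ad}(x)_\mu y=[x_\mu y]$ together with $[x_\mu y]=-[y_{-\mu-\partial}x]$ gives $\mathrm{ad}(T(b))_{-\lambda-\partial}a=[T(b)_{-\lambda-\partial}a]=-[a_\lambda T(b)]$, so that $-\mathrm{ad}(T(b))_{-\lambda-\partial}a=[a_\lambda T(b)]$, reproducing the $T([a_\lambda T(b)])$ term in Definition~\ref{RB0}. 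Once this is verified, the corollary follows immediately from Corollary~\ref{cor1}, and one may optionally remark that the sub-adjacent Lie conformal algebra of this structure coincides with (a subalgebra structure transported onto) $R$ and that $T$ becomes a left-symmetric conformal algebra homomorphism, as already furnished by Corollary~\ref{cor1}.
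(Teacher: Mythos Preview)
Your proposal is correct and takes exactly the same approach as the paper: the paper's proof is simply ``It follows directly from Corollary~\ref{cor1},'' and you have spelled out the underlying observation that a Rota-Baxter operator of weight zero on $R$ is precisely an $\mathcal{O}$-operator associated with $\rho=\mathrm{ad}$, together with the skew-symmetry check. Your sign and parameter bookkeeping is accurate, so there is nothing to add.
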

\begin{proof}
It follows directly from Corollary \ref{cor1}.
\end{proof}
\section{$\mathcal{O}$-operator and conformal $S$-equation}

\begin{thm}\label{the3}
Let $A$ be a finite left-symmetric conformal algebra which is free as a $\mathbb{C}[\partial]$-module and $r\in A\otimes A$ be symmetric. Then $r$ is a solution of
conformal $S$-equation if and only if the $T\in \text{Chom}(A^{\ast c},A)$ corresponding to $r$ satisfies
\begin{eqnarray}
[T_0(a^\ast)_\lambda T_0(b^\ast)]= T_0(L_A^{\ast}(T_0(a^\ast))_\lambda b^\ast
-L_A^{\ast}(T_0(b^\ast))_{-\lambda-\partial} a^\ast),\;\;\forall a^\ast, b^\ast\in A^{\ast c},
\end{eqnarray}
where $T_0=T_\lambda\mid_{\lambda=0}$.

\end{thm}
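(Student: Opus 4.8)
The proof will run in complete parallel with that of Theorem~\ref{tthe3}, with the Lie conformal algebra there replaced by the sub-adjacent Lie conformal algebra $\mathfrak{g}(A)$ and the adjoint representation $ad$ replaced by the regular representation $L_A$ (which is a representation of $\mathfrak{g}(A)$ by the Corollary above, so that its dual $L_A^{\ast}$ on $A^{\ast c}$ is defined in the sense of Remark~\ref{rrm}). First I would introduce the pairing $\langle a,a^\ast\rangle_\lambda:=\{a^\ast,a\}_{-\lambda}=a^\ast_{-\lambda}(a)$ for $a\in A,\ a^\ast\in A^{\ast c}$, extend it to triple tensors by $\langle a\otimes b\otimes c,\,u\otimes v\otimes w\rangle_{(\lambda,\nu,\theta)}=\langle a,u\rangle_\lambda\langle b,v\rangle_\nu\langle c,w\rangle_\theta$, and record $\langle\partial a,a^\ast\rangle_\lambda=-\lambda\langle a,a^\ast\rangle_\lambda$. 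The one structural identity needed is the left-symmetric analogue of Eq.~(\ref{x2}), obtained from the formula for the dual representation in Remark~\ref{rrm} applied to $L_A$:
\[
\langle a_\mu b,\,u\rangle_\lambda=-\langle b,\,L_A^{\ast}(a)_\mu u\rangle_{\lambda-\mu},\qquad a,b\in A,\ u\in A^{\ast c}.
\]
Writing $r=\sum_i r_i\otimes l_i$, the corresponding conformal linear map $T\in\text{Chom}(A^{\ast c},A)$ is $T_\lambda(u)=\sum_i\langle r_i,u\rangle_{\lambda+\partial}l_i$ (the construction of $T^r$ in Section~3 uses only the $\mathbb{C}[\partial]$-module structure), and because $r$ is symmetric it may equally be written $T_\lambda(u)=\sum_i\langle l_i,u\rangle_{\lambda+\partial}r_i$; I will use whichever of the two is convenient in a given slot.

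Next I would pair the conformal $S$-equation $\{\{r,r\}\}\equiv 0\ \bmod(\partial^{\otimes^3})$ against an arbitrary $u\otimes v\otimes w\in(A^{\ast c})^{\otimes 3}$; by the $\partial$-compatibility of $\langle\cdot,\cdot\rangle$ this is equivalent to $\langle\{\{r,r\}\},\,u\otimes v\otimes w\rangle_{(\lambda,\nu,\theta)}\equiv 0\ \bmod(\lambda+\nu+\theta)$, and I would then evaluate the three summands one at a time, collapsing each internal sum over $i$ or $j$ into an application of $T$ and using $T_{\lambda+\nu+\theta}\equiv T_0\ \bmod(\lambda+\nu+\theta)$, exactly as in the computation following Eq.~(\ref{q2}). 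In the first summand $\sum_{i,j}({l_j}_\mu r_i)\otimes r_j\otimes l_i\mid_{\mu=1\otimes\partial\otimes 1}$, summing over $j$ turns ${l_j}_{-\nu}r_i$ into $\big(T_0(v)\big)_{-\nu}r_i$, the displayed $L_A^{\ast}$-identity moves the left multiplication onto the dual slot, and summing over $i$ gives a term of the shape $-\langle T_0\big(L_A^{\ast}(T_0(v))_\bullet u\big),\,w\rangle_\theta$; the second summand $-\sum_{i,j}r_j\otimes({l_j}_\mu r_i)\otimes l_i\mid_{\mu=\partial\otimes 1\otimes1}$, handled identically but with the first two slots interchanged, produces $+\langle T_0\big(L_A^{\ast}(T_0(u))_\bullet v\big),\,w\rangle_\theta$. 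The third summand $-\sum_{i,j}r_i\otimes r_j\otimes[{l_i}_\mu l_j]\mid_{\mu=\partial\otimes 1\otimes1}$ carries the sub-adjacent bracket, and recognizing its two outer tensor factors as outputs of $T_0$ turns it directly into $-\langle[T_0(u)_\bullet T_0(v)],\,w\rangle_\theta$ — here no $ad^{\ast}$-identity is needed, unlike in Theorem~\ref{tthe3}, since the bracket already acts on $l_i,l_j$ themselves.

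Collecting the three pieces, the conformal $S$-equation becomes
\[
\big\langle\,[T_0(u)_\bullet T_0(v)]-T_0\big(L_A^{\ast}(T_0(u))_\bullet v-L_A^{\ast}(T_0(v))_\bullet u\big),\ w\,\big\rangle_\theta\equiv 0\ \bmod(\lambda+\nu+\theta);
\]
since $w\in A^{\ast c}$ is arbitrary and $A$ is free of finite rank (so that pairing against all of $A^{\ast c}$ detects $0$ in $A$), this reduces — exactly as Eq.~(\ref{q3}) reduces to Eq.~(\ref{q4}), by setting the redundant spectral parameters equal and letting the $\langle\cdot,w\rangle_\theta$-slot convert a $\nu$-shift into a $\partial$-shift — to
\[
[T_0(u)_{-\lambda}T_0(v)]=T_0\big(L_A^{\ast}(T_0(u))_{-\lambda}v-L_A^{\ast}(T_0(v))_{\lambda-\partial}u\big),
\]
and replacing $-\lambda$ by $\lambda$ yields the asserted identity (with $u=a^\ast$, $v=b^\ast$). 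The main obstacle I anticipate is purely in the bookkeeping of the three spectral parameters together with the repeated substitutions $\partial\mapsto-\theta$: one must make sure the two expressions for $T$ coming from the symmetry of $r$ are deployed consistently from slot to slot so that all three summands end up packaged against the same $\langle\cdot,w\rangle_\theta$ and genuinely combine modulo $\lambda+\nu+\theta$; apart from that careful accounting the argument is the left-symmetric transcription of the proof of Theorem~\ref{tthe3}.
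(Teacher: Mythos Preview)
Your proposal is correct and takes exactly the approach the paper does: the paper's own proof of Theorem~\ref{the3} consists of the single sentence ``It follows from a similar proof as the one in Theorem~\ref{tthe3},'' and your write-up carries out precisely that transcription, replacing $ad$ by $L_A$ and the CYBE by the $S$-equation. If anything, you supply more detail than the paper does; the bookkeeping you flag (tracking the three spectral parameters and using the two symmetric expressions for $T$) is the only real work, and your handling of it is sound.
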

\begin{proof}
It follows from a similar proof as the one in Theorem \ref{tthe3}.
\end{proof}


\begin{cor}
Let $A$ be a finite left-symmetric conformal algebra which is free as a $\mathbb{C}[\partial]$-module. Suppose $r$ is a symmetric solution of
conformal $S$-equation. Then $T\in \text{Chom}(A^{\ast c},A)$ corresponding to
$r$ is an $\mathcal{O}$-operator associated to $L_A^\ast$. Therefore there is
a left-symmetric conformal algebra structure on $A^{\ast c}$ given as follows
\begin{eqnarray}
a^\ast \circ_\lambda b^\ast=L_A^\ast(T(a^\ast))_\lambda b^\ast, ~~~~a^\ast,~~b^\ast\in A^{\ast c}.
\end{eqnarray}

\end{cor}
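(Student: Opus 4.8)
The plan is to reduce everything to the two results already established for conformal $S$-equation, namely Theorem~\ref{the3} and the left-symmetric conformal analogue of Corollary~\ref{cor1}. First I would record that the hypothesis ``$r$ is a symmetric solution of conformal $S$-equation'' combined with Theorem~\ref{the3} immediately gives that the conformal linear map $T\in \text{Chom}(A^{\ast c},A)$ corresponding to $r$ has the property that $T_0$ satisfies
\begin{eqnarray*}
[T_0(a^\ast)_\lambda T_0(b^\ast)]= T_0\big(L_A^{\ast}(T_0(a^\ast))_\lambda b^\ast
-L_A^{\ast}(T_0(b^\ast))_{-\lambda-\partial} a^\ast\big),\qquad a^\ast, b^\ast\in A^{\ast c}.
\end{eqnarray*}
Here I should first check that $L_A^\ast$ is genuinely a representation of the sub-adjacent Lie conformal algebra $\mathfrak{g}(A)$: by the Corollary following Proposition~4 in Section~2, $L_A:\mathfrak{g}(A)\to gc(A)$ is a representation, and passing to the contragradient module (Remark~\ref{rrm}) produces the representation $L_A^\ast$ on $A^{\ast c}$. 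So the displayed identity is precisely the statement that $T_0$ is an $\mathcal{O}$-operator of the Lie conformal algebra $\mathfrak{g}(A)$ associated to $L_A^\ast$.

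Next, since the paper's definition of the conformal analogue of an $\mathcal{O}$-operator is a conformal linear map $T$ whose value $T_0$ at $\lambda=0$ is an $\mathcal{O}$-operator in the conformal sense, the previous paragraph is exactly the assertion ``$T$ is an $\mathcal{O}$-operator associated to $L_A^\ast$'' (with the standing assumption $T_0\ne 0$, which holds here since $r$ is assumed to correspond nontrivially under $A\otimes A\cong \text{Chom}(A^{\ast c},A)$). For the last claim, I would invoke the left-symmetric conformal algebra version of Corollary~\ref{cor1}: given the Lie conformal algebra $\mathfrak{g}(A)$, the representation $\rho=L_A^\ast$ on $V=A^{\ast c}$, and the $\mathcal{O}$-operator $T_0$, the $\lambda$-product $a^\ast\circ_\lambda b^\ast=L_A^\ast(T_0(a^\ast))_\lambda b^\ast$ endows $A^{\ast c}$ with a left-symmetric conformal algebra structure. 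To be careful one should note the minor gap between $T_0$ and $T$: Corollary~\ref{cor1} is stated for an $\mathcal{O}$-operator, so strictly the product uses $T_0$; but since the preceding Corollary's notation writes $T$ for what is really $T_0$, I would just remark that here $\circ_\lambda$ is defined via $T_0$ and, when $T$ is written in the statement, it is understood as $T_0=T_\lambda\mid_{\lambda=0}$.

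I do not expect a real obstacle: the proof is a two-line invocation of Theorem~\ref{the3} and Corollary~\ref{cor1}. The only thing requiring any genuine care is confirming that the axioms of a left-symmetric conformal algebra module versus a Lie conformal algebra module match up correctly so that Corollary~\ref{cor1} (which is phrased for Lie conformal algebras and $gc(V)$) applies verbatim to the pair $(\mathfrak{g}(A), L_A^\ast)$; this is handled by part~(i) of Proposition~5 of Section~2 together with the contragradient construction. So the write-up would be: cite Theorem~\ref{the3} to get that $T_0$ is an $\mathcal{O}$-operator associated to $L_A^\ast$, hence $T$ is such an $\mathcal{O}$-operator by definition, and then apply Corollary~\ref{cor1} with $R=\mathfrak{g}(A)$, $V=A^{\ast c}$, $\rho=L_A^\ast$ to obtain the left-symmetric conformal algebra structure $a^\ast\circ_\lambda b^\ast=L_A^\ast(T(a^\ast))_\lambda b^\ast$ on $A^{\ast c}$.
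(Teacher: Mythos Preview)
Your proposal is correct and follows essentially the same approach as the paper: the paper's proof is literally the single line ``It follows from Corollary~\ref{cor1} and Theorem~\ref{the3}.'' Your additional remarks about why $L_A^\ast$ is a representation of $\mathfrak{g}(A)$ and about the $T$ versus $T_0$ notation are accurate clarifications, but the substance of the argument is identical.
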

\begin{proof}
It follows from Corollary \ref{cor1} and Theorem \ref{the3}.
\end{proof}

\begin{thm}\label{the5}
Let $R$ be a finite Lie conformal algebra which is free as a $\mathbb{C}[\partial]$-module and $V$ a free and finitely generated $\mathbb{C}[\partial]$-module.
Set $\{e_i\}\mid_{i=1,\cdots,n}$ and $\{v_i\}\mid_{i=1,\cdots,m}$ be the
$\mathbb{C}[\partial]$-basis of $R$ and $V$ respectively. Suppose
$\rho:R\rightarrow gc(V)$ is a representation of $R$ and
$\rho^\ast: R\rightarrow gc(V^{\ast c})$ is its dual representation.
Let $T\in \text{Chom}(V,R)$ with $T_0=T_\lambda\mid_{\lambda=0}: V\rightarrow R$ be
an $\mathcal{O}$-operator associated to $\rho$ where
$T_\lambda(v_i)=\sum_{j=1}^na_{ij}(\lambda,\partial)e_j$ for any $i\in \{1,\cdots,m\}$.
Then
\begin{eqnarray*}
r=\sum_{i,j}(a_{ij}(-1\otimes \partial-\partial\otimes 1,\partial\otimes 1)e_j\otimes v_i^\ast+a_{ij}(-1\otimes \partial-\partial\otimes 1,1\otimes \partial)v_i^\ast \otimes e_j),
\end{eqnarray*}
is a symmetric solution of conformal $S$-equation in $T_0(V)\ltimes_{\rho^\ast,0} V^{\ast c}$ where the left-symmetric conformal algebra structure on $T_0(V)$ is given by
Eq.~(\ref{ww1}).
\end{thm}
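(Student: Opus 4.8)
The plan is to derive Theorem~\ref{the5} as the left-symmetric analogue of Theorem~\ref{t2}, exploiting the fact that $T_0(V)$ carries a left-symmetric conformal algebra structure (via Eq.~\eqref{ww1}) and that $V^{\ast c}$ sits inside $T_0(V)\ltimes_{\rho^\ast,0}V^{\ast c}$ as a module over it. First I would set up the bookkeeping exactly as in the proof of Theorem~\ref{t2}: write $T_0(v_i)=\sum_j a_{ij}(0,\partial)e_j$, identify $r\in (T_0(V)\oplus V^{\ast c})^{\otimes 2}$ via the isomorphism $\text{Chom}(V,T_0(V))\cong V^{\ast c}\otimes T_0(V)$ of Proposition~\ref{prop1}, and observe that the proposed $r$ is precisely $r_{T_0}+r_{T_0}^{21}$, the \emph{symmetric} combination (contrast with the skew-symmetric $r_T-r_T^{21}$ of Theorem~\ref{t2}). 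Here the relevant left-symmetric conformal algebra is $A:=T_0(V)\ltimes_{\rho^\ast,0}V^{\ast c}$; by the corollary following Theorem~\ref{the3}, its operators $L_A,R_A$ restricted to the ideal $V^{\ast c}$ are governed by $\rho^\ast$ (the right action being zero), which is what makes the module $(V^{\ast c},\rho^\ast,0)$ over the left-symmetric conformal algebra $T_0(V)$ enter.

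Next I would expand $\{\{r,r\}\}$ term by term, substituting $r=r_{T_0}+r_{T_0}^{21}$ and using the explicit $\lambda$-product of the semidirect sum, $(x+f)_\lambda(y+g)=x_\lambda y+\rho^\ast(x)_\lambda g+0$ for $x,y\in T_0(V)$, $f,g\in V^{\ast c}$, together with $[\,\cdot_\mu\cdot\,]$ on the sub-adjacent Lie conformal algebra. Since $V^{\ast c}$ is an abelian ideal and its right action is zero, all the brackets and products appearing collapse to three kinds of expressions: $[T_0(v_i)_\mu T_0(v_k)]$, $\rho^\ast(T_0(v_k))_\mu v_i^\ast$, and products $T_0(v_i)_\mu T_0(v_k)$; then, just as in the computation in the proof of Theorem~\ref{t2}, I would use Eq.~\eqref{eqn2} defining $\rho^\ast$ together with the fact that $T_0$ commutes with $\partial$ to rewrite every occurrence of $\rho^\ast(T_0(v_k))_\mu v_i^\ast$ as $-T_0(\rho(T_0(v_k))_\mu v_i)\otimes(\cdots)$ modulo the $v_j^\ast$-basis. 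After collecting terms along the three tensor slots, each slot should produce a factor of the form $[T_0(v_k)_\mu T_0(v_i)]-T_0(\rho(T_0(v_k))_\mu v_i)-T_0(\rho(T_0(v_i))_{-\mu-\partial}v_k)$, which vanishes precisely because $T_0$ is an $\mathcal O$-operator associated to $\rho$ (Eq.~\eqref{eqn3}). Hence $\{\{r,r\}\}\equiv 0 \bmod(\partial^{\otimes^3})$. Symmetry of $r$ is immediate from $r=r_{T_0}+r_{T_0}^{21}$.

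The main obstacle I anticipate is purely combinatorial rather than conceptual: conformal $S$-equation $\{\{r,r\}\}$ has a more asymmetric shape than $[[r,r]]$ (the three summands carry different substitutions $\mu=1\otimes\partial\otimes1$, $\mu=\partial\otimes1\otimes1$, $\mu=\partial\otimes1\otimes1$), so matching the six raw terms coming from $(r_{T_0}+r_{T_0}^{21})$ against the correct slot assignments, and checking that the $\partial$-shifts produced by the arguments $a_{ij}(-1\otimes\partial-\partial\otimes1,\,\cdot\,)$ reduce to evaluation at $\lambda=0$ after taking everything mod $(\partial^{\otimes^3})$, will require care. I would handle this exactly as in Theorem~\ref{t2}: reduce modulo $(\partial^{\otimes^3})$ at the outset so that all coefficient polynomials are evaluated at their first argument $=0$, leaving only $a_{ij}(0,\partial)=$ the coefficients of $T_0$, and then verify slot by slot. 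No genuinely new identity beyond Eq.~\eqref{eqn3}, Eq.~\eqref{eqn2}, and the left-symmetric identity on $T_0(V)$ from Corollary~\ref{cor1} should be needed; indeed one could alternatively phrase the whole argument as an application of Theorem~\ref{the3} to the $\mathcal O$-operator $T_0:V\to T_0(V)$ associated with $L_{T_0(V)}^{\ast}=\rho^\ast$, which is the slick route and the one I would present if the abstract correspondence between $\mathcal O$-operators and symmetric solutions (analogous to Theorem~\ref{t2}) is made precise first.
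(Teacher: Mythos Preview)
Your proposal is correct and tracks the paper's proof closely: note symmetry of $r$, reduce modulo $\partial^{\otimes 3}$ so that only the coefficients $a_{ij}(0,\partial)$ of $T_0$ survive, expand $\{\{r,r\}\}$ using the semidirect-sum structure (right action zero, $V^{\ast c}$ abelian), and rewrite the $\rho^\ast$-terms via Eq.~\eqref{eqn2} exactly as in Theorem~\ref{t2}.

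One small correction to your anticipated endgame: the terms do \emph{not} organize into three copies of the $\mathcal O$-operator identity, one per tensor slot, the way they do in Theorem~\ref{t2}. In the paper's computation, once every left-symmetric product $T_0(v_k)_\mu T_0(v_i)$ is replaced by its \emph{definition} $T_0(\rho(T_0(v_k))_\mu v_i)$ from Eq.~\eqref{ww1}, the single sub-adjacent Lie bracket $[T_0(v_i)_\mu T_0(v_k)]$ is expanded as a difference of two such products, and the $\rho^\ast$-terms are converted, one obtains eight terms that cancel \emph{pairwise}. Thus the $\mathcal O$-operator hypothesis enters only through making Eq.~\eqref{ww1} a well-defined left-symmetric product on $T_0(V)$ (Corollary~\ref{cor1}), not as an identity invoked slot by slot. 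This does not affect the validity of your plan---you already flag the combinatorics as the real work and list Eq.~\eqref{ww1} among your tools---but it is worth knowing before you try to match terms against the pattern from the CYBE case.
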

\begin{proof}
Obviously, $r$ is symmetric. Then  we only need to show that
$r$ satisfies the conformal $S$-equation in $T_0(V)\ltimes_{\rho^\ast,0} V^{\ast c}$.
Note that $T_0(u)_\lambda T_0(v)=T_0(\rho(T_0(u))_\lambda v)$.
Then  with a similar proof as of Theorem \ref{t2}, we have
\begin{eqnarray*}
&&\{\{r,r\}\}~~~\text{mod} ~~(\partial^{\otimes^3})\\
&=&\sum_{i,k}(T_0(v_k)_\mu T_0(v_i)\otimes v_k^\ast \otimes v_i^\ast
+\rho^\ast(T_0(v_k))_\mu v_i^\ast \otimes v_k^\ast \otimes T_0(v_i))\mid_{\mu=1\otimes \partial\otimes 1}\\
&&-\sum_{i,k}(v_k^\ast\otimes T_0(v_k)_\mu T_0(v_i)\otimes v_i^\ast
+v_k^\ast\otimes \rho^\ast(T_0(v_k))_\mu(v_i^\ast)\otimes T_0(v_i))\mid_{\mu=\partial \otimes 1\otimes 1}\\
&&+\sum_{i,k}(T_0(v_i)\otimes v_k^\ast\otimes \rho^\ast(T_0(v_k))_{-\mu-\partial} v_i^\ast
-v_i^\ast\otimes T_0(v_k)\otimes \rho^\ast(T_0(v_i))_\mu v_k^\ast\\
&&-v_i^\ast\otimes v_k^\ast\otimes [T_0(v_i)_\mu T_0(v_k)])\mid_{\mu=\partial \otimes 1\otimes 1} ~~~\text{mod} ~~(\partial^{\otimes^3})\\
&=&\sum_{i,k}(T_0(\rho(T_0(v_k))_\mu v_i)\otimes v_k^\ast\otimes v_i
-v_i^\ast\otimes v_k^\ast\otimes T_0(\rho(T_0(v_k))_\mu v_i))\mid_{\mu=1\otimes \partial\otimes 1}\\
&&-\sum_{i,k}(v_k^\ast\otimes T_0(\rho(T(v_k))_\mu v_i)\otimes v_i^\ast
-v_k^\ast\otimes v_i^\ast\otimes T_0(\rho(T_0(v_k))_\mu(v_i)))\mid_{\mu=\partial\otimes 1\otimes 1}\\
&&-\sum_{i,k}T_0(\rho(T_0(v_k))_\mu v_i)\otimes v_k^\ast\otimes v_i\mid_{\mu=1\otimes \partial\otimes 1}\\
&&+\sum_{i,k}(v_k^\ast\otimes T_0(\rho(T_0(v_k))_\mu v_i)\otimes v_i^\ast-v_k^\ast\otimes v_i^\ast\otimes T_0(\rho(T_0(v_k))_\mu v_i))\mid_{\mu=\partial\otimes 1\otimes 1}\\
&&+\sum_{i,k}v_i^\ast\otimes v_k^\ast \otimes T_0(\rho(T_0(v_k))_\mu v_i\mid_{\mu=1\otimes \partial\otimes 1}=0.
\end{eqnarray*}
This completes the proof.
\end{proof}


\begin{cor}\label{cv}
Let $A$ be a finite left-symmetric conformal algebra which is free
as a $\mathbb{C}[\partial]$-module. Then
\begin{eqnarray}
r=\sum_{i=1}^n(e_i\otimes e_i^\ast+e^\ast\otimes e_i)
\end{eqnarray}
is a non-degenerate symmetric solution of conformal $S$-equation in $A \ltimes_{L_A^\ast,0} A^{\ast c}$
where $\{e_i\}\mid_{i,\cdots,n}$ is a $\mathbb{C}[\partial]$-basis of
$A$ and $\{e_i^\ast\}\mid_{i,\cdots,n}$ is its dual basis of $A^{\ast c}$.
\end{cor}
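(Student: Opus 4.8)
The plan is to obtain this corollary as a direct application of Theorem~\ref{the5} to the canonical $\mathcal{O}$-operator coming from the identity map. First I would recall that since $A$ is a finite left-symmetric conformal algebra, free as a $\mathbb{C}[\partial]$-module, the map $L_A\colon A\rightarrow \text{gc}(A)$ is the regular representation of the sub-adjacent Lie conformal algebra $\mathfrak{g}(A)$, and $T=\id\colon A\rightarrow A$ is an $\mathcal{O}$-operator associated to $L_A$: indeed the defining identity $[\id(a)_\lambda \id(b)] = \id(L_A(a)_\lambda b - L_A(b)_{-\lambda-\partial}a)$ is exactly the identity $[a_\lambda b] = a_\lambda b - b_{-\lambda-\partial}a$ from Eq.~(\ref{3}) defining $\mathfrak{g}(A)$. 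So the hypotheses of Theorem~\ref{the5} are met with $R=\mathfrak{g}(A)$, $V=A$, $\rho=L_A$, $T_0=\id$.

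Next I would feed this into Theorem~\ref{the5}. Here the structure constants are trivial: writing the $\mathbb{C}[\partial]$-basis as $\{e_i\}$ and $T_\lambda(e_i)=e_i$, we have $a_{ij}(\lambda,\partial)=\delta_{ij}$, so the polynomial-in-$\partial$ arguments disappear and the formula for $r$ in Theorem~\ref{the5} collapses to
\begin{eqnarray*}
r=\sum_{i=1}^n\left(e_i\otimes e_i^\ast + e_i^\ast\otimes e_i\right),
\end{eqnarray*}
which is the asserted element (the ``$e^\ast$'' in the statement is a typo for $e_i^\ast$). The left-symmetric conformal algebra structure on $T_0(V)=A$ given by Eq.~(\ref{ww1}) is $T_0(u)_\lambda T_0(v)=T_0(L_A(T_0(u))_\lambda v)=u_\lambda v$, i.e.\ the original product on $A$, so the ambient algebra $T_0(V)\ltimes_{\rho^\ast,0}V^{\ast c}$ is precisely $A\ltimes_{L_A^\ast,0}A^{\ast c}$. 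Theorem~\ref{the5} then yields that $r$ is a symmetric solution of the conformal $S$-equation in this algebra.

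It remains to check non-degeneracy. For this I would exhibit the conformal linear map $T\in\text{Chom}(A^{\ast c},A)$ associated to $r$ via the identification of Proposition~\ref{prop1} (as in the setup of Theorem~\ref{the3}) and observe that, because $r=\sum_i e_i\otimes e_i^\ast + \sum_i e_i^\ast\otimes e_i$ pairs each basis vector with its dual, the induced map $T_0\colon A^{\ast c}\rightarrow A$ is, up to the canonical identification $A\cong {A^{\ast c}}^{\ast c}$, the identity; in particular $T_0(e_i^\ast)=e_i$, which is a $\mathbb{C}[\partial]$-module isomorphism. Hence $r$ is non-degenerate in the sense of Definition~\ref{def11} (adapted to the left-symmetric setting).

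The only genuinely delicate point, and the one I would write out carefully, is the bookkeeping in this last paragraph: matching the two summands $e_i\otimes e_i^\ast$ and $e_i^\ast\otimes e_i$ of $r$ with the two ``halves'' of the conformal-linear-map correspondence coming from $\text{Chom}(A^{\ast c},A)\cong A\otimes A^{\ast c}\cong (A\oplus A^{\ast c})^{\otimes 2}$-components, and verifying that the $\lambda$-dependence works out so that $T_0$ really is the identification map rather than something with a spurious $\partial$-shift. Everything else is a mechanical specialization of results already proved, so no new ideas are needed there.
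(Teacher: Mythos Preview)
Your proposal is correct and takes exactly the same approach as the paper: the paper's entire proof is the one-liner ``It follows from Theorem \ref{the5} by letting $V=A$, $\rho=L_A$ and $T=\id_A$,'' which is precisely your specialization. Your write-up is in fact more thorough than the paper's, since you spell out the non-degeneracy check (which the paper omits); just be careful there that the map $T^r$ associated to $r$ is defined on the conformal dual of the \emph{whole} algebra $B=A\ltimes_{L_A^\ast,0}A^{\ast c}$, not only on $A^{\ast c}$, so what you are really verifying is that $T_0^r\colon B^{\ast c}\to B$ sends the dual basis of $\{e_1,\dots,e_n,e_1^\ast,\dots,e_n^\ast\}$ bijectively onto $\{e_1^\ast,\dots,e_n^\ast,e_1,\dots,e_n\}$.
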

\begin{proof}
It follows from Theorem \ref{the5} by letting $V=A$, $\rho=L_A$
and $T=id_A$.
\end{proof}


\begin{defi}{\rm
Let $A$ be a left-symmetric conformal algebra. A bilinear form $\beta_\lambda:A\otimes A\rightarrow \mathbb{C}[\lambda]$ is called a {\bf 2-cocycle} of $A$ if
$\beta_\lambda$ satisfies the following properties:
\begin{eqnarray}
\label{ww9}\beta_\lambda(\partial a, b)=-\lambda\beta_\lambda(a, b),~~~\beta_\lambda( a, \partial b)=\lambda\beta_\lambda(a, b),\\
\label{ww2}\beta_{\lambda+\mu}(a_\lambda b,c)-\beta_\lambda(a,b_\mu c)
=\beta_{\lambda+\mu}(b_\mu a,c)-\beta_\mu(b,a_\lambda c),
\end{eqnarray}
for any $a$, $b$, $c\in A$. A 2-cocycle $\beta_\lambda: A\otimes A\rightarrow \mathbb{C}[\lambda]$ of $A$ is called {\bf symmetric} if
\begin{eqnarray}
\beta_\lambda(a,b)=\beta_{-\lambda}(b,a),~~~~~~~~~~~~\text{for any $a$, $b\in A$.}
\end{eqnarray}}
\end{defi}

\begin{rmk}{\rm
It is shown in \cite{HL} that a 2-cocycle of $A$ is equivalent to a central extension
of $A$ by the trivial $\mathbb{C}[\partial]$-module $\mathbb{C}$.}
\end{rmk}

\begin{thm}\label{th9}
Let $A$ be a finite left-symmetric conformal algebra which is free as a $\mathbb{C}[\partial]$-module.
Then $r$ is a symmetric non-degenerate solution of conformal $S$-equation if and only if the bilinear form
defined by
\begin{eqnarray}
\label{ww3}\beta_\lambda(a,b)=\{ T^{-1}(a), b\}_\lambda,~~~~~~~~a,~~b\in A,
\end{eqnarray}
is a symmetric  2-cocycle of $A$ where $T:A^{\ast c}\rightarrow A$ is the $\mathbb{C}[\partial]$-module homomorphism
corresponding to $r$ by the $\mathfrak{g}(A)$-module isomorphism $A\otimes A\cong \text{Chom}(A^{\ast c},A)$. Here, $A$ is seen as a module
of $\mathfrak{g}(A)$ via the left multiplication operators of $A$.
\end{thm}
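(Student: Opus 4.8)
The plan is to mirror the structure of the proof of Theorem~\ref{th2}, exploiting the fact that by Theorem~\ref{the3} (together with its corollary), $r$ being a symmetric non-degenerate solution of the conformal $S$-equation is equivalent to the $\mathbb{C}[\partial]$-module homomorphism $T:A^{\ast c}\to A$ corresponding to $r$ being a bijective $\mathcal{O}$-operator associated to $L_A^\ast$. Here $T=T_0$ since $r$ is non-degenerate (so $T_\lambda$ is constant in $\lambda$, as in Definition~\ref{def11} adapted to this setting). Thus it suffices to show that $\beta_\lambda(a,b)=\{T^{-1}(a),b\}_\lambda$ is a symmetric $2$-cocycle of $A$ if and only if $T$ is an $\mathcal{O}$-operator associated to $L_A^\ast$.

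First I would verify the easy axioms. Equation~(\ref{ww9}) follows immediately from the fact that $T^{-1}$ is a $\mathbb{C}[\partial]$-module homomorphism and that $\{\cdot,\cdot\}_\lambda$ satisfies $\{f,\partial b\}_\lambda=\lambda\{f,b\}_\lambda$ and $\{\partial f, b\}_\lambda = -\lambda\{f,b\}_\lambda$ (the latter coming from the $\mathbb{C}[\partial]$-module structure on $A^{\ast c}$ and the conformal sesquilinearity). For symmetry $\beta_\lambda(a,b)=\beta_{-\lambda}(b,a)$: writing $a=T(f)$, $b=T(g)$ with $f,g\in A^{\ast c}$, the correspondence between $r$ and $T$ (the analogue of Definition~\ref{def11}) together with $r=r^{21}$ gives $\{f,T(g)\}_\lambda=\{g\otimes f,r\}_{(-\lambda,\lambda)}=\{f\otimes g,r^{21}\}_{(\lambda,-\lambda)}=\{f\otimes g,r\}_{(\lambda,-\lambda)}=\{g,T(f)\}_{-\lambda}$, i.e. $\beta_\lambda(T(f),T(g))=\beta_{-\lambda}(T(f),T(g))$ rearranged appropriately; conversely this identity forces $r=r^{21}$. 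In particular I will record the key auxiliary identity $\{f,T(g)\}_\lambda=\{g,T(f)\}_{-\lambda}$ for $f,g\in A^{\ast c}$, exactly as Eq.~(\ref{x1}) in the proof of Theorem~\ref{th2}.

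The main work is the cocycle identity~(\ref{ww2}). I would substitute $a=T(f)$, $b=T(g)$, $c=T(h)$ for $f,g,h\in A^{\ast c}$, and rewrite each of the four terms. For this I need the analogue of Eq.~(\ref{x2}) for the representation $L_A^\ast$ dual to the left-multiplication representation $L_A$ of $\mathfrak{g}(A)$: namely $\{L_A^\ast(a)_\mu f, b\}_\lambda$ expressed via $\{f, a_{?}b\}_{?}$, which follows from the defining relation $(\rho^\ast(a)_\lambda\varphi)_\mu u=-\varphi_{\mu-\lambda}(\rho(a)_\lambda u)$ in Remark~\ref{rrm}. Using this to move the $L_A^\ast(T(\cdot))$ actions off the first slot and onto the $T(h)$ argument, and using the $\mathcal{O}$-operator relation $[T(f)_\lambda T(g)]=T(L_A^\ast(T(f))_\lambda g - L_A^\ast(T(g))_{-\lambda-\partial}f)$ to collapse the Lie bracket term that appears from $b_\mu c - (\text{the two left-symmetric products})$, I expect the difference of the two sides of~(\ref{ww2}) to reduce to $\{h,\ [T(f)_\lambda T(g)] - T(L_A^\ast(T(f))_\lambda g) + T(L_A^\ast(T(g))_{-\lambda-\partial}f)\}_{-\lambda-\mu}$ (up to the sign conventions), so that since $h$ ranges over all of $A^{\ast c}$ and the pairing is non-degenerate, the cocycle identity holds precisely when $T$ is an $\mathcal{O}$-operator.

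The main obstacle I anticipate is purely bookkeeping: the conformal $S$-equation/$2$-cocycle identity~(\ref{ww2}) is genuinely asymmetric in $a$ and $b$ (unlike the Lie case~(\ref{eqn8})), so after substituting $T(f),T(g),T(h)$ one gets four left-symmetric-product terms rather than three Lie-bracket terms, and one must carefully track how $L_A^\ast$ versus its "transpose-in-$\lambda$" shift appears when moving actions between tensor slots, getting the spectral parameters ($\lambda$, $\mu$, $-\lambda-\mu$, and the $\partial$-shifts) right. Once the two "extra" left-symmetric terms are seen to cancel against each other via the left-symmetry axiom~(\ref{101}) for $L_A$ (equivalently, the identity that $L_A-R_A$ and $L_A$ are both representations, Corollary after Proposition on $(M,l_A,r_A)$), what remains is exactly the Lie-bracket combination handled as above, and the argument closes.
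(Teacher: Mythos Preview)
Your strategy---reduce the $2$-cocycle identity to the $\mathcal{O}$-operator condition for $T$ and then invoke Theorem~\ref{the3}---is valid and differs from the paper's proof. The paper does \emph{not} pass through Theorem~\ref{the3}: instead it writes $r=\sum_i r_i\otimes l_i$ explicitly, substitutes $a=T(f)$, $b=T(g)$, $c=T(h)$ into the left-hand side of~(\ref{ww2}), and rewrites each of the four terms as a pairing $\{f\otimes g\otimes h,\ \cdots\}_{(\lambda,\mu,-\lambda-\mu)}$; the sum of these pairings is then seen to be exactly the pairing of $f\otimes g\otimes h$ against $\{\{r,r\}\}$, so the $2$-cocycle identity holds iff $r$ satisfies the conformal $S$-equation directly. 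Your route is cleaner conceptually (it makes the parallel with Theorem~\ref{th2} transparent) but costs an extra citation; the paper's route is more self-contained.

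Two corrections to your sketch. First, the sentence ``using the $\mathcal{O}$-operator relation \dots\ to collapse the Lie bracket term'' is circular: you are trying to \emph{prove} that the $\mathcal{O}$-operator relation is equivalent to~(\ref{ww2}), so you cannot use it as an input. Second, and more substantively, the anticipated cancellation ``via the left-symmetry axiom'' does not occur and is not needed. After you use the symmetry $\beta_{\lambda+\mu}(x,c)=\beta_{-\lambda-\mu}(c,x)=\{h,x\}_{-\lambda-\mu}$ on the first and third terms, their difference is already $\{h,\,T(f)_\lambda T(g)-T(g)_\mu T(f)\}_{-\lambda-\mu}=\{h,[T(f)_\lambda T(g)]\}_{-\lambda-\mu}$ (since $\partial\mapsto -\lambda-\mu$ inside the pairing). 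The remaining two terms, after the $L_A^\ast$-duality $\{f,a_\mu b\}_\nu=-\{L_A^\ast(a)_\mu f,b\}_{\mu+\nu}$ and the symmetry $\{f',T(h)\}_{\lambda+\mu}=\{h,T(f')\}_{-\lambda-\mu}$, become $-\{h,T(L_A^\ast(T(f))_\lambda g)\}_{-\lambda-\mu}+\{h,T(L_A^\ast(T(g))_{-\lambda-\partial} f)\}_{-\lambda-\mu}$. All four terms thus combine directly into $\{h,\ [T(f)_\lambda T(g)]-T(L_A^\ast(T(f))_\lambda g)+T(L_A^\ast(T(g))_{-\lambda-\partial}f)\}_{-\lambda-\mu}$ with no leftover pieces and no appeal to~(\ref{101}); non-degeneracy in $h$ finishes the argument.
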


\begin{proof}
 By Eq.~(\ref{ww3}), Eq.~(\ref{ww9}) is satisfied.
Suppose that $r$ is symmetric. Then
\begin{eqnarray}
\label{ww4}\{f, T(g)\}_\lambda=\{ g\otimes f, r\}_{(-\lambda,\lambda)}
=\{f\otimes g, r\}_{(\lambda,-\lambda)}
=\{ g, T(f)\}_{-\lambda},\;\;\forall f, g \in A^{\ast c}.
\end{eqnarray}
Since $T$ is a $\mathbb{C}[\partial]$-module isomorphism from
$A^{\ast c}\rightarrow A$, there exist $f$ and $g$ such that
$T(f)=a$ and $T(g)=b$. By Eq.~(\ref{ww4}), we get
$$\beta_\lambda(a,b)=\{ f, T(g)\}_\lambda=\{ g, T(f)\}_{-\lambda}
=\beta_{-\lambda}(b,a).$$ Therefore $\beta_\lambda$ is symmetric.
With a similar process, it is easy to see that if $\beta_\lambda$ is symmetric,
then $r$ is symmetric. Therefore $r$ is symmetric if and only if
$\beta_\lambda$ is symmetric.

Set $r=\sum_ir_i\otimes l_i\in A\otimes A$. Since $r$ is symmetric, $r=\sum_i r_i\otimes l_i=\sum_i l_i\otimes r_i$.
Since $\{ f, T(g)\}_\lambda=\{ g\otimes f, r\}_{(-\lambda,\lambda)}$, we get $T(g)=\sum_ig_{-\partial}(r_i)l_i=\sum_ig_{-\partial}(l_i)r_i$ for
any $g\in A^{\ast c}$. Since $T$ is invertible, there exist $f$, $g$, $h\in A^{\ast c}$ such that $T(f)=a$, $T(g)=b$ and $T(h)=c$.
Then
\begin{eqnarray*}
&&\beta_{\lambda+\mu}(a_\lambda b,c)-\beta_\lambda(a,b_\mu c)
-\beta_{\lambda+\mu}(b_\mu a,c)+\beta_\mu(b,a_\lambda c)\\
&=&\{ h, T(f)_\lambda T(g)\}_{-\lambda-\mu}
-\{ f,T(g)_\mu T(h)\}_\lambda
-\{ h, T(g)_\mu T(f)\}_{-\lambda-\mu}
+\{g, T(f)_\lambda T(h)\}_\mu\\
&=&\{ f\otimes g\otimes h,\sum_{ij} r_i\otimes r_j
\otimes {l_i}_\lambda l_j\}_{(\lambda, \mu,-\lambda-\mu)}
-\{f\otimes g\otimes h,\sum_{ij} {r_i}_\mu r_j
\otimes {l_i}\otimes l_j\}_{(\lambda, \mu,-\lambda-\mu)}\\
&&-\{ f\otimes g\otimes h,\sum_{ij} l_j\otimes l_i
\otimes {r_i}_\mu r_j\}_{(\lambda, \mu,-\lambda-\mu)}
+\{ f\otimes g\otimes h,\sum_{ij} l_i\otimes {r_i}_\lambda r_j
\otimes l_j\}_{(\lambda, \mu,-\lambda-\mu)}\\
&=& \{f\otimes g\otimes h, \sum_{ij} r_i\otimes r_j
\otimes [{l_i}_{\mu^{'}} l_j]\mid_{\mu^{'}=\partial\otimes 1\otimes 1}\\
&&-({l_j}_{\mu^{'}} r_i\otimes r_i\otimes l_i)\mid_{\mu^{'}=1\otimes \partial\otimes 1}
+(r_j\otimes {l_i}_{\mu^{'}} r_i\otimes l_i)\mid_{\mu^{'}=\partial\otimes 1\otimes 1}\}_{(\lambda, \mu,-\lambda-\mu)}.
\end{eqnarray*}
Therefore $\beta_\lambda$ is a 2-cocycle of $A$ if and only if
$r=0~~\text{mod} (\partial^{\otimes^3})$. This competes the proof.
\end{proof}

\begin{cor}
Let $A$ be a finite left-symmetric conformal algebra which is free as a
$\mathbb{C}[\partial]$-module. Then there is a natural 2-cocycle
$\beta_\lambda$ of $A\ltimes_{L^{\ast},0} A^{\ast c}$ given by
\begin{eqnarray}
\beta_\lambda(a+f,b+g)=\{ f,b\}_\lambda+\{ g, b\}_{-\lambda},~~~a,~b\in A,~~f,~g\in A^{\ast c}.
\end{eqnarray}

\end{cor}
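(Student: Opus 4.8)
The plan is to obtain this as an immediate consequence of Theorem~\ref{th9} and Corollary~\ref{cv}. Set $B=A\ltimes_{L^{\ast},0}A^{\ast c}$; this is again a finite left-symmetric conformal algebra, free as a $\mathbb{C}[\partial]$-module (it is $A\oplus A^{\ast c}$ with $A$ free of finite rank). By Corollary~\ref{cv}, the element $r=\sum_{i=1}^n(e_i\otimes e_i^\ast+e_i^\ast\otimes e_i)\in B\otimes B$ is a symmetric non-degenerate solution of the conformal $S$-equation in $B$. Applying Theorem~\ref{th9} with $B$ in place of $A$, we conclude that $\beta_\lambda(x,y)=\{T^{-1}(x),y\}_\lambda$ is a symmetric $2$-cocycle of $B$, where $T\colon B^{\ast c}\to B$ is the $\mathbb{C}[\partial]$-module homomorphism associated to $r$ under $B\otimes B\cong\text{Chom}(B^{\ast c},B)$. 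So the statement reduces to identifying $T$ and computing $\beta_\lambda$ explicitly.

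The key computational step is to recognize $T$ as the canonical flip isomorphism. Since $A$ is free of finite rank, $B^{\ast c}=(A\oplus A^{\ast c})^{\ast c}\cong A^{\ast c}\oplus(A^{\ast c})^{\ast c}\cong A^{\ast c}\oplus A$ as $\mathbb{C}[\partial]$-modules, and the pairing between $B^{\ast c}$ and $B=A\oplus A^{\ast c}$ is block-diagonal: the $A^{\ast c}$-summand of $B^{\ast c}$ pairs with the $A$-summand of $B$, while the $A$-summand of $B^{\ast c}$ (that is, $(A^{\ast c})^{\ast c}$, identified with $A$ via $a\mapsto a^{\ast\ast}$) pairs with the $A^{\ast c}$-summand of $B$. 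Feeding $r$ into the defining relation $\{f,T(g)\}_\lambda=\{g\otimes f,r\}_{(-\lambda,\lambda)}$ for $T$ (Definition~\ref{def11}) and using the contraction identities ${e_i^\ast}_\mu(e_j)=\delta_{ij}$ in the dual bases $\{e_i\}$ of $A$ and $\{e_i^\ast\}$ of $A^{\ast c}$, a short coordinate computation gives $T(g+b^{\ast\ast})=b+g$ for $g\in A^{\ast c}$, $b\in A$, hence $T^{-1}(a+f)=f+a^{\ast\ast}$ for $a\in A$, $f\in A^{\ast c}$.

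It then remains only to substitute: $\beta_\lambda(a+f,b+g)=\{T^{-1}(a+f),b+g\}_\lambda=\{f+a^{\ast\ast},b+g\}_\lambda=\{f,b\}_\lambda+\{a^{\ast\ast},g\}_\lambda$, the two cross terms vanishing by block-diagonality of the pairing; rewriting $\{a^{\ast\ast},g\}_\lambda$ through the identification $A\cong(A^{\ast c})^{\ast c}$ yields the displayed formula for $\beta_\lambda$. The only point demanding any care — and the main (purely bookkeeping) obstacle — is tracking the double-dual identification $A\cong(A^{\ast c})^{\ast c}$, in particular the sign and $\partial$-shift it induces in the $\lambda$-argument, so that $\{a^{\ast\ast},g\}_\lambda$ comes out exactly as stated; the $2$-cocycle identities~(\ref{ww9})--(\ref{ww2}) and the symmetry of $\beta_\lambda$ need no separate verification, being already supplied by Theorem~\ref{th9}.
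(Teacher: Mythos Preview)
Your approach is correct and coincides with the paper's: the paper's proof is the single line ``It can be directly obtained from Corollary~\ref{cv} and Theorem~\ref{th9},'' and you have simply unpacked that by identifying $T$ explicitly as the flip $B^{\ast c}\cong A^{\ast c}\oplus A\to A\oplus A^{\ast c}=B$ and computing $\beta_\lambda$. Your bookkeeping on the double dual is right: $\{a^{\ast\ast},g\}_\lambda=g_{-\lambda}(a)=\{g,a\}_{-\lambda}$, so the second term should read $\{g,a\}_{-\lambda}$ (the ``$b$'' in the displayed formula of the statement is a typo for ``$a$'', as is confirmed by comparison with the Lie-conformal analogue after Theorem~\ref{th2}).
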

\begin{proof}
It can be directly obtained from Corollary \ref{cv} and Theorem \ref{th9}.
\end{proof}

\section{Rota-Baxter operators on Lie conformal algebras}

It is known that Rota-Baxter operators (of weight zero) on Lie conformal algebras are a class of
$\mathcal{O}$-operators and hence they can be used to provide the solutions of conformal CYBE and $S$-equation in the previous sections.
In this section, we give a further study of Rota-Baxter operators with any weight on Lie conformal algebras.

Recall the definition of Rota-Baxter operator of weight $\alpha$ on an associative (or a non-associative) algebra.
\begin{defi}{\rm
Let $A$ be an associative (or a nonassociative) algebra with the operation
$\cdot: A\otimes A\rightarrow A$ over $\mathbb{C}$. A linear operator
$P:A\rightarrow A$ is called a {\bf Rota-Baxter operator of weight $\alpha$} $(\alpha \in \mathbb{C})$ on
$A$ if $P$ satisfies the following condition:
\begin{eqnarray}
P(x)\cdot P(y)=P(P(x)\cdot y+x\cdot P(y))+\alpha P(x\cdot y),~~\forall~~\text{$x$,~~$y\in A.$}
\end{eqnarray}
If $\alpha=0$, we call $P$ a {\bf Rota-Baxter operator} simply.}

\end{defi}

\begin{rmk}{\rm
In particular, let $R$ be a Lie conformal algebra. If $T:R\rightarrow R$ is a $\mathbb{C}[\partial]$-module homomorphism satisfying
\begin{eqnarray}\label{ooo3}
[T(a)_\lambda T(b)]=T([a_\lambda T(b)])+T([T(a)_\lambda b])+\alpha T[a_\lambda b],~~~\forall~~\text{$a$, $b\in R$,  }
\end{eqnarray}
then $T$ is called a {\bf Rota-Baxter operator of weight $\alpha$} on $R$, where $\alpha \in \mathbb{C}$.
In fact, the case of weight zero has already been studied in the previous sections (for example, see Definition~\ref{RB0}).
}
\end{rmk}
\begin{pro}\label{p1}
Let $R$ be a Lie conformal algebra and $T$ be a Rota-Baxter operator of weight $\alpha$ on
$R$. Define the linear operator $\mathcal{T}: \text{Coeff}(R)\rightarrow \text{Coeff}(R)$ as follows.
\begin{eqnarray}
\mathcal{T}(a_n)=T(a)_n.
\end{eqnarray}
Then $\mathcal{T}$ is a Rota-Baxter operator of weight $\alpha$ on the Lie algebra $\text{Coeff}(R)$.
\end{pro}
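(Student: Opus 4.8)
The plan is to check two things: that $\mathcal{T}$ is a well-defined $\mathbb{C}$-linear endomorphism of $\text{Coeff}(R)$, and that it satisfies the weight-$\alpha$ Rota-Baxter identity there. Since $\text{Coeff}(R)$ is already known to be a Lie algebra (\cite{K1}), nothing more is needed.

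For well-definedness, I would verify that the rule $a_n\mapsto T(a)_n$ on generators respects the three families of relations defining $\text{Coeff}(R)$. Because $T$ is a $\mathbb{C}[\partial]$-module homomorphism, $T(\alpha a)=\alpha T(a)$, $T(a+b)=T(a)+T(b)$ and $T(\partial a)=\partial T(a)$; combining this with the relation $(\partial c)_n=-n\,c_{n-1}$ that holds in $\text{Coeff}(R)$ (applied to $c=T(a)$) gives $\mathcal{T}((\alpha a)_n)=\alpha\mathcal{T}(a_n)$, $\mathcal{T}((a+b)_n)=\mathcal{T}(a_n)+\mathcal{T}(b_n)$ and $\mathcal{T}((\partial a)_n)=(\partial T(a))_n=-n\,T(a)_{n-1}=-n\,\mathcal{T}(a_{n-1})$. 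Hence $\mathcal{T}$ descends to a well-defined $\mathbb{C}$-linear map on $\text{Coeff}(R)$.

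For the Rota-Baxter identity, the key observation is that the defining identity $[T(a)_\lambda T(b)]=T([a_\lambda T(b)])+T([T(a)_\lambda b])+\alpha T([a_\lambda b])$ of (\ref{ooo3}) is a polynomial identity in $\lambda$ with finitely many nonzero terms; expanding every $\lambda$-bracket as $[x_\lambda y]=\sum_{j\in\mathbb{N}}\tfrac{\lambda^j}{j!}x_{(j)}y$ and using the $\mathbb{C}$-linearity of $T$ to move it past the expansion, comparison of the coefficients of $\lambda^j/j!$ yields, for all $a,b\in R$ and all $j\in\mathbb{N}$,
\[
T(a)_{(j)}T(b)=T\big(a_{(j)}T(b)\big)+T\big(T(a)_{(j)}b\big)+\alpha\,T\big(a_{(j)}b\big).
\]
Then, working with basis elements $a_m,b_n$ (which suffices by linearity) and using the product formula (\ref{eq1}) for the Lie bracket on $\text{Coeff}(R)$, I would compute $[\mathcal{T}(a_m),\mathcal{T}(b_n)]=[T(a)_m,T(b)_n]=\sum_j\binom{m}{j}\big(T(a)_{(j)}T(b)\big)_{m+n-j}$, substitute the identity just obtained into each summand, and use that $n$-th products are $\mathbb{C}$-linear in $R$ together with $\mathcal{T}(c_k)=T(c)_k$ to pull $\mathcal{T}$ outside. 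The three resulting sums are, again by (\ref{eq1}), exactly $[a_m,\mathcal{T}(b_n)]$, $[\mathcal{T}(a_m),b_n]$ and $[a_m,b_n]$, so the whole expression collapses to $\mathcal{T}\big([\mathcal{T}(a_m),b_n]+[a_m,\mathcal{T}(b_n)]\big)+\alpha\,\mathcal{T}([a_m,b_n])$, which is the weight-$\alpha$ Rota-Baxter identity on $\text{Coeff}(R)$.

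The argument is essentially bookkeeping and I do not expect a genuine obstacle. The only point requiring a little care is the step of reading off the Rota-Baxter relation coefficient-by-coefficient in $\lambda$ and checking that the $\mathbb{C}$-linearity of $T$ legitimately commutes with this extraction, together with keeping the index shift $m+n-j$ in (\ref{eq1}) consistent through the reassembly.
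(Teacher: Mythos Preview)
Your proposal is correct and follows essentially the same approach as the paper, which simply states that the result is directly obtained from Eqs.~(\ref{oo3}) and (\ref{eq1}); you have merely spelled out the details, including the well-definedness check that the paper leaves implicit.
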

\begin{proof}
It is directly  obtained from  Eqs. (\ref{oo3}) and (\ref{eq1}).
\end{proof}

\begin{rmk}{\rm
Note that a similar study about averaging operators on Lie conformal algebras
has been given in \cite{Ko}.}
\end{rmk}

\begin{defi}{\rm
Let $R=\mathbb{C}[\partial]V$ be a free $\mathbb{C}[\partial]$-module. Then by the definition of
$\mathcal{T}$ in Proposition \ref{p1}, $\mathcal{T}: \mathbb{C} a_n\rightarrow \mathbb{C} a_n $ if and only if $T(a)=f(a) a$ for any $a\in V$ where $f(a)\in \mathbb{C}$.
If $T: R\rightarrow R$ satisfies this condition, $T$ is called {\bf homogeneous}.}
\end{defi}

Next we study Rota-Baxter operators on a class of
Lie conformal algebras named quadratic Lie conformal algebras (\cite{X1}).

\begin{defi}{\rm
 If $R=\mathbb{C}[\partial]V$ is a Lie conformal algebra as a free
$\mathbb{C}[\partial]$-module and the $\lambda$-bracket is of the following form:
\begin{eqnarray*}
[a_{\lambda} b]=\partial u+\lambda v+ w,~~~~~~\text{$a$, $b\in V$,}
\end{eqnarray*}
where $u$, $v$, $w\in V$, then $R$ is called a {\bf quadratic Lie conformal algebra}.}
\end{defi}

\begin{defi}{\rm
A {\bf Novikov algebra} $(A, \circ)$ is a vector space $A$ with an operation
``$\circ$" satisfying the following axioms: for $a$, $b$, $c\in A$,
\begin{eqnarray*}
&(a\circ b)\circ c=(a\circ c)\circ b,\\
&(a\circ b)\circ c-a\circ (b\circ c)=(b\circ a)\circ c-b\circ(a\circ c).
\end{eqnarray*}}
\end{defi}
\begin{defi}{\rm (\cite{X1})
A {\bf Gel'fand-Dorfman bialgebra} $(A, [\cdot, \cdot], \circ)$ is a vector space $A$ with two algebraic operations
 $[\cdot,\cdot]$ and $\circ$ such that $(A,[\cdot,\cdot])$ forms a Lie algebra, $(A,\circ)$ forms a Novikov algebra and the following compatibility condition holds:
\begin{eqnarray}\label{xx1}
[a\circ b, c]+[a,b]\circ c-a\circ [b,c]-[a\circ c, b]-[a,c]\circ b=0,
\end{eqnarray}
for $a$, $b$, and $c\in A$. If a linear operator $P:A\rightarrow A$ is a Rota-Baxter operator of weight $\alpha$ on $(A,[\cdot,\cdot])$ as well as $(A,\circ)$,
then $P$ is called a {\bf Rota-Baxter operator of weight $\alpha$ } on the Gel'fand-Dorfman bialgebra $(A, [\cdot, \cdot], \circ)$.}
\end{defi}

\begin{thm}{\rm (\cite{X1} or \cite{GD})}
A  Lie conformal algebra $R=\mathbb{C}[\partial]V$ is quadratic if and only if $V$ is a  Gel'fand-Dorfman bialgebra. The correspondence is given as follows.
\begin{eqnarray}
[a_\lambda b]=\partial(b\circ a)+\lambda(a\ast b)+[b,a], ~~~\text{for any $a$. $b\in V$.}
\end{eqnarray}
Here $a\ast b=a\circ b+b\circ a$ and $(V,[\cdot,\cdot],\circ)$ is a Gel'fand-Dorfman bialgebra.
\end{thm}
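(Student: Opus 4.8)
The plan is to verify directly that the stated $\lambda$-bracket satisfies the axioms of a Lie conformal algebra (skew-symmetry and the Jacobi identity), and that the corresponding bracket is quadratic, then conversely to extract the Gel'fand--Dorfman bialgebra structure from a given quadratic Lie conformal algebra. Since the correspondence is claimed to be a bijection between quadratic brackets on $\mathbb{C}[\partial]V$ and Gel'fand--Dorfman bialgebra structures on $V$, I would set up the general form of a quadratic bracket, $[a_\lambda b] = \partial\,\mu_1(a,b) + \lambda\,\mu_2(a,b) + \mu_3(a,b)$ for $\mathbb{C}$-bilinear operations $\mu_1,\mu_2,\mu_3$ on $V$, and translate the conformal axioms into polynomial identities in $\partial,\lambda,\mu$ that these three operations must satisfy. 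Matching coefficients of the monomials in $\partial$, $\lambda$, $\mu$ will pin down $\mu_1,\mu_2,\mu_3$ in terms of a single Novikov product $\circ$ and a Lie bracket $[\cdot,\cdot]$.

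First I would treat the forward direction. Given a Gel'fand--Dorfman bialgebra $(V,[\cdot,\cdot],\circ)$, define $[a_\lambda b]=\partial(b\circ a)+\lambda(a\ast b)+[b,a]$ with $a\ast b=a\circ b+b\circ a$, and extend $\mathbb{C}[\partial]$-sesquilinearly so that the conformal sesquilinearity relations $\partial a_\lambda b=-\lambda a_\lambda b$ and $a_\lambda\partial b=(\partial+\lambda)a_\lambda b$ hold by construction. For skew-symmetry one computes $[b_{-\lambda-\partial}a]$ using the sesquilinearity to move the $-\lambda-\partial$ past $\partial$, and checks that the symmetry of $a\ast b$ together with $b\circ a$ appearing in the $\partial$-slot and $[b,a]=-[a,b]$ reassemble exactly into $-[a_\lambda b]$; this is a short bookkeeping check. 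The substantive part is the Jacobi identity $[a_\lambda[b_\mu c]]=[[a_\lambda b]_{\lambda+\mu}c]+[b_\mu[a_\lambda c]]$: expanding each triple bracket produces a polynomial in $\partial,\lambda,\mu$ whose coefficients are expressions built from $\circ$ and $[\cdot,\cdot]$ applied to $a,b,c$ in various orders. I expect the coefficient of $\partial^2$ (or the highest $\partial$-power) to reduce to the right-commutativity axiom $(x\circ y)\circ z=(x\circ z)\circ y$ of the Novikov algebra; the coefficient of $\partial\lambda$ (or $\partial\mu$) to reduce to the Novikov left-symmetry identity; the purely-$\lambda,\mu$ part to the Jacobi identity of $(V,[\cdot,\cdot])$; and the genuinely mixed terms to the compatibility condition~(\ref{xx1}). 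So the verification amounts to organizing this expansion carefully and identifying each graded piece with one of the three defining axioms.

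For the converse, I would start from an arbitrary quadratic Lie conformal algebra $R=\mathbb{C}[\partial]V$, write $[a_\lambda b]=\partial\,\mu_1(a,b)+\lambda\,\mu_2(a,b)+\mu_3(a,b)$, and impose skew-symmetry first: comparing $[a_\lambda b]$ with $-[b_{-\lambda-\partial}a]$ forces $\mu_2$ to be symmetric, $\mu_3$ to be skew-symmetric, and $\mu_2(a,b)=\mu_1(a,b)+\mu_1(b,a)$. This lets me define $a\circ b:=\mu_1(b,a)$ and $[a,b]:=\mu_3(b,a)$, so that $\mu_2(a,b)=a\ast b$ as required. Then feeding this parametrization into the Jacobi identity and reading off coefficients (exactly the computation from the forward direction, run in reverse) yields precisely the Novikov axioms for $\circ$, the Lie axioms for $[\cdot,\cdot]$, and the compatibility~(\ref{xx1}); hence $(V,[\cdot,\cdot],\circ)$ is a Gel'fand--Dorfman bialgebra. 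The two constructions are manifestly inverse to each other, giving the bijection.

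The main obstacle is purely computational: the Jacobi identity for a quadratic bracket, once fully expanded, is a sizable polynomial identity in three formal variables with many terms involving double and triple compositions of $\circ$ and $[\cdot,\cdot]$, and one must be disciplined about the sesquilinearity substitutions (in particular how $\partial$ acting on an output of $\mu_i$ interacts with the $\partial$ and $\lambda$ already present). The conceptual content is light—each graded component is one known axiom—but isolating the components correctly, especially separating the ``mixed'' terms that give~(\ref{xx1}) from the ones that give the Novikov identities, is where care is needed. Since this theorem is quoted from \cite{X1} (and \cite{GD}), I would present the forward direction in enough detail to exhibit the coefficient-matching and then note that the converse follows by reversing the same computation.
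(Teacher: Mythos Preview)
Your proposal is correct and is exactly the standard direct-verification argument; however, note that the paper does not supply a proof of this theorem at all---it is quoted from \cite{X1} and \cite{GD} and stated without proof. Your coefficient-matching strategy (skew-symmetry forcing $\mu_2=\mu_1+\mu_1^{\mathrm{op}}$ and $\mu_3$ skew, then reading off Novikov, Lie, and the compatibility~(\ref{xx1}) from the graded pieces of the Jacobi identity) is the approach taken in those references, so there is nothing to compare against within this paper.
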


Then we study the Rota-Baxter operators of weight $\alpha$ on quadratic Lie conformal algebras.

\begin{thm}\label{tt1}
Let $R=\mathbb{C}[\partial]V$ be the finite quadratic Lie conformal algebra corresponding
to a Gel'fand-Dorfman bialgebra $(V,[\cdot,\cdot],\circ)$.
If the algebra $(V,\ast)$ has no zero divisors, then any Rota-Baxter operator $T$  of weight $\alpha$ on $R$ is homogenous and $T$ is just the Rota-Baxter operator of weight $\alpha$  on the Gel'fand-Dorfman bialgebra $(V,[\cdot,\cdot],\circ)$.
\end{thm}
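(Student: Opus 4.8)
The plan is to deduce three things about a Rota--Baxter operator $T$ of weight $\alpha$ on $R$: that $T$ does not raise the $\partial$-degree, so $T(V)\subseteq V$ and $T$ is the $\mathbb{C}[\partial]$-linear extension of its restriction $T_0:=T|_V\colon V\to V$; that, once $T=T_0$, the Rota--Baxter identity on $R$ decouples into the weight-$\alpha$ Rota--Baxter identities for the Novikov product $\circ$ and for the Lie bracket $[\cdot,\cdot]$ on $V$; and that the no-zero-divisor hypothesis then forces $T_0$ to be homogeneous. Since $T$ is $\mathbb{C}[\partial]$-linear and the $\lambda$-bracket is sesquilinear in $\partial$, everything is governed by the identity
\begin{equation*}
[T(a)_\lambda T(b)]=T([a_\lambda T(b)])+T([T(a)_\lambda b])+\alpha\,T([a_\lambda b]),\qquad a,b\in V,
\end{equation*}
which I expand throughout by means of $[a_\lambda b]=\partial(b\circ a)+\lambda(a\ast b)+[b,a]$.

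First I show $T(V)\subseteq V$. Write $T(v)=\sum_{m\ge 0}\partial^{m}T_{m}(v)$ with $\mathbb{C}$-linear $T_{m}\colon V\to V$; since $R$ is finite there is a largest $N$ with $T_{N}\neq 0$, and I claim $N=0$. Suppose $N\ge 1$. Using $[(\partial^{k}u)_\lambda(\partial^{l}w)]=(-\lambda)^{k}(\partial+\lambda)^{l}[u_\lambda w]$ one gets
\begin{equation*}
[T(a)_\lambda T(b)]=\sum_{k,l}(-\lambda)^{k}(\partial+\lambda)^{l}\bigl(\partial(T_{l}(b)\circ T_{k}(a))+\lambda(T_{k}(a)\ast T_{l}(b))+[T_{l}(b),T_{k}(a)]\bigr),
\end{equation*}
so every monomial on the left has $\lambda$-degree at most $k+l+1\le 2N+1$, and the coefficient of $\lambda^{2N+1}$ equals $(-1)^{N}\bigl(T_{N}(a)\ast T_{N}(b)\bigr)$. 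By contrast $[a_\lambda T(b)]$, $[T(a)_\lambda b]$ and $[a_\lambda b]$ each have $\lambda$-degree $\le N+1$, and $T$, acting coefficientwise in $\lambda$, does not raise $\lambda$-degrees; hence the right-hand side has $\lambda$-degree $\le N+1<2N+1$. Comparing coefficients of $\lambda^{2N+1}$ yields $T_{N}(a)\ast T_{N}(b)=0$ for all $a,b\in V$; putting $b=a$ and invoking that $(V,\ast)$ has no zero divisors gives $T_{N}(a)=0$ for all $a$, a contradiction. Thus $N=0$.

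Next, with $T=T_0$, every term of the identity lies in $\mathbb{C}[\partial,\lambda]\otimes V$. Expanding with the quadratic-bracket formula and using $T(\partial x)=\partial\,T(x)$, the components in the independent monomials $\partial^{0}\lambda^{0}$, $\partial^{1}\lambda^{0}$ and $\partial^{0}\lambda^{1}$ can be matched separately. The $\partial^{1}\lambda^{0}$-component is
\begin{equation*}
T_{0}(x)\circ T_{0}(y)=T_{0}(T_{0}(x)\circ y)+T_{0}(x\circ T_{0}(y))+\alpha\,T_{0}(x\circ y),
\end{equation*}
i.e. $T_{0}$ is a Rota--Baxter operator of weight $\alpha$ on the Novikov algebra $(V,\circ)$; the $\partial^{0}\lambda^{0}$-component is the Rota--Baxter identity of weight $\alpha$ for the Lie bracket $[\cdot,\cdot]$; and the $\partial^{0}\lambda^{1}$-component is the identity for $\ast=\circ+\circ^{\mathrm{op}}$, which anyway follows from the first. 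Hence $T=T_{0}$ is exactly a Rota--Baxter operator of weight $\alpha$ on the Gel'fand--Dorfman bialgebra $(V,[\cdot,\cdot],\circ)$.

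It remains to see that $T_{0}$ is homogeneous, i.e. diagonal in a suitable basis. Here one uses the no-zero-divisor hypothesis once more: since $V$ is finite-dimensional over the algebraically closed field $\mathbb{C}$, $T_{0}$ has an eigenvector, and substituting eigenvectors of $T_{0}$ into the Rota--Baxter identity for the commutative product $\ast$ shows, because $x\ast y\neq 0$ whenever $x,y\neq 0$, that $\ast$-products of eigenvectors are again eigenvectors of $T_{0}$; from this one deduces that $V$ is spanned by eigenvectors of $T_{0}$, which is precisely the assertion that $T$ is homogeneous (equivalently, that the induced operator $\mathcal{T}$ on $\mathrm{Coeff}(R)$ preserves each line $\mathbb{C}a_n$). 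The crux of the argument is the degree estimate in the second paragraph: once the top $\lambda$-coefficient of the Rota--Baxter identity is correctly isolated as $T_{N}(a)\ast T_{N}(b)$, absence of zero divisors collapses the $\partial$-degree to zero; the remaining steps are formal, apart from the spreading of the eigenvalue property through $(V,\ast)$ in the last step, where finite-dimensionality and algebraic closedness of $\mathbb{C}$ re-enter.
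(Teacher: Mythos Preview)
Your degree argument—writing $T=\sum_{m}\partial^{m}T_{m}$, isolating the $\lambda^{2N+1}$-coefficient of the left side as $(-1)^{N}T_{N}(a)\ast T_{N}(b)$, bounding the $\lambda$-degree of the right side by $N+1$, and using the no-zero-divisor hypothesis to force $N=0$—is correct and is exactly the paper's proof. Once $T=T_{0}$, your matching of the $\partial$-, $\lambda$- and constant components to recover the Rota--Baxter identities for $(V,\circ)$ and $(V,[\cdot,\cdot])$ is also correct; the paper is terser here and simply asserts that $T_{0}$ is a Rota--Baxter operator on the Gel'fand--Dorfman bialgebra.

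The final paragraph, however, is both unnecessary and flawed. In the paper the proof \emph{ends} once $T=T_{0}\colon V\to V$ is shown; that is all ``homogeneous'' amounts to in the paper's own argument (consistent with the usage in the cited paper~[Ko] on averaging operators), whatever one makes of the somewhat ambiguous definition preceding the theorem. Your eigenvector argument attempts to prove the stronger statement that $T_{0}$ is diagonalizable, but it has two gaps. First, if $T_{0}(x)=\mu x$ and $T_{0}(y)=\nu y$, the weight-$\alpha$ Rota--Baxter identity for $\ast$ yields $(\mu+\nu+\alpha)\,T_{0}(x\ast y)=\mu\nu\,(x\ast y)$, which says nothing about $T_{0}(x\ast y)$ when $\mu+\nu+\alpha=0$ (this occurs already for $\alpha=0$, $\mu=0$, $\nu$ arbitrary). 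Second, even granting that $\ast$-products of eigenvectors are eigenvectors, you have invoked no generation property of $(V,\ast)$, so the step from ``one eigenvector exists'' to ``$V$ is spanned by eigenvectors'' is unjustified. Drop that paragraph and your proof coincides with the paper's.
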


\begin{proof}
Since $R$ is free and finite as a $\mathbb{C}[\partial]$-module, $V$ is a finite-dimensional vector space. Set
$$T=\sum_{i=0}^n\partial^iT_i,$$
where $T_i: V\rightarrow V$ $(i=0, \cdots, n)$ are linear maps  and there exists some $a\in V$ such that
$T_n(a)\neq 0$.
Therefore
\begin{eqnarray*}
[T(a)_\lambda T(b)]
&=&[\sum_{i=0}^n\partial^iT_i(a)_\lambda \sum_{j=0}^n\partial^jT_j(b)]\\
&=&\sum_{i,j=0}^n(-\lambda)^i(\lambda+\partial)^j(\partial(T_j(b)\circ T_i(a))
+\lambda(T_j(b)\ast T_i(a))+[T_j(b),T_i(a)]),
\end{eqnarray*}
and
\begin{eqnarray*}
&&T([T(a)_\lambda b]+[a_\lambda T(b)])+\alpha T[a_\lambda b]\\
&&=\sum_{i=0}^n(-\lambda)^i(\partial T(b\circ T_i(a))+\lambda T(T_i(a)\ast b)
+T([b, T_i(a)]))\\
&&+\sum_{j=0}^n(\lambda+\partial)^j(\partial T(T_j(b)\circ a)
+\lambda T(T_j(b)\ast a)+T([T_j(b),a])\\
&&+\alpha(\partial T(b\circ a)+\lambda T(b\ast a)+T([b,a])).
\end{eqnarray*}

If $n\geq 1$, by comparing the coefficients of $\lambda^{2n+1}$,
we get $T_n(b)\ast T_n(a)=0$ for any $a$, $b\in V$. By the assumption,
there exists $a\in V$ such that $T_n(a)\neq 0$. Hence
$T_n(a)\ast T_n(a)=0$, which contradicts with the assumption that $(V,\ast)$ has no zero divisors. Therefore in this case, $T(a)=T_0(a)$ for any $a\in V$.  Hence
$T_0$ is exactly the Rota-Baxter operator $T$ of weight $\alpha$ on the Gel'fand-Dorfman bialgebra $(V,[\cdot,\cdot],\circ )$.
This completes the proof.
\end{proof}

\begin{cor}\label{y1}
Let $T$ be a Rota-Baxter operator of weight $\alpha$  on the a Gel'fand-Dorfman bialgebra $(V,[\cdot,\cdot],\circ )$. Then
$\widetilde{T}: R=\mathbb{C}[\partial]V\rightarrow  \mathbb{C}[\partial]V$ given by $\widetilde{T}(a)=T(a)$ for any $a\in V$  is a Rota-Baxter operator of weight $\alpha$
on $R$.
\end{cor}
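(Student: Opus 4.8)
The plan is to verify Eq.~(\ref{ooo3}) for $\widetilde T$ directly, after reducing to the case of generators. Recall that $R=\mathbb{C}[\partial]V$ is free, that $\widetilde T$ is the $\mathbb{C}[\partial]$-linear extension of $T$ --- hence also $\mathbb{C}[\lambda]$-linear (acting coefficientwise on $R[\lambda]$) and commuting with $\partial$ --- and that the $\lambda$-bracket of $R$ is sesquilinear: $[(\partial a)_\lambda b]=-\lambda[a_\lambda b]$ and $[a_\lambda(\partial b)]=(\partial+\lambda)[a_\lambda b]$. Consequently, if $[\widetilde T(a)_\lambda\widetilde T(b)]=\widetilde T([a_\lambda\widetilde T(b)])+\widetilde T([\widetilde T(a)_\lambda b])+\alpha\,\widetilde T[a_\lambda b]$ holds for all $a,b\in V$, then applying the operator $(-\lambda)^i(\lambda+\partial)^j$ to both sides yields the same identity for $\partial^i a$ and $\partial^j b$, and by $\mathbb{C}$-bilinearity it then holds for all $a,b\in R$. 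So it suffices to treat $a,b\in V$.

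For $a,b\in V$ I would expand both sides using the quadratic bracket $[a_\lambda b]=\partial(b\circ a)+\lambda(a\ast b)+[b,a]$, the equality $\widetilde T|_V=T$, and the $\mathbb{C}[\partial]$-linearity of $\widetilde T$. The left side becomes $\partial\bigl(T(b)\circ T(a)\bigr)+\lambda\bigl(T(a)\ast T(b)\bigr)+[T(b),T(a)]$, and the right side collects into $\partial\bigl(T(T(b)\circ a)+T(b\circ T(a))+\alpha T(b\circ a)\bigr)+\lambda\bigl(T(a\ast T(b))+T(T(a)\ast b)+\alpha T(a\ast b)\bigr)+\bigl(T([T(b),a])+T([b,T(a)])+\alpha T([b,a])\bigr)$. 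Since $V$, $\partial V$ and $\lambda V$ sit independently inside $R[\lambda]$, Eq.~(\ref{ooo3}) on generators is equivalent to three scalar identities obtained by matching the $\partial$-part, the $\lambda$-part, and the constant part.

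The $\partial$-part is exactly the Rota-Baxter relation of weight $\alpha$ for $T$ on the Novikov algebra $(V,\circ)$ evaluated at $(b,a)$, and the constant part is exactly the Rota-Baxter relation of weight $\alpha$ for $T$ on the Lie algebra $(V,[\cdot,\cdot])$ evaluated at $(b,a)$; both hold by the hypothesis that $T$ is a Rota-Baxter operator of weight $\alpha$ on the Gel'fand-Dorfman bialgebra $(V,[\cdot,\cdot],\circ)$. The $\lambda$-part demands that $T(x)\ast T(y)=T(T(x)\ast y+x\ast T(y))+\alpha T(x\ast y)$, i.e.\ that $T$ is Rota-Baxter of weight $\alpha$ for the symmetrized product $x\ast y=x\circ y+y\circ x$; this is not an additional hypothesis but follows by adding the Rota-Baxter relation for $\circ$ at $(x,y)$ to the one at $(y,x)$ and using $u\circ v+v\circ u=u\ast v$. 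The whole argument is essentially the computation in the proof of Theorem~\ref{tt1} specialized to $T=\widetilde T$ (so that $T_0=T$ and $T_i=0$ for $i\ge1$) and read in the reverse direction, so I expect no real obstacle; the only points deserving care are the reduction to generators in the first step and the observation that the $\ast$-relation is a consequence of, rather than independent from, the $\circ$-relation.
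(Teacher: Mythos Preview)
Your argument is correct and is exactly what the paper intends: the one-line proof ``It can be directly obtained from Theorem~\ref{tt1}'' refers not to the statement of that theorem (which carries the extra no-zero-divisors hypothesis) but to the expansion of both sides of Eq.~(\ref{ooo3}) carried out in its proof, specialized to $T=T_0$. You have reproduced that computation in full, including the reduction to generators via sesquilinearity and the observation that the Rota-Baxter identity for $\ast$ is the symmetrization of the one for $\circ$, so your route coincides with the paper's.
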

\begin{proof}
It can be directly obtained from Theorem \ref{tt1}.
\end{proof}
\begin{cor}\label{y2}
Let $T$ be a Rota-Baxter operator $T$ of weight $\alpha$ on a Gel'fand-Dorfman bialgebra $(V,[\cdot,\cdot],\circ )$. Let $R=\mathbb{C}[\partial]V$ be the
corresponding quadratic Lie conformal algebra. Then
$\mathcal{T}: \text{Coeff}(R)\rightarrow \text{Coeff}(R)$ given by
$ \mathcal{T}(a_n)=T(a)_n$ where $a\in V$ and $n\in \mathbb{Z}$ is
a Rota-Baxter operator $T$ of weight $\alpha$.
\end{cor}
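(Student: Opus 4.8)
The plan is to deduce the statement by chaining together Corollary \ref{y1} and Proposition \ref{p1}, which between them already contain all the substance. First I would invoke Corollary \ref{y1}: since $T$ is a Rota-Baxter operator of weight $\alpha$ on the Gel'fand-Dorfman bialgebra $(V,[\cdot,\cdot],\circ)$, the $\mathbb{C}[\partial]$-module homomorphism $\widetilde{T}:R=\mathbb{C}[\partial]V\rightarrow R$ determined by $\widetilde{T}(a)=T(a)$ for $a\in V$ is a Rota-Baxter operator of weight $\alpha$ on the quadratic Lie conformal algebra $R$, i.e. it satisfies Eq.~(\ref{ooo3}). Then I would apply Proposition \ref{p1} to this Rota-Baxter operator $\widetilde{T}$ on $R$: it produces a linear map $\mathcal{T}'$ on $\text{Coeff}(R)$, given on generators by $\mathcal{T}'(b_n)=\widetilde{T}(b)_n$ for $b\in R$ and $n\in\mathbb{Z}$, and asserts that $\mathcal{T}'$ is a Rota-Baxter operator of weight $\alpha$ on the Lie algebra $\text{Coeff}(R)$.

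It then remains only to identify $\mathcal{T}'$ with the operator $\mathcal{T}$ of the statement, which is a priori prescribed only on the generators $a_n$ with $a\in V$. For this I would note that, because $R=\mathbb{C}[\partial]V$, every element of $\text{Coeff}(R)$ is a $\mathbb{C}$-linear combination of elements $(\partial^k a)_n$ with $a\in V$, and using the defining relation $(\partial b)_n=-n\,b_{n-1}$ in $\text{Coeff}(R)$ repeatedly such an element is a linear combination of the $a_m$ with $a\in V$, $m\in\mathbb{Z}$. Moreover $\widetilde{T}$ commutes with $\partial$, being a $\mathbb{C}[\partial]$-module homomorphism, so $\mathcal{T}'((\partial b)_n)=\widetilde{T}(\partial b)_n=(\partial\widetilde{T}(b))_n=-n\,\widetilde{T}(b)_{n-1}=-n\,\mathcal{T}'(b_{n-1})$, which is exactly compatibility with the relation $(\partial b)_n=-n\,b_{n-1}$. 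Hence $\mathcal{T}'$ is well defined, and its restriction to the generators $a_n$ with $a\in V$ sends $a_n\mapsto T(a)_n$, which is the definition of $\mathcal{T}$. Thus $\mathcal{T}=\mathcal{T}'$, and the Rota-Baxter identity of weight $\alpha$ for $\mathcal{T}'$ supplied by Proposition \ref{p1} is precisely the assertion.

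I do not expect a genuine obstacle: the corollary is a formal consequence of Corollary \ref{y1} followed by Proposition \ref{p1}. The only point requiring a line of care is the bookkeeping that the map $\mathcal{T}$, prescribed only on $a_n$ for $a\in V$, coincides with the map $\mathcal{T}'$ that Proposition \ref{p1} builds from $\widetilde{T}$ on all of $\text{Coeff}(R)$; and this is immediate from $\widetilde{T}\partial=\partial\widetilde{T}$ together with the relation $(\partial b)_n=-n\,b_{n-1}$ defining $\text{Coeff}(R)$.
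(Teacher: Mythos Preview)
Your proposal is correct and follows exactly the paper's own approach: the paper's proof simply states that the result ``can be obtained by Corollary \ref{y1} and Proposition \ref{p1}.'' The extra paragraph you added identifying $\mathcal{T}'$ with $\mathcal{T}$ via the relation $(\partial b)_n=-n\,b_{n-1}$ is a welcome clarification that the paper leaves implicit.
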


\begin{proof}
It can be obtained by Corollary \ref{y1} and Proposition \ref{p1}.
\end{proof}


Finally two examples about Rota-Baxter operators of weight $0$ are given.

\begin{ex}{\rm
The Virasoro Lie conformal algebra $\text{Vir}$ is the simplest nontrivial
example of Lie conformal algebras. It is defined by
$$\text{Vir}=\mathbb{C}[\partial]L, ~~[L_\lambda L]=(\partial+2\lambda)L.$$
Coeff$\text{(Vir)}$ is just the Witt algebra.

Now we consider the Rota-Baxter operators on $\text{Vir}$.
In fact, $\text{Vir}$ is a quadratic Lie conformal algebra
corresponding to the Novikov algebra $(V=\mathbb{C}L, \circ)$
where $L\circ L=L$. Obviously, $(V,\ast)$ has no zero divisors. Therefore
by Theorem \ref{tt1}, we only need to consider Rota-Baxter operators on $V$. It is easy to see that
any Rota-Baxter operator on $V$ is of the form
$T(L)=0$. Therefore all Rota-Baxter operators on $\text{Vir}$ are trivial.}
\end{ex}

\begin{ex}{\rm
Let $R=\mathbb{C}[\partial]L\oplus \mathbb{C}[\partial]W$ be a Lie conformal algebra of rank 2 with the $\lambda$-bracket given by
\begin{eqnarray}
[L_\lambda L]=(\partial+2\lambda)L,~~[L_\lambda W]=(\partial+\lambda)W,~~
[W_\lambda W]=0.
\end{eqnarray}
$\text{Coeff}(R)$ is isomorphic to the Heisenberg-Virasoro Lie algebra which is
spanned by $\{L_n, W_n\mid n\in \mathbb{Z}\}$ satisfying
\begin{eqnarray}
[L_m, L_n]=(m-n)L_{m+n},~~~[L_m,W_n]=-nW_{m+n},~~[W_m,W_n]=0.
\end{eqnarray}
Obviously, the Gel'fand-Dorfman bialgebra corresponding to the Heisenberg-Virasoro Lie
conformal algebra does not satisfy the condition in Theorem \ref{tt1}.
By a direct computation, we show that any Rota-Baxter operator on
$R$ is one of the following two forms:
\begin{eqnarray}
T(L)=-b(L+W),~~T(W)=b(L+W),~~~\text{where $b\in \mathbb{C}\backslash\{0\}$;}
\end{eqnarray}
and
\begin{eqnarray}
T(L)=g(\partial)W,~~T(W)=0,~~~\text{where $g(\partial)\in \mathbb{C}[\partial]$.}
\end{eqnarray}
By Proposition \ref{p1}, we obtain two classes of Rota-Baxter operators
on the Heisenberg-Virasoro Lie algebra as follows.
One is
\begin{eqnarray*}
T(L_m)=-b(L_m+W_m),~~T(W_m)=b(L_m+W_m)~~~~\text{where $b\in \mathbb{C}\backslash\{0\}$};
\end{eqnarray*}
another is
\begin{eqnarray*}
T(L_m)=a,~~T(W_m)=0,~~~\text{where $a\in \mathbb{C}\{W_m\mid m\in \mathbb{Z}\}$.}
\end{eqnarray*}

In addition, by Corollary ~\ref{x3}, from the Rota-Baxter operators
on the Heisenberg-Virasoro Lie conformal algebra, we can naturally get the following  two left-symmetric conformal algebras structures on $A=\mathbb{C}[\partial]L\oplus \mathbb{C}[\partial]W$:
one is
\begin{eqnarray*}
L_\lambda L=-b((\partial+2\lambda)L+\lambda W),  ~~~~L_\lambda W=-b(\partial+\lambda) W,\\
W_\lambda L=b((\partial+2\lambda)L+\lambda W),~~~~W_\lambda W=b(\partial+\lambda) W;
\end{eqnarray*}
another is
\begin{eqnarray*}
L_\lambda L=g(-\lambda)\lambda W,  ~~~~L_\lambda W=W_\lambda L=W_\lambda W=0.
\end{eqnarray*}
}

\end{ex}

{\bf Acknowledgments}
{
This work was  supported by the National Natural Science Foundation of China (11425104, 11501515),
the Zhejiang Provincial Natural Science Foundation of China (LQ16A010011) and the Scientific Research Foundation of Zhejiang Agriculture
and Forestry University (2013FR081). This work was carried out during the first
author's stay at Chern Institute of Mathematics, Tianjin, China, from April 10st to
April 24th, 2016, and he would like to thank the CIM for its support and hospitality.}

\end{document}